\tikzstyle{bull}=[circle,draw=black,fill=black!80]
\numberwithin{equation}{section}
\theoremstyle{plain}
\newtheorem{theorem}{Theorem}[section]
\newtheorem{lemma}[theorem]{Lemma}
\newtheorem{proposition}[theorem]{Proposition}
\newtheorem{corollary}[theorem]{Corollary}
\theoremstyle{definition}
\newtheorem{definition}[theorem]{Definition}
\newtheorem{example}[theorem]{Example}
\newtheorem{remark}[theorem]{Remark}
\newcommand{\nc}{{\,\mid\!\sim\,\!\!}}
\newcommand{\Fi}{\mathsf{Fi}}
\newcommand{\Fig}{\mathrm{Fi}}
\newcommand{\Id}{\mathsf{Id}}
\newcommand{\ph}{\varphi}
\newcommand{\p}{\mathcal{P}}
\newcommand{\G}{\mathcal{G}}
\newcommand{\C}{\mathrm{C}}
\newcommand{\Cs}{\mathcal{C}}
\newcommand{\Ll}{\mathcal{L}}
\newcommand{\Var}{Var}
\newcommand{\Fm}{Fm}
\newcommand{\Hom}{\mathrm{Hom}}
\newcommand{\Seq}{\mathrm{Seq}}
\renewcommand{\S}{\mathcal{S}}
\newcommand{\V}{\mathbb{V}}
\newcommand{\vS}{\vdash_\S}
\newcommand{\Th}{\mathsf{Th}}
\newcommand{\vtl}{\rhd}
\newcommand{\GMod}{\mathrm{\textbf{GMod}}}
\newcommand{\Alg}{\mathrm{Alg}}
\newcommand{\K}{\mathrm{K}}
\newcommand{\Con}{\mathrm{Con}}
\newcommand{\DN}{\mathbb{DN}}
\newcommand{\dn}{\mathrm{DN}}
\newcommand{\FiF}{\mathsf{Fi}_{\mathsf{F}}}
\newcommand{\Sl}{\S_\K^{\leq}}
\begin{document}


\title[Selfextensional logics with a $\dn$-term]{Selfextensional logics with a distributive nearlattice term}

\author[L. J. Gonz\'alez]{Luciano J. Gonz\'alez}
\address{Fac. Cs. Exactas y Naturales\\
Universidad Nacional de La Pampa\\Santa Rosa, 6300\\Argentina}
\email{lucianogonzalez@exactas.unlpam.edu.ar}

\thanks{This work was partially supported by Universidad Nacional de La Pampa (Fac. de Cs. Exactas y Naturales) under the grant P.I. 64 M, Res. 432/14 CD; and also by Consejo Nacional de Investigaciones Cient\'ificas y T\'ecnicas (Argentina) under the grant PIP 112-20150-100412CO}

\subjclass{03G27, 03B22,	03G25, 06A12}

\keywords{Selfextensional logics, distributive nearlattices, logics based on partial orders, abstract algebraic logic}

\begin{abstract}
We define when a ternary term $m$ of an algebraic language $\Ll$ is called a \textit{distributive nearlattice term} ($\dn$-term) of a sentential logic $\S$. Distributive nearlattices are ternary algebras generalising Tarski algebras and distributive lattices. We characterise the selfextensional logics with a $\dn$-term through the interpretation of the DN-term in the algebras of the algebraic counterpart of the logics. We prove that the canonical class of algebras (under the point of view of Abstract Algebraic Logic) associated with a selfextensional logic with a $\dn$-term is a variety, and we obtain that the logic is in fact fully selfextensional.
\end{abstract}

\maketitle


\section{Introduction}

This paper is motivated by the results and ideas given in \cite{Ja05} and \cite{Ja06}. In \cite{Ja05}, selfextensional finitary logics with a binary term $\to$ satisfying the deduction-detachment theorem are studied. There, these logics are characterised as logics $\S$ for which there is a class of algebras $\K$ such that the equations defining Hilbert algebras (also called positive implication algebras) hold for $\to$ and the following condition is satisfied:
\begin{equation*}
\begin{split}
\ph_0,\dots,\ph_{n-1}\vdash_\S\ph \iff &(\forall A\in\K)(\forall h\in\Hom(\Fm,A))\\
&h(\ph_0\to(\dots\to(\ph_{n-1}\to\ph)\dots))=1.
\end{split}
\end{equation*}
Similar results are obtained in \cite{Ja06}. There, selfextensional finitary logics with a conjunction term $\wedge$  are characterised as logics $\S$ for which there is a class of algebras $\K$ such that the semilattice equations are satisfied for $\wedge$ and the following condition holds:
\begin{equation*}
\begin{split}
\ph_0,\dots,\ph_{n-1}\vdash_\S\ph \iff &(\forall A\in\K)(\forall h\in\Hom(\Fm,A))\\
&h(\ph_0)\wedge\dots\wedge h(\ph_{n-1})\leq h(\ph).
\end{split}
\end{equation*}

As we can notice from the definitions above, the two kinds of sentential logic are characterised through the behaviour of the interpretation of the implication $\to$ and the conjunction $\wedge$ in the corresponding classes of algebras $\K$.

The notion of \textit{distributive nearlattice} can be presented in two different and equivalent ways. They can be defined as join-semilattices with some extra properties and can be defined as algebras with only one ternary operation satisfying some identities. The two different ways to consider distributive nearlattices are useful for various purposes. After the formal definition of distributive nearlattice (Definition \ref{def: DN-alg}), we will see (Remark \ref{rem:generalisation of DN}) that the variety of distributive nearlattices is a natural generalisation of both the variety of Tarski algebras (also called implication algebras) and the variety of distributive lattices.

The primary aim of this paper is to propose a definition of when a ternary term $m$ of an algebraic language $\Ll$ can be considered a \textit{distributive nearlattice term} (\textit{DN-term} for short) for a sentential logic $\S$. We present some syntactical properties (Section \ref{sec:DN-based logic})  on a sentential logic $\S$ concerning a ternary term $m$ such that, when $m$ is interpreted in every algebra $A$ of the algebraic counterpart of the logic $\S$, the $\{m\}$-reduct $\langle A,m^A\rangle$ will be a distributive nearlattice.

We show that selfextensional logics with a distributive nearlattice term $m$ can be characterised as logics $\S$ for which there exists a class of algebras $\K$ such that the $\{m\}$-reducts of the algebras of $\K$ are distributive nearlattices and the consequence relation of $\S$ can be defined using the partial order induced by the term $m$ on the algebras of $\K$ (Section \ref{sec:DN-based logic}). 

In Section \ref{sec:5}, given a selfextensional logic $\S$ with a $\dn$-term (and with theorems), we consider two sentential logics associated with the canonical class of algebras of $\S$; namely, \textit{the logic preserving degrees of truth} and \textit{the truth-preserving logic}. We show some properties of these logics, and we present some sufficient conditions for these logics to coincide with the original logic $\S$.

\section{Preliminaries}\label{sec 2}

In this section, we introduce some basic concepts and results of Abstract Algebraic Logic (AAL) needed for what follows in the paper and we present the algebraic theory of nearlattices. Our main references for AAL are \cite{Cz01,FoJa09,FoJaPi03,Fo16} and for the theory of nearlattice are \cite{ChaHaKu07,Hi80,ChaKo08}.

\subsection{Abstract Algebraic Logic}\label{subsec:AAL}

Let $\Ll$ be an algebraic language (or algebraic similarity type). We denote by $\Fm(\Ll)$ the absolutely free algebra of type $\Ll$ with a denumerable set $\Var$ of propositional variables as the set of generators. The algebra $\Fm(\Ll)$ is called the \textit{algebra of formulas} of type $\Ll$ and its elements are called \textit{formulas}. When there is no danger of confusion, we write $\Fm$ instead of $\Fm(\Ll)$.

A \textit{sentential logic} (also called \textit{deductive system} in AAL) of type $\Ll$ is a pair $\S=\langle\Fm,\vdash_{\S}\rangle$ where $\Fm$ is the algebra of formulas of type $\Ll$ and $\vdash_{\S}{\subseteq}\,\p(\Fm)\times\Fm$ is a relation satisfying the following properties: for all $\Gamma,\Delta\subseteq\Fm$ and $\ph\in\Fm$ (as usual we write $\Gamma\vdash_\S\ph$ for $(\Gamma,\ph){\in}\vdash_{\S})$,
\begin{enumerate}[(S1)]
	\item if $\ph\in\Gamma$, then $\Gamma\vdash_\S\ph$;
	\item if $\Gamma\vdash_\S\ph$ and $\Gamma\subseteq\Delta$, then $\Delta\vdash_\S\ph$;
	\item if $\Gamma\vdash_\S\ph$ and for every $\gamma\in\Gamma$, $\Delta\vdash_\S\gamma$, then $\Delta\vdash_\S\ph$;
	\item if $\Gamma\vdash_\S\ph$, then there is a finite $\Gamma_0\subseteq\Gamma$ such that $\Gamma_0\vS\ph$;
	\item if $\Gamma\vdash_\S\ph$, then $\sigma[\Gamma]\vS\sigma(\ph)$ for all substitution $\sigma\in \Hom(\Fm,\Fm)$.
\end{enumerate}
The relation $\vS$ is called the \textit{consequence relation} of $\S$. A set $\Gamma\subseteq\Fm$ is called a \textit{theory} of $\S$ ($\S$-\textit{theory}, for short) if is closed under the consequence relation of $\S$, that is, for every formula $\ph\in\Fm$, if $\Gamma\vS\ph$, then $\ph\in\Gamma$. Let us denote by $\Th(\S)$ the collection of all $\S$-theories. It is easy to see that $\Th(\S)$ is an algebraic closure system on $\Fm$ and the closure operator associated with $\Th(\S)$, which is denoted by $\C_\S$, is defined as:
\[
\ph\in\C_\S(\Gamma)\iff \Gamma\vS\ph
\]
for all $\Gamma\cup\{\ph\}\subseteq\Fm$. Moreover, it is clear that $\C_\S$ is finitary.

Let $\S$ be a sentential logic. The \textit{Frege relation} of $\S$, in symbols $\Lambda(\S)$, is the interderivability relation, that is, $(\ph,\psi)\in\Lambda(\S)$ if and only if $\ph\vS\psi$ and $\psi\vS\ph$. The Frege relation of a sentential logic is an equivalence relation but it is not necessarily a congruence on $\Fm$. A sentential logic $\S$ is said to be \textit{selfextensional} (or $\S$ has \textit{the congruence property}) if the Frege relation $\Lambda(\S)$ is a congruence on $\Fm$.

Let $A$ be an algebra of the same similarity type as $\S$. A subset $F\subseteq A$ is said to be an $\S$-\textit{filter} of $A$ if and only if for any $\Gamma\cup\{\ph\}\subseteq\Fm$ and any interpretation $h\in\Hom(\Fm,A)$,
\[
\text{if } \Gamma\vS\ph \text{ and } h[\Gamma]\subseteq F, \text{ then } h(\ph)\in F.
\]
The set of all $\S$-filters on a given algebra $A$ is denoted by $\Fi_\S(A)$; this set is an algebraic closure system. The associated closure operator will be denoted by $\Fig_\S^A$.

Let $\Ll$ be a fixed but arbitrary algebraic language. A \textit{generalized matrix}, \textit{g-matrix} for short, of similarity type $\Ll$ is a pair $\langle A,\Cs\rangle$ where $A$ is an algebra of type $\Ll$ and $\Cs$ is an algebraic closure system on $A$. We denote by $\C$ the closure operator associated with $\Cs$  and we will often identify the g-matrix $\langle A,\Cs\rangle$ with the pair $\langle A,\C\rangle$. Notice that the closure operator $\C$ is finitary, i.e., for all $X\cup\{a\}\subseteq A$, $a\in\C(X)$ implies that there is a finite $X_0\subseteq X$ such that $a\in\C(X_0)$. 

The reader should keep in mind that all logics and g-matrices considered in this paper are finitary and thus some general results of AAL are restricted to this assumptions.

One of the most interesting aspects of g-matrices is that they can be used in a completely natural way both as models of sentential logics and as models of Gentzen systems. This double function of g-matrices allows relating the algebraic theory of sentential logics to the Gentzen systems. We address the interested reader on these topics  to \cite{FoJa09} and \cite{FoJaPi03}.

An important example of g-matrix is given by a sentential logic $\S$. If $\S$ is a sentential logic, then $\langle\Fm,\Th(\S)\rangle$ is a g-matrix.

\begin{definition}
A g-matrix $\langle A,\C\rangle$ is said to be a \textit{g-model} of a sentential logic $\S$ when for all $\Gamma\cup\{\ph\}\subseteq\Fm$, if $\Gamma\vdash_\S\ph$ then $h(\ph)\in\C(h[\Gamma])$ for all $h\in\Hom(\Fm,A)$. Let us denote the class of all g-models of a sentential logic $\S$ by $\GMod(\S)$.
\end{definition}

The logical concept of Frege relation is transferred to the setting of g-matrices. The \textit{Frege relation} of a g-matrix $\langle A,\C\rangle$ is defined by:
\[
(a,b)\in\Lambda_A(\C)\iff \C(a)=\C(b)
\]
for every $a,b\in A$. The \textit{Tarski congruence} of a g-matrix $\langle A,\C\rangle$ is the largest congruence below the Frege relation of the g-matrix. We denote the Tarski congruence of $\langle A,\C\rangle$ by $\widetilde{\Omega}_A(\C)$. A g-matrix is said to be \textit{reduced} when its Tarski congruence is the identity relation. Let us denote by $\GMod^*(\S)$ the class of all reduced g-models of a sentential logic $\S$. 

We can now introduce the class of algebras that is considered in AAL as the natural algebraic counterpart of a sentential logic, see \cite{FoJaPi03,Cz03}. 

\begin{definition}
The \textit{canonical class of algebras} associated with a sentential logic $\S$ (it is also called the \textit{algebraic counterpart} of $\S$) is the class of the algebraic reducts of the reduced g-models of $\S$; it is denoted by $\Alg(\S)$. That is, 
\begin{equation*}
\begin{split}
\Alg(\S):&=\Alg(\GMod^*(\S))\\
&=\{A: \langle A,\C\rangle\in\GMod^*(\S) \ \text{for some finitary closure operator } \C\}.
\end{split}
\end{equation*}
Moreover, another important class of algebras associated with a sentential logic $\S$ is $\mathrm{K}_\S:=\mathbb{V}(\Fm/\widetilde{\Omega}(\S))$, the variety generated by the algebra $\Fm/\widetilde{\Omega}(\S)$. This variety is called \textit{the intrinsic variety of} $\S$.
\end{definition}

\begin{lemma}[{\cite[Proposition 2.26]{FoJa09}}]\label{lem:AlgS<K_S}
Let $\S$ be a sentential logic. Then, the intrinsic variety of $\S$ is the variety generated by the class $\Alg(\S)$ and hence we have $\Alg(\S)\subseteq\V(\Alg(\S))=\mathrm{K}_\S$.
\end{lemma}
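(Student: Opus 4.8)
The plan is to prove the two inclusions $\mathrm{K}_\S\subseteq\V(\Alg(\S))$ and $\V(\Alg(\S))\subseteq\mathrm{K}_\S$; since $\Alg(\S)\subseteq\V(\Alg(\S))$ trivially, the displayed chain then follows. Throughout I would use three standard facts about g-matrices, each proved by routine manipulation of closure operators: the Tarski congruence is the intersection of the Leibniz congruences of the closed sets, hence it is \emph{antitone} in the closure system (a larger family of closed sets gives a smaller Tarski congruence); it transfers along a surjective homomorphism $g\colon B\twoheadrightarrow C$ as $\widetilde{\Omega}_B(g^{-1}\Cs)=g^{-1}[\widetilde{\Omega}_C(\Cs)]$, where $g^{-1}\Cs$ denotes the closure system on $B$ whose closed sets are the $g$-preimages of the $\Cs$-closed sets; and $\widetilde{\Omega}(\S)$ is a fully invariant congruence of $\Fm$, so $\Fm/\widetilde{\Omega}(\S)$ is a free algebra of $\mathrm{K}_\S$ and $\mathrm{K}_\S$ does not depend on how many variables are used to build $\Fm$.

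For $\mathrm{K}_\S\subseteq\V(\Alg(\S))$, since $\mathrm{K}_\S=\V(\Fm/\widetilde{\Omega}(\S))$ it suffices to check that $\Fm/\widetilde{\Omega}(\S)\in\Alg(\S)$; I would do so by showing that $\langle\Fm/\widetilde{\Omega}(\S),\,\Th(\S)/\widetilde{\Omega}(\S)\rangle$ is a reduced g-model of $\S$. Its closed sets, the sets $T/\widetilde{\Omega}(\S)$ for $T\in\Th(\S)$, are well defined because $\widetilde{\Omega}(\S)$ (the Tarski congruence of $\langle\Fm,\Th(\S)\rangle$) is compatible with every $\S$-theory. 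It is a g-model: any $h\in\Hom(\Fm,\Fm/\widetilde{\Omega}(\S))$ factors as $\pi\circ\sigma$, with $\pi$ the canonical projection and $\sigma$ a substitution, so $\Gamma\vdash_\S\ph$ gives $\sigma(\ph)\in\C_\S(\sigma[\Gamma])$ by (S5), and applying $\pi$ puts $h(\ph)$ in the closed set $\C_\S(\sigma[\Gamma])/\widetilde{\Omega}(\S)\supseteq h[\Gamma]$. It is reduced: transferring the Tarski congruence along $\pi$ yields $\widetilde{\Omega}_{\Fm/\widetilde{\Omega}(\S)}(\Th(\S)/\widetilde{\Omega}(\S))=\widetilde{\Omega}(\S)/\widetilde{\Omega}(\S)$, the identity.

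For $\V(\Alg(\S))\subseteq\mathrm{K}_\S$ it is enough, $\mathrm{K}_\S$ being a variety, to prove $\Alg(\S)\subseteq\mathrm{K}_\S$. Let $A\in\Alg(\S)$ be witnessed by a reduced g-model $\langle A,\C\rangle$ with $\C$ finitary. Enlarging the set of variables of $\Fm$ if necessary — which changes neither $\mathrm{K}_\S$ nor the argument — fix a surjective homomorphism $h\colon\Fm\to A$ and form the pullback g-matrix $\langle\Fm,\C'\rangle$ with $\C'(X)=h^{-1}[\C(h[X])]$ (still finitary). Then (i) $\langle\Fm,\C'\rangle$ is a g-model of $\S$: for any substitution $\sigma$, $h\circ\sigma\in\Hom(\Fm,A)$, so $\Gamma\vdash_\S\ph$ gives $h(\sigma(\ph))\in\C(h[\sigma[\Gamma]])$, i.e.\ $\sigma(\ph)\in\C'(\sigma[\Gamma])$; consequently every $\C'$-closed set is an $\S$-theory, so the closure system of $\langle\Fm,\C'\rangle$ is contained in $\Th(\S)$. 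And (ii) since $h$ is onto and $\langle A,\C\rangle$ is reduced, $\widetilde{\Omega}_\Fm(\C')=h^{-1}[\widetilde{\Omega}_A(\C)]=\ker h$. By antitonicity applied to the inclusion in (i), $\widetilde{\Omega}(\S)\subseteq\widetilde{\Omega}_\Fm(\C')=\ker h$, so $h$ factors through $\Fm/\widetilde{\Omega}(\S)$, exhibiting $A$ as a homomorphic image of a free algebra of $\mathrm{K}_\S$; hence $A\in\mathrm{K}_\S$. Combining the two inclusions gives $\V(\Alg(\S))=\mathrm{K}_\S$.

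I expect the real work to be the transfer property for the Tarski congruence of a g-matrix along a surjective homomorphism, invoked both in the reducedness check above and in step (ii): concretely, that pulling a reduced g-model back along a surjection onto $A$ and reducing again recovers exactly $A$. The remaining ingredients — antitonicity of $\widetilde{\Omega}$, full invariance of $\widetilde{\Omega}(\S)$ (so that $\Fm/\widetilde{\Omega}(\S)$ is free in $\mathrm{K}_\S$), and that the auxiliary closure operators are finitary — are bookkeeping.
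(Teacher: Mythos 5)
The paper gives no proof of this lemma: it is imported verbatim from Font--Jansana (Proposition 2.26 of \cite{FoJa09}), so the only meaningful comparison is with the standard argument there, and your reconstruction is correct and follows essentially that route. Both halves are sound: $\K_\S\subseteq\V(\Alg(\S))$ via showing that $\langle\Fm/\widetilde{\Omega}(\S),\Th(\S)/\widetilde{\Omega}(\S)\rangle$ is a reduced g-model (in the g-model check, note that the least closed set of the quotient containing $h[\Gamma]$ is exactly $\C_\S(\sigma[\Gamma])/\widetilde{\Omega}(\S)$, because every $\S$-theory is a union of $\widetilde{\Omega}(\S)$-classes; it is worth saying this explicitly, since membership in \emph{some} closed set containing $h[\Gamma]$ would not suffice), and $\Alg(\S)\subseteq\K_\S$ via pulling a reduced g-model back along a surjection and using antitonicity plus the transfer of the Tarski congruence. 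The one step where the real content is compressed is precisely the clause ``enlarging the set of variables changes neither $\K_\S$ nor the argument'': this requires passing to the natural extension of $\S$ to a formula algebra over more variables and checking that the intrinsic variety, the theories, and the g-model property are unaffected (true for finitary logics, which is the standing assumption of the paper), and it cannot be bypassed by restricting to the image subalgebra $h[\Fm]\leq A$, since the Tarski congruence of a submatrix of a reduced g-matrix need not be the identity. With that caveat made explicit, your proof is a faithful and complete version of the cited result.
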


\begin{definition}\label{def:charac fully selfextensional}
A sentential logic $\S$ is said to be \textit{fully selfextensional} (or \textit{congruential}) if for every $A\in\Alg(\S)$, the Frege relation of the g-matrix $\langle A,\Fi_\S(A)\rangle$ is the identity relation.
\end{definition}

\subsection{Distributive nearlattices}\label{subsec:NL}

Nearlattices and distributive nearlattices were studied by several authors \cite{Hi80,ChaKo08,ChaHaKu07,ArKi11,ChaHa06,ChaKo06,CeCa14,CeCa16,Go17,GoLa17}.

\begin{definition}[\cite{ArKi11}]\label{def: DN-alg}
An algebra $\langle A,m\rangle$ of type (3) is called a \textit{nearlattice} if the following identities hold:
\begin{enumerate}[({P}1)]\label{equa:nearlattice}
		\item $m(x,y,x)=x$,
		\item $m(m(x,y,z),m(y,m(u,x,z),z),w)=m(w,w,m(y,m(x,u,z),z))$.
\end{enumerate}
\end{definition}

\begin{theorem}[\cite{ChaKo08}]\label{theo:characterisation DN}
Let $\langle A,m\rangle$ be an algebra of type (3) and let $\vee$ be the binary operation on $A$ defined by $x\vee y:= m(x,x,y)$. Then, $\langle A,m\rangle$ is a nearlattice if and only if $\langle A,\vee\rangle$ is a join-semilattice where for every $a\in A$, the principal upset $[a)=\{x\in A: a\leq x\}$ is a lattice with respect to the order $\leq$ induced by $\vee$. Moreover, for all $x,y,a\in A$, $m(x,y,a)=(x\vee a)\wedge_a(y\vee a)$ where $\wedge_a$ denotes the meet in $[a)$.
\end{theorem}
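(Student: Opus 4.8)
The plan is to prove the two implications separately; the ``moreover'' clause will fall out of the forward one. Throughout I use freely that a principal upset $[a)$ of a join-semilattice inherits the semilattice order $\le$, that joins computed inside $[a)$ agree with the global $\vee$, and that, when $[a)$ is a lattice, its meet $\wedge_a$ is commutative and associative on $[a)$.

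For ($\Leftarrow$), what determines $m$ is the displayed formula $m(x,y,a)=(x\vee a)\wedge_a(y\vee a)$, so this direction really asserts that if $m$ is given by this formula on a join-semilattice each of whose principal upsets is a lattice, then $\langle A,m\rangle$ is a nearlattice; note the formula is consistent with $x\vee y=m(x,x,y)$ since $(x\vee y)\wedge_y(x\vee y)=x\vee y$. Identity (P1) is immediate: $m(x,y,x)=(x\vee x)\wedge_x(y\vee x)=x\wedge_x(y\vee x)=x$, because $x$ is the least element of $[x)$. For (P2), put $p:=m(x,y,z)=(x\vee z)\wedge_z(y\vee z)$ and $q:=m(u,x,z)=m(x,u,z)=(x\vee z)\wedge_z(u\vee z)$, both lying in $[z)$; then $q\vee z=q$, so $m(y,q,z)=(y\vee z)\wedge_z q=(x\vee z)\wedge_z(y\vee z)\wedge_z(u\vee z)=p\wedge_z(u\vee z)=:r$, whence $r\le p$ and therefore $r\vee w\le p\vee w$ with both terms in $[w)$. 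Hence the left-hand side of (P2) equals $m(p,r,w)=(p\vee w)\wedge_w(r\vee w)=r\vee w$, while the right-hand side equals $m(w,w,m(y,q,z))=m(w,w,r)=w\vee r$, and the two coincide.

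For ($\Rightarrow$), assume (P1) and (P2). Taking $y:=x$ in (P1) gives $m(x,x,x)=x$, i.e.\ $\vee$ is idempotent. The substantive step is to derive from (P2), by a carefully chosen sequence of substitutions, the auxiliary identities: (a) $m(x,y,z)=m(y,x,z)$; (b) $z\vee m(x,y,z)=m(x,y,z)$, $m(x,y,z)\vee(x\vee z)=x\vee z$, and $m(x\vee z,y,z)=m(x,y,z)$; (c) $\vee$ is commutative and associative; and (d) for each fixed $a$, the operation $x\wedge^a y:=m(x,y,a)$ is associative and satisfies the absorptions $x\vee m(x,y,a)=x$ and $m(x,x\vee y,a)=x$ for $x,y\in[a)$. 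Granting (a)--(d): by (c) and idempotence $\langle A,\vee\rangle$ is a join-semilattice; now fix $a$ and note $x\vee a=x$, $y\vee a=y$ for $x,y\in[a)$, so the operation $\wedge^a$ is idempotent (as $m(x,x,a)=x\vee a=x$), commutative by (a), associative by (d), valued in $[a)$ by (b), and satisfies both absorption laws with $\vee$ by (b) and (d); hence $\langle[a),\vee,\wedge^a\rangle$ is a lattice whose meet is $m(\cdot,\cdot,a)$. Finally, for arbitrary $x,y,a$, iterating $m(x\vee z,y,z)=m(x,y,z)$ from (b) together with (a) yields $m(x,y,a)=m(x\vee a,y\vee a,a)=(x\vee a)\wedge^a(y\vee a)=(x\vee a)\wedge_a(y\vee a)$, which is the ``moreover'' assertion.

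The main obstacle is entirely in the forward implication and is purely equational: extracting associativity of $\vee$, and associativity of the fibrewise operation $m(\cdot,\cdot,a)$, from the single five-variable identity (P2). The mechanism I would use is to instantiate (P2) so that its two inner $m$-subterms collapse to joins — rewriting $m(\cdot,\cdot,\cdot)$ as a join whenever the third argument provably sits below the first two, via (P1) and the order identities in (b) — after which (P2) reduces to exactly the semilattice or lattice law being sought; selecting those instantiations is the technical core, and I would follow the substitution scheme of the original proof in \cite{ChaKo08}.
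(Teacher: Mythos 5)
Your (\(\Leftarrow\)) half is correct and complete, and your preliminary remark about how to read it is the right one: as literally stated the converse would be false for an arbitrary \(m\) whose diagonal happens to be a semilattice join, so the direction to prove is that on a join-semilattice all of whose principal upsets are lattices, the operation \(m(x,y,a):=(x\vee a)\wedge_a(y\vee a)\) satisfies (P1)--(P2). Your verification is exactly right: (P1) is immediate, and for (P2) the observation that both inner terms equal \(r=p\wedge_z(u\vee z)\le p\), so that both sides collapse to \(r\vee w\), settles it. Note also that the paper gives no proof of this theorem at all --- it is imported from \cite{ChaKo08} as a preliminary --- so there is no internal argument to compare your route against.

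The genuine gap is the (\(\Rightarrow\)) direction. Everything that carries the weight there --- commutativity of \(m\) in its first two arguments, commutativity and associativity of \(\vee\), associativity of the fibrewise operation \(m(\cdot,\cdot,a)\), the absorption laws, and \(m(x\vee z,y,z)=m(x,y,z)\) --- is collected in your items (a)--(d), but none of it is derived: you assert it follows from the single five-variable identity (P2) ``by a carefully chosen sequence of substitutions'' and defer to the substitution scheme of \cite{ChaKo08}. That deferral is where the entire content of this implication lives, and it points at the wrong source: the two-identity basis (P1)--(P2) used in Definition \ref{def: DN-alg} is that of \cite{ArKi11}, whereas \cite{ChaKo08} proves the semilattice characterisation from a different and longer axiom system, so its substitution scheme does not apply verbatim; reducing (P1)--(P2) to those axioms (equivalently, to your (a)--(d)) is precisely the long equational derivation, found essentially by automated deduction, that makes \cite{ArKi11} nontrivial. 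As written, your text proves one implication and records only a plan --- with no exhibited instantiation of (P2) --- for the other; to stand as a proof it would need either the explicit chain of substitutions establishing (a)--(d) or an explicit appeal to \cite{ArKi11} for that step, mirroring what the paper itself does by citation.
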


Let $\langle A,m\rangle$ be a nearlattice. Notice that the partial order $\leq$ on $A$ is determined by $\vee$, i.e., $x\leq y$ if and only if $y=x\vee y=m(x,x,y)$. Moreover, for every element $a\in A$ we have 
\[
x\wedge_ay=m(x,y,a)
\]
for all $x,y\in[a)$.  It should be noted that the meet $x\wedge y$ exists in $A$ if and only if $\{x,y\}$ has a lower bound in $A$. Thus, the meet of $x$ and $y$ in $[a)$ coincides with their meet in $A$ for all $x,y\in[a)$, i.e., $x\wedge_a y=x\wedge y$; for instance, we  have 
\[
m(a,b,c)=(a\vee c)\wedge_c(b\vee c)=(a\vee c)\wedge(b\vee c),
\]
for all $a,b,c\in A$. This should be kept in mind since we will use it without mention.

\begin{definition}\label{def:distr nearlattice}
A nearlattice $\langle A,m\rangle$ is said to be \textit{distributive} if and only if  it satisfies either of the following two equivalent identities:
\begin{enumerate}
	\item[(P3)] $m(x,m(y,y,z),w)=m(m(x,y,w),m(x,y,w),m(x,z,w))$;
	\item[(P4)] $m(x,x,m(y,z,w))=m(m(x,x,y),m(x,x,z),w)$.
\end{enumerate}
\end{definition}

Let us denote by $\DN$ the variety of distributive nearlattices.

\begin{proposition}[{\cite[Theorem 4]{ChaKo08}}]
A nearlattice $\langle A,m\rangle$ is distributive if and only if for every $a\in A$, the lattice $\langle[a),\wedge_a,\vee\rangle$ is distributive.
\end{proposition}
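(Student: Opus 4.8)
The plan is to read identity (P3) directly as a statement about the lattices $\langle[a),\wedge_a,\vee\rangle$, via the term formula supplied by Theorem \ref{theo:characterisation DN}. Recall that in any nearlattice $y\vee z = m(y,y,z)$ and $m(b,c,a) = (b\vee a)\wedge_a(c\vee a)$; in particular, if $b,c\in[a)$ then $m(b,c,a) = b\wedge_a c$, and since $[a)$ is an up-set closed under $\vee$, the join in $[a)$ of two of its elements coincides with $\vee$ computed in $A$. Throughout I will work with (P3) rather than (P4), because all of its terms are ``based at $w$'' and it therefore translates most transparently.

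For the implication ``$\langle A,m\rangle$ distributive $\Rightarrow$ each $[a)$ distributive'': assume (P3) holds and fix $a\in A$ together with $p,q,r\in[a)$. Substitute $x:=p,\ y:=q,\ z:=r,\ w:=a$ in (P3). The left-hand side becomes $m(p,q\vee r,a) = p\wedge_a(q\vee r)$, and the right-hand side becomes $m(p,q,a)\vee m(p,r,a) = (p\wedge_a q)\vee(p\wedge_a r)$. Hence $p\wedge_a(q\vee r) = (p\wedge_a q)\vee(p\wedge_a r)$: the distributive law holds in $\langle[a),\wedge_a,\vee\rangle$, which is already a lattice by Theorem \ref{theo:characterisation DN}. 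Since a lattice satisfies one of the two distributive identities if and only if it satisfies the other, $\langle[a),\wedge_a,\vee\rangle$ is a distributive lattice.

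For the converse, assume every $\langle[a),\wedge_a,\vee\rangle$ is distributive and take arbitrary $x,y,z,w\in A$. Put $p:=x\vee w$, $q:=y\vee w$, $r:=z\vee w$, which all lie in $[w)$, and note that $(y\vee z)\vee w = q\vee r$. The left-hand side of (P3) simplifies to $m(x,y\vee z,w) = (x\vee w)\wedge_w\big((y\vee z)\vee w\big) = p\wedge_w(q\vee r)$, while the right-hand side simplifies to $m(x,y,w)\vee m(x,z,w) = (p\wedge_w q)\vee(p\wedge_w r)$; these are equal by distributivity of $\langle[w),\wedge_w,\vee\rangle$. So (P3) holds identically in $\langle A,m\rangle$, i.e.\ the nearlattice is distributive.

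I do not expect a genuine obstacle here: the whole argument is essentially an unpacking of (P3) through the formula for $m$. The only points needing care are that in the converse the variables $x,y,z$ need not belong to $[w)$, so one must pass to $x\vee w,\,y\vee w,\,z\vee w$ before invoking distributivity of $[w)$; that joins (and existing meets) are absolute, so no ambiguity arises between computing in $[a)$ and in $A$; and that a single distributive identity already characterises distributive lattices.
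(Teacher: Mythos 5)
Your proof is correct. The paper itself gives no argument for this proposition (it is quoted from \cite[Theorem 4]{ChaKo08}), and your verification is the natural one and surely the intended route: read (P3), i.e. $m(x,y\vee z,w)=m(x,y,w)\vee m(x,z,w)$, through the term formula $m(b,c,a)=(b\vee a)\wedge_a(c\vee a)$, and in the converse replace $x,y,z$ by $x\vee w,\,y\vee w,\,z\vee w\in[w)$ before applying distributivity of $\langle[w),\wedge_w,\vee\rangle$; the points you flag (absoluteness of joins and of existing meets, $(y\vee z)\vee w=(y\vee w)\vee(z\vee w)$) are exactly the ones that need checking, and they all hold.
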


\begin{example}
In Figure \ref{fig:example}, it is shown a distributive nearlattice. For instance, we have 
\[
m(u,w,y)=(u\vee y)\wedge_y(w\vee y)=y \quad \text{and} \quad m(u,w,b)=(u\vee b)\wedge_b(w\vee b)=y.
\]
That is, $m(u,w,y)=u\wedge_y w=u\wedge w=u\wedge_bw=m(u,w,b)$.
\end{example}

\begin{figure}
\begin{tikzpicture}[scale=.5,inner sep=.5mm]
\node[bull] (a) at (-2,0) [label=below:$a$] {};%
\node[bull] (b) at (0,0) [label=below:$b$] {};%
\node[bull] (c) at (2,0) [label=below:$c$] {};%
\node[bull] (x) at (-2,2) [label=left:$x$] {};%
\node[bull] (y) at (0,2) [label=left:$y$] {};%
\node[bull] (z) at (2,2) [label=right:$z$] {};%
\node[bull] (u) at (-2,4) [label=left:$u$] {};%
\node[bull] (v) at (0,4) [label=left:$v$] {};%
\node[bull] (w) at (2,4) [label=right:$w$] {};%
\node[bull] (top) at (0,6) [label=above:$1$] {};%

\draw (a) -- (x) -- (u)--(top)--(v)--(x);%
\draw (b)--(y)--(u);%
\draw (c)--(z)--(w)--(top)--(v)--(z);%
\draw (y)--(w);%
\end{tikzpicture}
\caption{A distributive nearlattice}
\label{fig:example}
\end{figure}
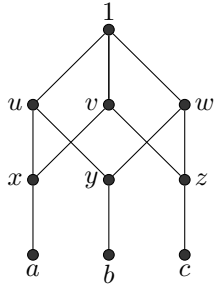

\begin{remark}\label{rem:generalisation of DN}
Recall that a \textit{Tarski algebra} (also called \textit{implication algebra}) \cite{Abb67,Abb67a} can be defined as a binary algebra $\langle A,\to\rangle$ satisfying some identities and, equivalently, it can be defined as a join-semilattice $\langle A,\vee\rangle$ such that for every $a\in A$, the upset $[a)$ is a Boolean algebra with respect to the order induced by $\vee$. Thus, we can noticed that the concept of distributive nearlattice is a natural generalisation of the notion of Tarski algebra.
\end{remark}

\begin{definition}
Let $\langle A,m\rangle$ be a distributive nearlattice. A nonempty subset  $F\subseteq A$ is said to be a \textit{filter} of $A$ if (i) $x\in F$ and $x\leq y$ implies $y\in F$, and (ii) if $x,y\in F$ and $x\wedge y$ exists in $A$, then $x\wedge y\in F$.
\end{definition}

Let us denote by $\Fi(A)$ the collection of all filters of a distributive nearlattice $A$. It is easy to check that for every distributive nearlattice $A$ the intersection of any collection of filters is either a filter or an empty set. So, for every nonempty $X\subseteq A$, there exists the least filter  containing $X$; it is denoted by $\Fig_A(X)$. If $X=\{a_1,\dots,a_n\}$, then we write $\Fig_A(a_1,\dots,a_n)$ instead $\Fig_A(\{a_1,\dots,a_n\})$; moreover, it is easy to check that $\Fig_A(a)=[a)$. There is a useful characterisation of the generated filter $\Fig_A(X)$. Let $X\subseteq A$ be nonempty. Then,
\[
\Fig_A(X)=\{a\in A:  a=b_1\wedge\dots\wedge b_k \text{ for some } b_1,\dots,b_k\in[X)\}.
\]
where $[X)=\{a\in A: a\geq x \text{ for some } x\in X\}$, see \cite{CoHi78}.

\begin{proposition}[\cite{Go17}]\label{prop:charac of filter}
Let $A$ be a nearlattice and $F\subseteq A$ be nonempty. Then, the following conditions are equivalent:
\begin{enumerate}
	\item[\normalfont{(1)}] $F\in\Fi(A)$;
	\item[\normalfont{(2)}] if $a,b\in F$, then $m(a,b,c)\in F$ for all $c\in A$.
\end{enumerate}
\end{proposition}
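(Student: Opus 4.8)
The plan is to prove the two implications directly, using the concrete description of $m$ in a nearlattice supplied by Theorem \ref{theo:characterisation DN}: for all $a,b,c\in A$ one has $m(a,b,c)=(a\vee c)\wedge(b\vee c)$, the meet being taken in $A$ (it exists because $a\vee c$ and $b\vee c$ both lie in $[c)$, which is a lattice). I will also freely use $x\vee y=m(x,x,y)$ and $x\leq y\iff y=x\vee y$.

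For $(1)\Rightarrow(2)$, assume $F\in\Fi(A)$ and fix $a,b\in F$ and $c\in A$. Since $a\leq a\vee c$ and $b\leq b\vee c$, condition (i) of the definition of filter gives $a\vee c\in F$ and $b\vee c\in F$. As $c$ is a common lower bound of $a\vee c$ and $b\vee c$, the meet $(a\vee c)\wedge(b\vee c)$ exists in $A$, so by condition (ii) it belongs to $F$; but this element is exactly $m(a,b,c)$, so $m(a,b,c)\in F$.

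For $(2)\Rightarrow(1)$, assume condition (2). To verify (i), suppose $x\in F$ and $x\leq y$ and apply (2) with $a=b=x$ and $c=y$: then $m(x,x,y)=x\vee y=y\in F$. To verify (ii), suppose $x,y\in F$ and that $x\wedge y$ exists in $A$, and apply (2) with $c=x\wedge y$. Since $x\wedge y\leq x$ and $x\wedge y\leq y$, we get $x\vee(x\wedge y)=x$ and $y\vee(x\wedge y)=y$, whence $m(x,y,x\wedge y)=(x)\wedge_{x\wedge y}(y)=x\wedge y$, using that the meet in the principal upset $[x\wedge y)$ coincides with the meet in $A$. Thus $x\wedge y\in F$, and $F\in\Fi(A)$.

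I do not expect a genuine obstacle: the only points requiring care are the existence of the relevant meets and the fact that $m$ actually computes them, which is precisely the content of Theorem \ref{theo:characterisation DN} and the remarks following it. The real substance of the proposition is the observation that the two filter conditions can be collapsed into the single ternary condition (2), because $m(a,b,c)$ simultaneously encodes ``join with $c$, then meet'', so that choosing $c$ appropriately (equal to $y$, or to $x\wedge y$, or to an arbitrary element) recovers upward closure, closure under existing meets, and conversely forces $m$-closure.
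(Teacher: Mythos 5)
Your proof is correct. The paper itself gives no proof of this proposition (it is cited from \cite{Go17}), and your argument is the natural one: both implications follow directly from the identity $m(a,b,c)=(a\vee c)\wedge(b\vee c)$ of Theorem \ref{theo:characterisation DN}, with the existence of the relevant meets guaranteed by the presence of a common lower bound, exactly as you handle it; note also that, as your argument shows, distributivity is never needed, consistent with the statement being for arbitrary nearlattices.
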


Now we introduce the following definition that will be useful for what follows.

\begin{definition}
Let $\langle A,m\rangle$ be an algebra of type (3). For each integer $n\geq0$, we define inductively, for all $a_0,\dots,a_n,b\in A$, an element $m^{n}(a_0,\dots,a_n,b)$ as follows:
\begin{itemize}
	\item $m^0(a_0,b):=m(a_0,a_0,b)$ and
	\item for $n\geq1$, $m^{n}(a_0,\dots,a_n,b):=m(m^{n-1}(a_0,\dots,a_{n-1},b),a_n,b)$.
\end{itemize}
\end{definition}

In particular, for distributive nearlattice $A$, we get $m^0(a_0,b)=a_0\vee b$ and $m^1(a_0,a_1,b)=m(a_0,a_1,b)$. Indeed,
\begin{multline*}
m^1(a_0,a_1,b)=m(m^0(a_0,b),a_1,b)=m(a_0\vee b,a_1,b)\\
=(a_0\vee b\vee b)\wedge_b(a_1\vee b)=(a_0\vee b)\wedge_b(a_1\vee b)=m(a_0,a_1,b).
\end{multline*}
The proofs of the following two propositions can be found in \cite{Go17}.

\begin{proposition}\label{prop:hom for m^n}
Let $\langle A,m_A\rangle$ and $\langle B,m_B\rangle$ be algebras of type (3) and $h\in\Hom(A,B)$. Then, we have $h(m_A^{n}(a_0,\dots,a_n,b))=m_B^{n}(h(a_0),\dots,h(a_n),h(b))$ for all $a_0,\dots,a_n,b\in A$.
\end{proposition}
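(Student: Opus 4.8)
The plan is to argue by induction on the integer $n\geq 0$, mirroring the inductive definition of $m^{n}$.

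First I would settle the base case $n=0$. Here $m_A^{0}(a_0,b)=m_A(a_0,a_0,b)$, so since $h$ is a homomorphism of algebras of type $(3)$ — hence commutes with the fundamental ternary operation — one immediately gets $h(m_A^{0}(a_0,b))=m_B(h(a_0),h(a_0),h(b))=m_B^{0}(h(a_0),h(b))$, which is the required identity.

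For the inductive step, assuming the claim for $n-1$ (with $n\geq 1$), I would unfold $m_A^{n}(a_0,\dots,a_n,b)=m_A\bigl(m_A^{n-1}(a_0,\dots,a_{n-1},b),a_n,b\bigr)$, push $h$ through the outermost occurrence of $m_A$ using the homomorphism property once more, then apply the induction hypothesis to the inner $m_A^{n-1}$-term, and finally re-fold using the definition of $m_B^{n}$. Symbolically the computation reads
\begin{align*}
h\bigl(m_A^{n}(a_0,\dots,a_n,b)\bigr)
&= m_B\bigl(h(m_A^{n-1}(a_0,\dots,a_{n-1},b)),h(a_n),h(b)\bigr)\\
&= m_B\bigl(m_B^{n-1}(h(a_0),\dots,h(a_{n-1}),h(b)),h(a_n),h(b)\bigr)\\
&= m_B^{n}(h(a_0),\dots,h(a_n),h(b)),
\end{align*}
and the induction is complete.

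I do not expect any genuine obstacle here: this is just the familiar fact that operations defined by composition from the fundamental ones are preserved by homomorphisms. The only point worth a moment's care is the bookkeeping around the slightly different shape of the base clause (two explicit arguments for $m^{0}$ as opposed to the recursive clause for $n\geq 1$), but this causes no difficulty once the induction variable is fixed to be $n$.
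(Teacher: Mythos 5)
Your induction is correct and complete; it is exactly the standard argument showing that a derived operation built by composition from $m$ is preserved by homomorphisms. The paper itself omits the proof and refers to \cite{Go17}, where the same induction on $n$ is used, so your approach coincides with the intended one.
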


\begin{proposition}\label{prop:properties of m^n}
Let $\langle A,m\rangle$ be a distributive nearlattice and $a_0,\dots,a_n,a_{n+1}$, $a,b\in A$. Then:
\begin{enumerate}[{\normalfont (1)}]
	\item $m^{n}(a_0,\dots,a_n,b)=(a_0\vee b)\wedge_b\dots\wedge_b(a_n\vee b)$;
	\item $b\leq m^{n}(a_0,\dots,a_n,b)$;
	\item if $a\leq a_i$ for all $i\in\{0,1,\dots,n\}$, then $a\leq m^{n}(a_0,\dots,a_n,b)$;
	\item $m^{n+1}(a_0,\dots,a_{n+1},b)\leq m^{n}(a_0,\dots,a_n,b)$;
	\item $m^{n}(a_0,\dots,a_n,b)=m^{n}(a_{\sigma(0)},\dots,a_{\sigma(n)},b)$, for every permutation $\sigma$ of $\{0,1,\dots,n\}$;
	\item if $a\in\Fig_A(a_0,\dots,a_n)$, then $a\in\Fig_A(m^{n}(a_0,\dots,a_n,a))$;
	\item $a\in\Fig_A(a_0,\dots,a_n)$ if and only if $a=m^{n}(a_0,\dots,a_n,a)$.
\end{enumerate}
\end{proposition}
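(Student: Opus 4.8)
The plan is to prove part (1) first by induction on $n$, to read off parts (2)--(5) from the resulting explicit formula, and to obtain parts (6) and (7) from the characterisation of finitely generated filters recalled above. For (1), the base case is $m^0(a_0,b)=m(a_0,a_0,b)=a_0\vee b$ by definition of $\vee$. For the inductive step with $n\geq1$, put $p:=m^{n-1}(a_0,\dots,a_{n-1},b)$; by the inductive hypothesis $p=(a_0\vee b)\wedge_b\dots\wedge_b(a_{n-1}\vee b)$, so $p\geq b$ and hence $p\vee b=p$. Using the identity $m(x,y,c)=(x\vee c)\wedge_c(y\vee c)$ of Theorem \ref{theo:characterisation DN},
\[
m^n(a_0,\dots,a_n,b)=m(p,a_n,b)=(p\vee b)\wedge_b(a_n\vee b)=p\wedge_b(a_n\vee b),
\]
which is the claimed expression. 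Since the meet of elements of a principal upset $[b)$ computed in $[b)$ agrees with their meet in $A$, all the iterated meets below may be read inside $A$ and are commutative and associative.

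Parts (2)--(5) are then immediate. For (2), each $a_i\vee b$ lies in $[b)$, hence so does their meet, giving $b\leq m^n(a_0,\dots,a_n,b)$. For (3), if $a\leq a_i$ for all $i$, then $a\leq a_i\vee b$ for all $i$, so $a$ is a lower bound of $\{a_i\vee b:0\leq i\leq n\}$ and therefore lies below their meet $m^n(a_0,\dots,a_n,b)$. For (4), part (1) gives $m^{n+1}(a_0,\dots,a_{n+1},b)=m^n(a_0,\dots,a_n,b)\wedge_b(a_{n+1}\vee b)\leq m^n(a_0,\dots,a_n,b)$. For (5), permuting the $a_i$ merely permutes the factors of the meet in (1).

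The only non-routine point is (6): if $a\in\Fig_A(a_0,\dots,a_n)$, then $a\in\Fig_A(m^n(a_0,\dots,a_n,a))=[m^n(a_0,\dots,a_n,a))$, i.e.\ $m^n(a_0,\dots,a_n,a)\leq a$. By the characterisation of $\Fig_A(X)$ with $X=\{a_0,\dots,a_n\}$, write $a=b_1\wedge\dots\wedge b_k$ with each $b_j\in[X)$, so $b_j\geq a_{i_j}$ for some $i_j\in\{0,\dots,n\}$. The key observation is that, being the meet of the $b_j$, the element $a$ also satisfies $a\leq b_j$, hence $b_j\geq a_{i_j}\vee a$; thus all the relevant elements lie in the distributive lattice $[a)$, and monotonicity of $\wedge_a$ yields
\[
a=b_1\wedge\dots\wedge b_k\ \geq\ (a_{i_1}\vee a)\wedge_a\dots\wedge_a(a_{i_k}\vee a)\ \geq\ (a_0\vee a)\wedge_a\dots\wedge_a(a_n\vee a)=m^n(a_0,\dots,a_n,a),
\]
the last inequality because the right-hand meet ranges over the larger index set $\{0,\dots,n\}$. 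I expect this step --- noticing that the factors $b_j$ automatically dominate $a$, so that the whole computation collapses into the single lattice $[a)$ --- to be the main obstacle.

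Finally, (7) follows by combining the previous parts. If $a\in\Fig_A(a_0,\dots,a_n)$, then $a\leq m^n(a_0,\dots,a_n,a)$ by (2) and $m^n(a_0,\dots,a_n,a)\leq a$ by (6), so $a=m^n(a_0,\dots,a_n,a)$. Conversely, if $a=m^n(a_0,\dots,a_n,a)$, then by (1) $a=(a_0\vee a)\wedge_a\dots\wedge_a(a_n\vee a)$ is a finite meet of elements $a_i\vee a\in[a_i)\subseteq[X)$, whence $a\in\Fig_A(X)=\Fig_A(a_0,\dots,a_n)$ by the characterisation of the generated filter.
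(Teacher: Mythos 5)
Your proof is correct. Note that the paper itself does not prove this proposition --- it defers to the reference \cite{Go17} --- so there is no in-paper argument to compare against; your route (induction giving the explicit meet formula in (1), reading (2)--(5) off it, and using the description of $\Fig_A(X)$ as finite meets of elements of $[X)$ for (6) and (7)) is the natural one, and the only delicate point, the collapse of the computation in (6) into the lattice $[a)$ using $b_j\geq a_{i_j}\vee a$, is handled correctly.
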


\section{Logics with a distributive nearlatice term}\label{sec:DN-based logic}

For a ternary term $m$ of an algebraic language $\Ll$, we will consider the binary term $\vee$ defined by $x\vee y:=m(x,x,y)$. We also define, for every integer $n\geq 0$ and variables $x_0,\dots,x_n,x$, the formula $m^{n}(x_0,\dots,x_n,x)$ as follows:
\begin{itemize}
	\item $m^0(x_0,x):=m(x_0,x_0,x)$
	\item for $n\geq1$, $m^{n}(x_0,\dots,x_n,x):=m(m^{n-1}(x_0,\dots,x_{n-1},x),x_{n},x)$.
\end{itemize}

\begin{definition}\label{def:dn-term}
Let $\S$ be a sentential logic over an algebraic language $\Ll$. A ternary term $m$ of $\Ll$ is said to be a \textit{distributive nearlattice term} ($\dn$-\textit{term}) of $\S$ if and only if $\S$ satisfies the following properties:
\begin{enumerate}[({A}1)]
	\item $\ph\vee\psi\vdash_\S\chi$ if and only if $\ph\vdash_\S\chi$ and $\psi\vdash_\S\chi$;
	\item $m(\ph,\psi,\chi)\vdash_\S\ph\vee\chi$ and $m(\ph,\psi,\chi)\vdash_\S\psi\vee\chi$;
	\item $\ph\vee\chi,\psi\vee\chi\vdash_\S m(\ph,\psi,\chi)$;
	\item if $\ph_0,\dots,\ph_n\vdash_\S\ph$, then $m^{n}(\ph_0,\dots,\ph_n,\ph)\vdash_\S\ph$.
\end{enumerate}
\end{definition}

Property (A1) is known in the literature as the \textit{weak proof by cases property} \cite{WaCi08,CiNo11}. Moreover, (A1) implies that the following properties hold: $\ph\vee\ph\vdash_\S\ph$, $\ph\vdash_\S\ph\vee\psi$, $\ph\vee\psi\vdash_\S\psi\vee\ph$ and $(\ph\vee\psi)\vee\chi\vdash_\S\ph\vee(\psi\vee\chi)$. Then, the binary term $\vee$ is a (weak) \textit{disjunction} for the logic $\S$, and thus $\S$ is a (weak) \textit{disjunctive logic}, see \cite{Cz01,WaCi08,FoJa09,CiNo11}.

\begin{proposition}\label{prop:consequence from (A1)-(A4)}
If $m$ is a $\dn$-term of a sentential logic $\S$, then the following properties hold:
\begin{enumerate}[{\normalfont (1)}]
	\item $m^{n}(\ph_0,\dots,\ph_n,\psi)\vdash_\S\ph_i\vee\psi$, for all $i\in\{0,1,\dots,n\}$;
	\item $\ph_0\vee\psi,\dots,\ph_n\vee\psi\vdash_\S m^{n}(\ph_0,\dots,\ph_n,\psi)$;
	\item  if $m^{n}(\ph_0,\dots,\ph_n,\ph)\vdash_\S\ph$, then $\ph_0,\dots,\ph_n\vdash_\S\ph$;
	\item $\ph\vdash_\S m^{n}(\ph_0,\dots,\ph_n,\ph)$.
\end{enumerate}
\end{proposition}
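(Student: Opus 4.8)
The plan is to derive each of the four properties (1)--(4) from the defining conditions (A1)--(A4) of a $\dn$-term, together with the structural rules (S1)--(S3) and the elementary consequences of (A1) already recorded after Definition \ref{def:dn-term} (namely $\ph\vee\ph\vdash_\S\ph$, $\ph\vdash_\S\ph\vee\psi$, $\ph\vee\psi\vdash_\S\psi\vee\ph$, and the associativity of $\vee$ up to interderivability). Properties (1) and (2) should be proved simultaneously by induction on $n$, since the inductive step for one naturally invokes the inductive hypothesis together with (A2) or (A3); properties (3) and (4) will then follow quickly, (4) from (2) plus $\ph\vdash_\S\ph\vee\psi$, and (3) from (4) by a cut (S3).

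For (1), I would argue by induction on $n$. The base case $n=0$ is $m^0(\ph_0,\psi)=\ph_0\vee\psi\vdash_\S\ph_0\vee\psi$, which is (S1). For the inductive step, recall $m^{n+1}(\ph_0,\dots,\ph_{n+1},\psi)=m(m^n(\ph_0,\dots,\ph_n,\psi),\ph_{n+1},\psi)$. Applying (A2) with $\ph:=m^n(\ph_0,\dots,\ph_n,\psi)$, $\psi:=\ph_{n+1}$, $\chi:=\psi$ gives $m^{n+1}(\ph_0,\dots,\ph_{n+1},\psi)\vdash_\S m^n(\ph_0,\dots,\ph_n,\psi)\vee\psi$ and also $m^{n+1}(\ph_0,\dots,\ph_{n+1},\psi)\vdash_\S\ph_{n+1}\vee\psi$. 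The second of these handles the index $i=n+1$. For $i\le n$, by the inductive hypothesis $m^n(\ph_0,\dots,\ph_n,\psi)\vdash_\S\ph_i\vee\psi$, hence (using $\ph\vdash_\S\ph\vee\psi$ and a cut, or directly) $m^n(\ph_0,\dots,\ph_n,\psi)\vee\psi\vdash_\S(\ph_i\vee\psi)\vee\psi\vdash_\S\ph_i\vee\psi$; composing with the first consequence above via (S3) yields $m^{n+1}(\ph_0,\dots,\ph_{n+1},\psi)\vdash_\S\ph_i\vee\psi$. Here the small technical point is that from $\alpha\vdash_\S\beta$ one gets $\alpha\vee\psi\vdash_\S\beta\vee\psi$; this is not immediate from (A1) alone, but (A1) gives $\alpha\vee\psi\vdash_\S\gamma$ iff $\alpha\vdash_\S\gamma$ and $\psi\vdash_\S\gamma$, so taking $\gamma:=\beta\vee\psi$ and checking $\alpha\vdash_\S\beta\vee\psi$ (from $\alpha\vdash_\S\beta\vdash_\S\beta\vee\psi$) and $\psi\vdash_\S\beta\vee\psi$ does the job. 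I would isolate this monotonicity of $\vee$ as a preliminary observation.

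For (2), again induct on $n$. The base case $n=0$ is $\ph_0\vee\psi\vdash_\S\ph_0\vee\psi$. For the step, assume $\ph_0\vee\psi,\dots,\ph_n\vee\psi\vdash_\S m^n(\ph_0,\dots,\ph_n,\psi)$. By the monotonicity observation this gives $\ph_0\vee\psi,\dots,\ph_n\vee\psi\vdash_\S m^n(\ph_0,\dots,\ph_n,\psi)\vee\psi$ after absorbing a $\psi$; actually more simply, since $m^n(\ldots)\vee\psi$ is what we need as the first argument of $m$, I note $b\le m^n(\ldots)$-type facts translate here as $\ph_i\vee\psi\vdash_\S m^n(\ph_0,\dots,\ph_n,\psi)$ already contains $\psi$ as a ``lower bound'' so $m^n(\ph_0,\dots,\ph_n,\psi)=m^n(\ph_0,\dots,\ph_n,\psi)\vee\psi$ up to interderivability (using $\ph\vee\ph\vdash_\S\ph$ and the analogue of Proposition \ref{prop:properties of m^n}(2)). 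Then apply (A3) with $\ph:=m^n(\ph_0,\dots,\ph_n,\psi)$, $\psi:=\ph_{n+1}$, $\chi:=\psi$: from $m^n(\ph_0,\dots,\ph_n,\psi)\vee\psi$ and $\ph_{n+1}\vee\psi$ we derive $m(m^n(\ph_0,\dots,\ph_n,\psi),\ph_{n+1},\psi)=m^{n+1}(\ph_0,\dots,\ph_{n+1},\psi)$. Chaining with the inductive hypothesis via (S3) gives $\ph_0\vee\psi,\dots,\ph_{n+1}\vee\psi\vdash_\S m^{n+1}(\ph_0,\dots,\ph_{n+1},\psi)$.

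Finally, (4) follows because $\ph\vdash_\S\ph_i\vee\ph$ for each $i$ (this is the instance $\psi:=\ph_i$ of $\ph\vdash_\S\ph\vee\psi$ composed with commutativity, giving $\ph\vdash_\S\ph\vee\ph_i\vdash_\S\ph_i\vee\ph$), so by (2) with the roles arranged as $\ph_i\vee\ph$ and a single cut (S3) we get $\ph\vdash_\S m^n(\ph_0,\dots,\ph_n,\ph)$. And (3) is then immediate: if $m^n(\ph_0,\dots,\ph_n,\ph)\vdash_\S\ph$, combine with (4) $\ph\vdash_\S m^n(\ph_0,\dots,\ph_n,\ph)$ irrelevantly and instead use that (A4) read contrapositively is exactly (3)---more precisely, from $\ph_0,\dots,\ph_n\vdash_\S\ph$ we would not get (3), so the correct route is: assume $m^n(\ph_0,\dots,\ph_n,\ph)\vdash_\S\ph$; by (A4) this does not directly help, so instead note $\ph_i\vdash_\S\ph_i\vee\ph\vdash_\S m^n(\ph_0,\dots,\ph_n,\ph)$ is false in general, hence the actual argument is that $\ph_0,\dots,\ph_n\vdash_\S\ph_i\vee\ph$ trivially (from $\ph_i$), and then (2) gives $\ph_0,\dots,\ph_n\vdash_\S m^n(\ph_0,\dots,\ph_n,\ph)$, and cutting with the hypothesis $m^n(\ph_0,\dots,\ph_n,\ph)\vdash_\S\ph$ via (S3) yields $\ph_0,\dots,\ph_n\vdash_\S\ph$. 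The main obstacle I anticipate is bookkeeping the monotonicity/associativity/idempotency manipulations of $\vee$ cleanly so that the invocations of (A2) and (A3) line up with the recursive shape of $m^{n+1}$; once the preliminary lemma on $\vee$ is in place, the inductions are routine.
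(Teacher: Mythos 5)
Your proposal is correct and follows essentially the same route as the paper's (much terser) proof: (1) and (2) by induction on $n$ using (A1)--(A3), and (3), (4) obtained from (2) together with the elementary $\vee$-consequences of (A1) and cuts. Two cosmetic points: in the step for (2) you only need the one direction $m^{n}(\ph_0,\dots,\ph_n,\psi)\vdash_\S m^{n}(\ph_0,\dots,\ph_n,\psi)\vee\psi$ (the claimed interderivability with $m^{n}(\ph_0,\dots,\ph_n,\psi)$ would implicitly invoke (4) and risk circularity), and your paragraph for (3) wanders through discarded attempts before reaching the correct argument ($\ph_0,\dots,\ph_n\vdash_\S\ph_i\vee\ph$, then (2), then cut with the hypothesis), which is the one to keep.
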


\begin{proof}
Properties (1) and (2) can be proved by induction on $n$ using property (A1), and from properties (A2) and (A3), respectively. Properties (3) and (4) are consequences of (A1) and (2).
\end{proof}

\begin{proposition}
Let $\S$ be a sentential logic. If $m$ and $m'$ are $\dn$-terms of $\S$, then $m(\ph,\psi,\chi)\dashv\vdash_\S m'(\ph,\psi,\chi)$, for all $\ph,\psi,\chi\in\Fm$.
\end{proposition}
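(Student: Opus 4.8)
The claim is that any two $\dn$-terms $m$ and $m'$ of $\S$ define the same formula up to interderivability. The natural strategy is to show separately that $m(\ph,\psi,\chi)\vdash_\S m'(\ph,\psi,\chi)$ and, by the symmetric argument, $m'(\ph,\psi,\chi)\vdash_\S m(\ph,\psi,\chi)$; combining the two gives the interderivability. To prove $m(\ph,\psi,\chi)\vdash_\S m'(\ph,\psi,\chi)$, I would exploit the characterisation of $m'(\ph,\psi,\chi)$ via the $\vee'$ of $m'$ encoded in (A2) and (A3) applied to $m'$: by (A3) for $m'$, it suffices to derive both $\ph\vee'\chi$ and $\psi\vee'\chi$ from $m(\ph,\psi,\chi)$, where $x\vee' y := m'(x,x,y)$.

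The key intermediate step is therefore: for every $\dn$-term, $m(\ph,\psi,\chi)\vdash_\S \ph\vee'\chi$. From (A2) for $m$ we already have $m(\ph,\psi,\chi)\vdash_\S\ph\vee\chi$ and $m(\ph,\psi,\chi)\vdash_\S\psi\vee\chi$, so by cut it is enough to know $\ph\vee\chi\vdash_\S\ph\vee'\chi$, i.e.\ that the two disjunctions are interderivable. Here I would use (A1): applying (A1) for the term $m$ to the goal $\ph\vee\chi\vdash_\S\ph\vee'\chi$ reduces it to showing $\ph\vdash_\S\ph\vee'\chi$ and $\chi\vdash_\S\ph\vee'\chi$, and both of these are instances of the property ``$\ph\vdash_\S\ph\vee\psi$'' noted in the paragraph after Definition~\ref{def:dn-term}, now applied to $\vee'$ (which is a weak disjunction for $\S$ because $m'$ satisfies (A1)). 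By the same reasoning with the roles of $m,m'$ swapped one gets $\ph\vee'\chi\vdash_\S\ph\vee\chi$, so indeed $\ph\vee\chi\dashv\vdash_\S\ph\vee'\chi$ and likewise $\psi\vee\chi\dashv\vdash_\S\psi\vee'\chi$.

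Putting the pieces together: from $m(\ph,\psi,\chi)$ one derives $\ph\vee\chi$ and $\psi\vee\chi$ by (A2) for $m$, hence $\ph\vee'\chi$ and $\psi\vee'\chi$ by the interderivability of the disjunctions, hence $m'(\ph,\psi,\chi)$ by (A3) for $m'$; this gives $m(\ph,\psi,\chi)\vdash_\S m'(\ph,\psi,\chi)$, and the converse is symmetric. I do not expect a serious obstacle here — the argument is a short chain of applications of (A1)--(A3) and cut (S3); the only thing to be careful about is keeping straight which primitive disjunction each invocation of (A1) and of the derived disjunction properties refers to, since (A4) and the $m^n$-machinery are not needed at all for this particular statement.
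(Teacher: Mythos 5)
Your proposal is correct and follows essentially the same route as the paper: both proofs first establish $\ph\vee\chi\dashv\vdash_\S\ph\vee'\chi$ via (A1) (using the derived disjunction properties of $\vee'$), and then chain (A2) for $m$ with (A3) for $m'$ (and symmetrically) to obtain the interderivability of $m(\ph,\psi,\chi)$ and $m'(\ph,\psi,\chi)$. No gaps; (A4) is indeed not needed.
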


\begin{proof}
Let $\ph,\psi,\chi\in\Fm$. Since $\ph\vdash_\S\ph\vee'\psi$ and $\psi\vdash_\S\ph\vee'\psi$, it follows by (A1) that $\ph\vee\psi\vdash_\S\ph\vee'\psi$. Similarly, we have $\ph\vee'\psi\vdash_\S\ph\vee\psi$. Hence $\ph\vee\psi\dashv\vdash_\S\ph\vee'\psi$. Now, from this and by (A2) and (A3), we obtain
\[
m(\ph,\psi,\chi)\dashv\vdash_\S\{\ph\vee\chi,\psi\vee\chi\}\dashv\vdash_\S\{\ph\vee'\chi,\psi\vee'\chi\}\dashv\vdash_\S m'(\ph,\psi,\chi).
\qedhere
\]
\end{proof}

\begin{definition}\label{def:DN-class}
A class of algebras $\K$ of a given similarity type $\Ll$ is called \textit{distributive nearlattice-based} ($\dn$-\textit{based} for short) if there is a ternary term $m$ of $\Ll$ such that the distributive nearlattice equations (P1)-(P3) (Definitions \ref{def: DN-alg} and \ref{def:distr nearlattice}) hold in $\K$. In this case, we will also say that $\K$ is a $\dn$-\textit{class relative to} $m$ and when there is not danger of confusion, we simply say that $\K$ is a $\dn$-class.
\end{definition}

Notice that if $\K$ is a $\dn$-class relative to $m$, then for every algebra $A\in\K$ the $\{m\}$-reduct $\langle A,m^A\rangle$ is a distributive nearlattice. Moreover, we have that the variety $\V(\K)$ generated by a $\dn$-class $\K$ is also a $\dn$-class.

\begin{definition}\label{def:DN-based logic}
A sentential logic $\S$ of type $\Ll$ is said to be \textit{distributive near\-la\-ttice-based} ($\dn$-\textit{based} for short) if and only if there is a ternary term $m$ and a DN-class $\K$ of algebras  of type $\Ll$ such that the following condition holds for every $n\geq0$ and for all formulas $\ph_0,\dots,\ph_n,\ph$:
\begin{equation}\label{equa:def DN-based}
\begin{split}
\ph_0,\dots,\ph_n\vdash_\S\ph\iff&(\forall A\in\K)(\forall h\in\Hom(\Fm,A))\\
&\qquad m^{n}(h(\ph_0),\dots,h(\ph_n),h(\ph))\leq h(\ph).
\end{split}
\end{equation}
We will say that $\S$ is $\dn$-\textit{based relative to} $m$ \textit{and} $\K$.
\end{definition}

It should be noted, by property (5) of Proposition \ref{prop:properties of m^n}, that condition \eqref{equa:def DN-based} is independent of the order in which the formulas $\ph_0,\dots,\ph_n$ are taken.

Let $\S$ be a $\dn$-based logic relative to $m$ and $\K$. It is easy to check that for every formulas $\ph$ and $\psi$, 
\[
\ph\vdash_\S\psi \iff (\forall A\in\K)(\forall h\in\Hom(\Fm,A))(h(\ph) \leq h(\psi)).
\]
Then, we obtain that for all $\ph,\psi\in\Fm$
\begin{equation}\label{equa:aux 2}
\ph\dashv\vdash_\S\psi\iff \K\models\ph\approx\psi\iff\V(\K)\models\ph\approx\psi.
\end{equation}
Noticed that \eqref{equa:aux 2} is independent of the term $m$. 

From \eqref{equa:def DN-based} and by property (2) of Proposition \ref{prop:properties of m^n}, we have
\begin{equation*}
\begin{split}
\ph_0,\dots,\ph_n\vdash_\S\ph\iff 
&(\forall A\in\K)(\forall h\in\Hom(\Fm,A)\\
&\qquad m^{n}(h(\ph_0),\dots,h(\ph_n),h(\ph))= h(\ph)\\
\iff&
\K\models m^{n}(\ph_0,\dots,\ph_n,\ph)\approx\ph\\
\iff&
\V(\K)\models m^{n}(\ph_0,\dots,\ph_n,\ph)\approx\ph.
\end{split}
\end{equation*}
Hence, $\S$ is also $\dn$-based relative to the variety $\V(\K)$ generated by $\K$. Moreover, by \eqref{equa:aux 2}, we can see that the variety to which $\S$ is $\dn$-based is unique. So, let us denote the only variety relative to which $\S$ is $\dn$-based by $\V(\S)$.

\begin{proposition}\label{prop:dn-based to dn-term}
Let $\S$ be a $\dn$-based logic relative to $m$. Then, $m$ is a $\dn$-term of $\S$.
\end{proposition}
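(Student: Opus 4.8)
The goal is to verify that a $\dn$-based logic $\S$ (relative to a ternary term $m$ and a $\dn$-class $\K$) satisfies the four defining properties (A1)--(A4) of a $\dn$-term. The overarching strategy is to translate each of the syntactic conditions (A1)--(A4) into an algebraic statement about the interpretation of $m$ in the algebras of $\K$, using the characterisation \eqref{equa:def DN-based}, and then to discharge that algebraic statement using the fact that each $\{m\}$-reduct $\langle A,m^A\rangle$ with $A\in\K$ is a distributive nearlattice, so that Theorem \ref{theo:characterisation DN} and Proposition \ref{prop:properties of m^n} are available. Since \eqref{equa:def DN-based} reduces $\Gamma\vdash_\S\ph$ (for finite $\Gamma$) to a conjunction of inequalities $m^{n}(h(\ph_0),\dots,h(\ph_n),h(\ph))\le h(\ph)$ over all $A\in\K$ and all $h\in\Hom(\Fm,A)$, every property to be proved becomes an elementary order-theoretic identity in a distributive nearlattice.

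First I would dispatch (A1). Using \eqref{equa:def DN-based} with $n=0$ (and recalling $m^0(h(\ph),h(\chi)) = h(\ph)\vee h(\chi)$ in a distributive nearlattice), the statement $\ph\vee\psi\vdash_\S\chi$ becomes: for all $A\in\K$ and all $h$, $(h(\ph)\vee h(\psi))\vee h(\chi)\le h(\chi)$, i.e.\ $h(\ph)\le h(\chi)$ and $h(\psi)\le h(\chi)$; this is exactly the conjunction of $\ph\vdash_\S\chi$ and $\psi\vdash_\S\chi$, again via \eqref{equa:def DN-based} with $n=0$. For (A2) and (A3), I would again use \eqref{equa:def DN-based} with $n=0$ on the right-hand sides and with $n=1$ (noting $m^1 = m$) on $m(\ph,\psi,\chi)$, reducing (A2) to the inequalities $(a\vee c)\wedge_c(b\vee c)\le a\vee c$ and $\le b\vee c$, and (A3) to: if $a'\le a\vee c$ and $a'\le b\vee c$ then $a'\le (a\vee c)\wedge_c(b\vee c)$ — both of which are just the universal property of the meet $\wedge_c$ in the principal upset $[c)$, valid in any (distributive) nearlattice. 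Here one must be a little careful to unwind $m^1$ correctly, but the displayed computation in the excerpt showing $m^1(a_0,a_1,b)=m(a_0,a_1,b)$ covers this.

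The property requiring the most care is (A4): assuming $\ph_0,\dots,\ph_n\vdash_\S\ph$, I must show $m^{n}(\ph_0,\dots,\ph_n,\ph)\vdash_\S\ph$. By \eqref{equa:def DN-based}, the hypothesis says $m^{n}(h(\ph_0),\dots,h(\ph_n),h(\ph))\le h(\ph)$ for all $A\in\K$ and all $h$. Set $g_i := m^{n}(h(\ph_0),\dots,h(\ph_n),h(\ph))$; I need $m^{n}(g_0,\dots,g_n,h(\ph))\le h(\ph)$, but since the $g_i$ are all equal to a single element $c := m^{n}(h(\ph_0),\dots,h(\ph_n),h(\ph))$, Proposition \ref{prop:properties of m^n}(1) gives $m^{n}(c,\dots,c,h(\ph)) = (c\vee h(\ph))\wedge_{h(\ph)}\dots\wedge_{h(\ph)}(c\vee h(\ph)) = c\vee h(\ph)$, and since $c\le h(\ph)$ by hypothesis this equals $h(\ph)$, hence is $\le h(\ph)$. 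Thus (A4) follows. The main obstacle — really the only subtlety — is being scrupulous about the recursive definition of $m^{n}$ at the syntactic level versus at the semantic level: one must invoke Proposition \ref{prop:hom for m^n} to commute $h$ with $m^{n}$, and then Proposition \ref{prop:properties of m^n}(1) to evaluate $m^{n}$ in the distributive nearlattice, after which each of (A1)--(A4) collapses to a one-line order calculation. I would write the proof property by property in this order, leading with the $n=0$ cases and the remark that $m^0(a,b)=a\vee b$ and $m^1(a,b,c)=m(a,b,c)$ in any distributive nearlattice.
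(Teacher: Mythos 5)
Your proposal is correct and proceeds essentially as the paper's own proof does: you verify (A1)--(A4) one by one by unwinding \eqref{equa:def DN-based} (using that $h$ commutes with $m^{n}$) into elementary order-theoretic facts about the distributive nearlattice reducts of the algebras in $\K$. The only cosmetic differences are that you discharge (A3) via the universal property of the meet $\wedge_c$ (i.e.\ Theorem \ref{theo:characterisation DN}) where the paper computes with (P4), and you spell out (A4) through the repeated-premise instance of \eqref{equa:def DN-based} where the paper simply calls it immediate.
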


\begin{proof}
We have that $\S$ is $\dn$-based relative to $\V(\S)$ and the ternary term $m$. Property (A1) is a consequence of the fact that for every $A\in\V(\S)$, the $\{\vee\}$-reduct $\langle A,\vee^A\rangle$ is a join-semilattice. Property (A2) holds because for every $A\in\V(\S)$ and all $a,b,c\in A$, we have $m^A(a,b,c)=(a\vee c)\wedge(b\vee c)\leq a\vee c,b\vee c$. In order to prove (A3), let $\ph,\psi,\chi\in\Fm$. Let $A\in\V(\S)$ and $h\in\Hom(\Fm,A)$. Assume that $h(\ph)=a$, $h(\psi)=b$ and $h(\chi)=c$. So, we need to show that $m(a\vee c,b\vee c,m(a,b,c))\leq m(a,b,c)$. Now, by condition (P4), we have
\begin{multline*}
m(a\vee c,b\vee c,m(a,b,c))=c\vee m(a,b,m(a,b,c))=m(a,b,m(a,b,c))\\
=(a\vee m(a,b,c))\wedge(b\vee m(a,b,c))=(a\vee c)\wedge(b\vee c)=m(a,b,c).
\end{multline*}
Hence (A3) holds. Lastly, property (A4) is an immediate consequence by \eqref{equa:def DN-based}.
\end{proof}

By the previous proposition and from \eqref{equa:aux 2}, we obtain that if $\S$ is a $\dn$-based logic relative to ternary terms $m$ and $m'$, then for every $A\in\V(\S)$, we have that the ternary operations $m^A$ and $m'^A$ coincide. Hence, we can say simply that a logic $\S$ is $\dn$-based.

\begin{proposition}\label{prop:dn-based-->selfextensional}
Let $\S$ be a $\dn$-based logic. Then $\S$ is selfextensional and $\V(\S)=\K_\S$.
\end{proposition}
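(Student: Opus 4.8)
The plan is to prove the two assertions in turn, using the semantic description of $\vdash_\S$ from \eqref{equa:def DN-based} together with the order-theoretic facts about distributive nearlattices from Section~\ref{subsec:NL}.

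\textbf{Selfextensionality.} First I would observe that, by \eqref{equa:aux 2}, for all formulas $\ph,\psi$ we have $\ph\dashv\vdash_\S\psi$ if and only if $\K\models\ph\approx\psi$ (equivalently $\V(\S)\models\ph\approx\psi$), where $\K$ is any $\dn$-class relative to which $\S$ is $\dn$-based. Thus the Frege relation $\Lambda(\S)$ coincides with the relation $\{(\ph,\psi): \V(\S)\models\ph\approx\psi\}$, i.e.\ with the kernel of the canonical homomorphism $\Fm\to\Fm/\theta$ where $\theta$ is the fully invariant congruence on $\Fm$ determined by the equational theory of $\V(\S)$. The kernel of a homomorphism is a congruence, so $\Lambda(\S)$ is a congruence on $\Fm$, which is exactly selfextensionality. (More elementarily: if $\ph_i\dashv\vdash_\S\psi_i$ for $i=0,1,2$, then $\K\models\ph_i\approx\psi_i$, hence $\K\models m(\ph_0,\ph_1,\ph_2)\approx m(\psi_0,\psi_1,\psi_2)$, hence $m(\ph_0,\ph_1,\ph_2)\dashv\vdash_\S m(\psi_0,\psi_1,\psi_2)$ by \eqref{equa:aux 2}; since $m$ is, up to interderivability, the only basic operation whose compatibility needs checking — $\vee$ and the $m^n$ are defined from $m$ — this gives the congruence property.)

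\textbf{Identifying the intrinsic variety.} For the equality $\V(\S)=\K_\S$, recall $\K_\S=\V(\Fm/\widetilde\Omega(\S))$, the variety generated by the Lindenbaum–Tarski algebra of $\S$. The key point is that, for a selfextensional logic, the Tarski congruence $\widetilde\Omega(\S)$ of the g-matrix $\langle\Fm,\Th(\S)\rangle$ coincides with the Frege relation $\Lambda(\S)$ (the Frege relation is then already the largest congruence below itself). By the previous paragraph, $\Lambda(\S)$ is precisely the congruence $\theta$ cutting out the equational theory of $\V(\S)$, so $\Fm/\widetilde\Omega(\S)=\Fm/\theta$ is (isomorphic to) the free algebra of $\V(\S)$ on $\Var$ — hence it lies in $\V(\S)$ and generates it, giving $\K_\S=\V(\Fm/\widetilde\Omega(\S))=\V(\S)$. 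To be careful I would spell out both inclusions: $\Fm/\theta\in\V(\S)$ gives $\K_\S\subseteq\V(\S)$; and since $\V(\S)$ is a variety containing a free algebra on a denumerable generating set, every finitely generated member of $\V(\S)$ is a quotient of $\Fm/\theta$, whence $\V(\S)\subseteq\V(\Fm/\theta)=\K_\S$.

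\textbf{Main obstacle.} The genuinely nontrivial step is the identification $\widetilde\Omega(\S)=\Lambda(\S)$ and, more precisely, checking that $\Fm/\Lambda(\S)$ really is the free $\V(\S)$-algebra rather than merely a subdirectly-representable member of $\V(\S)$; this is where \eqref{equa:aux 2} does the essential work, since it pins down $\Lambda(\S)$ as an \emph{equational} (fully invariant) congruence and not just an arbitrary one. I would also want to confirm that selfextensionality indeed forces $\widetilde\Omega(\S)$ to equal $\Lambda(\S)$ — this is the standard fact that for selfextensional $\S$ the Frege relation of $\langle\Fm,\Th(\S)\rangle$ is a congruence and is therefore its own largest congruence below it, so I would cite it (e.g.\ from \cite{FoJa09}) rather than reprove it. Everything else is bookkeeping with Lemma~\ref{lem:AlgS<K_S} and the definitions.
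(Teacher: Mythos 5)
Your proposal is correct and follows essentially the same route as the paper: selfextensionality is obtained from \eqref{equa:aux 2}, which identifies $\Lambda(\S)$ with equational equivalence over $\V(\S)$ (hence a congruence), and $\V(\S)=\K_\S$ follows from the standard fact that for a selfextensional logic interderivability coincides with equational validity in $\K_\S$ --- a fact the paper simply invokes and you unpack via the free-algebra description of $\Fm/\widetilde{\Omega}(\S)$. One small caution about your parenthetical ``more elementary'' argument: $m$ is not in general the only basic operation of $\Ll$ (the language may contain further connectives, e.g.\ $\square$ or constants as in the paper's own examples), so compatibility must be checked for every operation of $\Ll$; the same one-line check via \eqref{equa:aux 2} works for each of them, so nothing essential is lost, but the claim as stated is too narrow.
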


\begin{proof}
By definition of the Frege relation and  from \eqref{equa:aux 2}, we have $(\ph,\psi)\in\Lambda(\S)\iff\V(\S)\models\ph\approx\psi$. Hence, we obtain that $\Lambda(\S)$ is a congruence on $\Fm$. Therefore $\S$ is selfextensional. Now, since $\S$ selfextensional, it follows that $\ph\dashv\vdash_\S\psi\iff\K_\S\models\ph\approx\psi$. Then, by \eqref{equa:aux 2} again, we obtain that $\V(\S)\models\ph\approx\psi\iff\ph\dashv\vdash_\S\psi\iff\K_\S\models\ph\approx\psi$. Therefore $\V(\S)=\K_\S$.
\end{proof}

Now we are ready to show one of the main results of this paper.

\begin{theorem}\label{theo:dn-term iff dn-based}
Let $\S$ be a sentential logic. Then, $\S$ is a selfextensional logic with a $\dn$-term $m$ if and only if $\S$ is a $\dn$-based logic relative to $m$.
\end{theorem}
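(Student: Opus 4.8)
The plan is to prove the two implications separately. The right-to-left direction is essentially already done: if $\S$ is $\dn$-based relative to $m$, then Proposition \ref{prop:dn-based to dn-term} gives that $m$ is a $\dn$-term of $\S$, and Proposition \ref{prop:dn-based-->selfextensional} gives that $\S$ is selfextensional. So the real content is the forward direction: assuming $\S$ is selfextensional and $m$ is a $\dn$-term, I must produce a $\dn$-class $\K$ witnessing condition \eqref{equa:def DN-based}.

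For the forward direction, the natural candidate for $\K$ is the intrinsic variety $\K_\S = \V(\Fm/\widetilde{\Omega}(\S))$, or equivalently (since $\S$ is selfextensional, so $\widetilde{\Omega}(\S)=\Lambda(\S)$) the algebra $\Fm/\mathord{\dashv\vdash_\S}$ together with the variety it generates. First I would check that $\K_\S$ is a $\dn$-class relative to $m$, i.e. that the identities (P1)--(P3) hold in $\Fm/\mathord{\dashv\vdash_\S}$; this amounts to checking that the corresponding interderivability statements hold in $\S$, e.g. $m(\ph,\ph,\ph)\dashv\vdash_\S\ph$, the associativity-type identity (P2), and the distributivity identity (P3). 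Each of these should follow from (A1)--(A3) using the fact that $\vee$ is a (weak) disjunction and that by (A2)+(A3) the term $m(\ph,\psi,\chi)$ is interderivable with the pair $\{\ph\vee\chi,\psi\vee\chi\}$; selfextensionality is what lets me pass from interderivability of formulas to equations in the quotient algebra, and generation of the variety preserves identities. The partial order $\leq$ on $\Fm/\mathord{\dashv\vdash_\S}$ induced by $\vee$ then satisfies $[\ph]\leq[\psi]$ iff $\ph\vee\psi\dashv\vdash_\S\psi$ iff (by (A1) and reflexivity) $\ph\vdash_\S\psi$.

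Next I would verify the biconditional \eqref{equa:def DN-based} for $\K=\K_\S$. For the direction ($\Rightarrow$): if $\ph_0,\dots,\ph_n\vdash_\S\ph$, then by (A4) we have $m^n(\ph_0,\dots,\ph_n,\ph)\vdash_\S\ph$, and combined with Proposition \ref{prop:consequence from (A1)-(A4)}(4), $\ph\vdash_\S m^n(\ph_0,\dots,\ph_n,\ph)$, so these are interderivable; hence they are equal in $\Fm/\mathord{\dashv\vdash_\S}$ and thus $\K_\S\models m^n(\ph_0,\dots,\ph_n,\ph)\approx\ph$, which gives $m^n(h(\ph_0),\dots,h(\ph_n),h(\ph)) = h(\ph) \leq h(\ph)$ for every $A\in\K_\S$ and $h$. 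For the direction ($\Leftarrow$): suppose $m^n(h(\ph_0),\dots,h(\ph_n),h(\ph))\leq h(\ph)$ holds in all $A\in\K_\S$ under all $h$; instantiating at the quotient algebra $\Fm/\mathord{\dashv\vdash_\S}\in\K_\S$ with $h$ the canonical projection, and using Proposition \ref{prop:hom for m^n}, we get $[m^n(\ph_0,\dots,\ph_n,\ph)]\leq[\ph]$, i.e. $m^n(\ph_0,\dots,\ph_n,\ph)\vdash_\S\ph$; then Proposition \ref{prop:consequence from (A1)-(A4)}(3) yields $\ph_0,\dots,\ph_n\vdash_\S\ph$.

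I expect the main obstacle to be the bookkeeping in showing that (P1)--(P3) actually hold in $\Fm/\mathord{\dashv\vdash_\S}$ purely from (A1)--(A3) — in particular the complex identity (P2) and the distributive law (P3), where I will want to work through the translation $m(\ph,\psi,\chi)\dashv\vdash_\S\{\ph\vee\chi,\psi\vee\chi\}$ carefully and repeatedly, reducing everything to semilattice manipulations of $\vee$ justified by (A1). A secondary subtlety worth stating explicitly is that condition \eqref{equa:def DN-based} must be checked for the case $n$ with $\ph$ itself appearing as the last coordinate of $m^n$ (not an arbitrary element), so the argument really does go through the equality $m^n(\ph_0,\dots,\ph_n,\ph)\dashv\vdash_\S\ph$ rather than a general $\leq$ statement; Proposition \ref{prop:properties of m^n}(2) guarantees the inequality direction is automatic, so the $\leq$ in \eqref{equa:def DN-based} is equivalent to equality, which is what makes the translation to an identity of $\K_\S$ legitimate.
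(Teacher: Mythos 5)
Your overall architecture is the paper's own (pass to the Lindenbaum--Tarski quotient $\Fm/\Lambda(\S)$, show it is a distributive nearlattice, then verify \eqref{equa:def DN-based} there and transfer to the generated variety), and your right-to-left direction and your verification of the biconditional coincide with the paper's. However, there is one genuine gap: the claim that the distributivity identity (P3) holds in $\Fm/\Lambda(\S)$ ``purely from (A1)--(A3)'' by semilattice manipulations of $\vee$ is false. To see this, let $A$ be the five-element non-distributive lattice $N_5$ (elements $0<a<c<1$ and $b$ incomparable to $a,c$) regarded as a nearlattice via $m^A(x,y,z):=(x\vee z)\wedge(y\vee z)$, and let $\S$ be the logic preserving degrees of truth of $\{A\}$ as in \eqref{equa:logic preserving degrees 1}--\eqref{equa:logic preserving degrees 3}. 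Exactly as in Proposition \ref{prop:properties of Sl}, this logic is selfextensional and satisfies (A1)--(A3); but $\ph\dashv\vdash_\S\psi$ iff $A\models\ph\approx\psi$, so its quotient satisfies precisely the identities valid in $A$, and (P3) fails there: $m^A(c,a\vee b,0)=c\wedge 1=c$ while $m^A(c,a,0)\vee m^A(c,b,0)=(c\wedge a)\vee(c\wedge b)=a$. Hence no amount of bookkeeping with (A1)--(A3) alone can deliver (P3); the distributivity of the quotient genuinely requires (A4).

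The precise point of failure in your plan is the nontrivial half of (P3)/(P4), namely $m(\ph\vee\psi,\ph\vee\gamma,\chi)\vdash_\S\ph\vee m(\psi,\gamma,\chi)$, equivalently $\{\ph\vee\psi\vee\chi,\;\ph\vee\gamma\vee\chi\}\vdash_\S\ph\vee m(\psi,\gamma,\chi)$: this would follow from proof by cases \emph{with side formulas}, but (A1) only gives the weak proof-by-cases property without side formulas, and your translation $m(\ph,\psi,\chi)\dashv\vdash_\S\{\ph\vee\chi,\psi\vee\chi\}$ cannot make up the difference. The paper compensates exactly with (A4): from $\psi,\gamma\vdash_\S\ph\vee m(\psi,\gamma,\chi)$ (which follows from (A1)--(A3)), property (A4) yields $m(\psi,\gamma,\ph\vee m(\psi,\gamma,\chi))\vdash_\S\ph\vee m(\psi,\gamma,\chi)$, and a chain of (A1)--(A3) steps then closes the derivation. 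Since (A4) is available to you, the gap is repairable, but as written your plan for (P3) would not go through. For (P1) and the nearlattice identity (P2) your claim is correct, though the painless route is the paper's: show via (A1)--(A3) that $\vee^*$ is a join-semilattice and that $m^*$ computes relative meets in the principal upsets, and invoke Theorem \ref{theo:characterisation DN} rather than verifying (P2) equationally.
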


\begin{proof}
The implication from right to left is a consequence from Propositions \ref{prop:dn-based to dn-term} and \ref{prop:dn-based-->selfextensional}. Now we assume that $\S$ is selfextensional and $m$ is a $\dn$-term of $\S$. First, since $\S$ is selfextensional, it follows that $\Lambda(\S)$ is a congruence on $\Fm$ and hence, we can consider the quotient algebra $\Fm^*:=\Fm/\Lambda(\S)$. Let us show that $\langle\Fm^*,m^*\rangle$, with $m^*(\overline{\ph} ,\overline{\psi},\overline{\chi}):=\overline{m(\ph,\psi,\chi)}$ ($\overline{\ph}$ denotes the equivalent class of $\ph$ in $\Fm/\Lambda(\S)$), is a distributive nearlattice. By (A1), $\S$ satisfies the following properties: $\ph\vee\ph\vdash_\S\ph$, $\ph\vdash_\S\ph\vee\psi$, $\ph\vee\psi\vdash_\S\psi\vee\ph$ and $\ph\vee(\psi\vee\chi)\dashv\vdash_\S(\ph\vee\psi)\vee\chi$. Thus, it is easy to check that $\langle\Fm^*,\vee^*\rangle$, with $\overline{\ph}\vee^*\overline{\psi}:=m^*(\overline{\ph},\overline{\ph},\overline{\psi})$, is a join-semilattice. Let $\chi\in\Fm$. We prove that $[\overline{\chi})=\{\overline{\ph}\in\Fm^*:\overline{\chi}\leq\overline{\ph}\}=\{\overline{\ph}\in\Fm^*:\chi\vdash_\S\ph\}$ is a distributive lattice. In order to prove that $[\overline{\chi})$ is a lattice, we need only to show that there exists the meet in $[\overline{\chi})$. Let $\overline{\ph},\overline{\psi}\in[\overline{\chi})$. So $\chi\vdash_\S\ph,\psi$. Let us prove that $m^*(\overline{\ph},\overline{\psi},\overline{\chi})$ is the meet of $\overline{\ph}$ and $\overline{\psi}$ in $[\overline{\chi})$. By (A1) and (A2) we have $m(\ph,\psi,\chi)\vdash_\S\ph\vee\chi\vdash_\S\ph$ and $m(\ph,\psi,\chi)\vdash_\S\psi\vee\chi\vdash_\S\psi$. Thus $m^*(\overline{\ph},\overline{\psi},\overline{\chi})\leq\overline{\ph},\overline{\psi}$. Let $\overline{\gamma}\in[\overline{\chi})$ be such that $\overline{\gamma}\leq\overline{\ph},\overline{\psi}$. So $\chi\vdash_\S\gamma$ and $\gamma\vdash_\S\ph,\psi$. Then $\gamma\vdash_\S\ph\vee\chi,\psi\vee\chi$. By (A3) we obtain that $\gamma\vdash_\S m(\ph,\psi,\chi)$, that is, $\overline{\gamma}\leq m^*(\overline{\ph},\overline{\psi},\overline{\chi})$. Hence $m^*(\overline{\ph},\overline{\psi},\overline{\chi})=\overline{\ph}\wedge_{\overline{\chi}}\overline{\psi}$. Then, by Theorem \ref{theo:characterisation DN}, we conclude that $\langle\Fm^*,m^*\rangle$ is a nearlattice. Now we show that condition (P4) holds in $\langle\Fm^*,m^*\rangle$. Let $\ph,\psi,\gamma,\chi\in\Fm$. Since $\langle\Fm^*,m^*\rangle$ is a nearlattice, it follows that $\overline{\ph}\vee^*m^*(\overline{\psi},\overline{\gamma},\overline{\chi})\leq m^*(\overline{\ph}\vee^*\overline{\psi},\overline{\ph}\vee^*\overline{\gamma},\overline{\chi})$. In order to prove the inverse inequality, we need to show that $m(\ph\vee\psi,\ph\vee\gamma,\chi)\vdash_\S\ph\vee m(\psi,\gamma,\chi)$. By (A1), we have $\psi,\gamma\vdash_\S\psi\vee\chi,\gamma\vee\chi$, and from (A3) we obtain that $\psi,\gamma\vdash_\S m(\psi,\gamma,\chi)$. Thus $\psi,\gamma\vdash_\S\ph\vee m(\psi,\gamma,\chi)$. By (A4), it follows that
\begin{equation}\label{equa:aux 3}
m(\psi,\gamma,\ph\vee m(\psi,\gamma,\chi))\vdash_\S\ph\vee m(\psi,\gamma,\chi).
\end{equation} 
By (A1) and (A3), we can deduce $\ph\vee\psi\vee\chi\vdash_\S\ph\vee\psi\vee m(\psi,\gamma,\chi)$ and $\ph\vee\gamma\vee\chi\vdash_\S\ph \vee\gamma\vee m(\psi,\gamma,\chi)$. Then, by (A1)-(A3) and \eqref{equa:aux 3}, we have
\begin{align*}
m(\ph\vee\psi,\ph\vee\gamma,\chi)
&\vdash_\S
\ph\vee\psi\vee\chi, \ \ph\vee\gamma\vee\chi\\ 
&\vdash_\S
\ph\vee\psi\vee m(\psi,\gamma,\chi), \ \ph\vee\gamma\vee m(\psi,\gamma,\chi)\\
&\vdash_\S
\psi\vee\ph\vee m(\psi,\gamma,\chi), \ \gamma\vee\ph\vee m(\psi,\gamma,\chi)\\
&\vdash_\S
m(\psi,\gamma,\ph\vee m(\psi,\gamma,\chi))\\
&\vdash_\S
\ph\vee m(\psi,\gamma,\chi).
\end{align*}
Hence, we have proved that $\langle\Fm^*,m^*\rangle$ is a distributive nearlattice. Finally, we prove that $\S$ is $\dn$-based relative to $\{\Fm^*\}$ and $m$. Let $\ph_0,\dots,\ph_n,\ph\in\Fm$. From property (A4), (3) of Proposition \ref{prop:consequence from (A1)-(A4)}, (2) of Proposition \ref{prop:properties of m^n} and since $\S$ is selfextensional, it follows that
\begin{multline*}
\ph_0,\dots,\ph_n\vdash_\S\ph
\iff 
m^{n}(\ph_0,\dots,\ph_n,\ph)\vdash_\S\ph\\
\iff
\overline{m^n(\ph_0,\dots,\ph_n,\ph)}\leq\overline{\ph}
\iff
\overline{m^n(\ph_0,\dots,\ph_n,\ph)}=\overline{\ph}\\
\iff
\Fm^*\models m^{n}(\ph_0,\dots,\ph_n,\ph)\approx\ph\\
\iff
(\forall h\in\Hom(\Fm,\Fm^*))(m^{n}(h\ph_0,\dots,h\ph_n,h\ph)=h\ph)\\
\iff
(\forall h\in\Hom(\Fm,\Fm^*))(m^{n}(h\ph_0,\dots,h\ph_n,h\ph)\leq h\ph).
\end{multline*}
This completes the proof.
\end{proof}

Our next aim is to prove that every selfextensional logic $\S$ with a $\dn$-term is  fully selfextensional and the class $\Alg(\S)$ is a variety. Notice, by the previous theorem and Proposition \ref{prop:dn-based-->selfextensional}, that for every selfextensional logic $\S$ with a $\dn$-term $m$ the $m$-reducts of the algebras of its intrinsic variety $\K_\S$ are distributive nearlattices and $\S$ is $\dn$-based relative to $\K_\S$.

\begin{proposition}\label{prop:Fi_S(A)=Fi(A)}
Let $\S$ be a $\dn$-based logic relative to $m$. Then, for every algebra $A\in\K_\S$, the nonempty $\S$-filters of $A$ are exactly the filters of the $\{m\}$-reduct distributive nearlattice $\langle A,m^A\rangle$, i.e., $\Fi_\S(A)\setminus\{\emptyset\}=\Fi(A)$.
\end{proposition}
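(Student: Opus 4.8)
The plan is to prove the two inclusions $\Fi_\S(A)\setminus\{\emptyset\}\subseteq\Fi(A)$ and $\Fi(A)\subseteq\Fi_\S(A)\setminus\{\emptyset\}$ separately, using Proposition \ref{prop:charac of filter} as the bridge: that result says $F\in\Fi(A)$ (for $F$ nonempty) if and only if $a,b\in F$ implies $m^A(a,b,c)\in F$ for every $c\in A$. So the whole argument reduces to comparing $\S$-filterhood with the single closure condition ``closed under $m^A(\cdot,\cdot,c)$''.

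For the inclusion $\Fi_\S(A)\setminus\{\emptyset\}\subseteq\Fi(A)$, let $F$ be a nonempty $\S$-filter of $A$ and take $a,b\in F$ and $c\in A$. I want $m^A(a,b,c)\in F$. The key is to find a derivation in $\S$ that transports $a,b$ to $m^A(a,b,c)$. Here $c$ is an arbitrary element of $A$, not in the image of a fixed pair of formulas, but that is handled as usual: pick $h\in\Hom(\Fm,A)$ with $h(\ph)=a$, $h(\psi)=b$, $h(\chi)=c$ (possible since $\Fm$ is absolutely free on denumerably many variables). By (A2) and (A1), $m(\ph,\psi,\chi)\vdash_\S\ph\vee\chi$ and $m(\ph,\psi,\chi)\vdash_\S\psi\vee\chi$. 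More to the point, I use the characterisation \eqref{equa:aux 2}/\eqref{equa:def DN-based}: actually the cleanest route is to observe, via Theorem \ref{theo:dn-term iff dn-based}, that $\S$ is $\dn$-based relative to $\K_\S$ and $m$, and then to read off from the definition of $\dn$-based logic that $\ph,\psi\vdash_\S m(\ph,\psi,\chi)$ fails in general (indeed $c$ is a new variable), so instead I argue: by (A1), $\ph\vdash_\S\ph\vee\chi$ and $\ph\vee\chi$... hmm, this needs $\psi$ too. The correct derivation is $\ph\vee\chi,\psi\vee\chi\vdash_\S m(\ph,\psi,\chi)$ from (A3), combined with $\ph\vdash_\S\ph\vee\chi$ and $\psi\vdash_\S\psi\vee\chi$ from (A1); chaining these with (S3) gives $\ph,\psi\vdash_\S m(\ph,\psi,\chi)$. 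Applying this to $h$ with $h[\{\ph,\psi\}]=\{a,b\}\subseteq F$ and using that $F$ is an $\S$-filter yields $h(m(\ph,\psi,\chi))=m^A(a,b,c)\in F$. By Proposition \ref{prop:charac of filter}, $F\in\Fi(A)$.

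For the reverse inclusion, let $F\in\Fi(A)$; in particular $F\neq\emptyset$, so I must show $F\in\Fi_\S(A)$. Suppose $\Gamma\cup\{\ph\}\subseteq\Fm$, $\Gamma\vdash_\S\ph$, and $h\in\Hom(\Fm,A)$ with $h[\Gamma]\subseteq F$; I must show $h(\ph)\in F$. By finitarity (S4) I may assume $\Gamma=\{\ph_0,\dots,\ph_n\}$ is finite, so $\ph_0,\dots,\ph_n\vdash_\S\ph$. Now I invoke the $\dn$-based characterisation \eqref{equa:def DN-based} (legitimate since $\S$ is $\dn$-based relative to $\K_\S\ni A$): it gives $m^{n}(h(\ph_0),\dots,h(\ph_n),h(\ph))= h(\ph)$ in $A$ (using property (2) of Proposition \ref{prop:properties of m^n} to upgrade $\leq$ to $=$, or directly $m^n(\cdots)\leq h(\ph)$ together with $h(\ph)\leq m^n(\cdots)$). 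Write $a_i=h(\ph_i)\in F$ and $b=h(\ph)$. By the definition of $m^n$ together with Proposition \ref{prop:charac of filter} — i.e. the fact, provable by a short induction on $n$, that a filter closed under $m^A(\cdot,\cdot,c)$ is closed under $m^{A,n}(\cdot,\dots,\cdot,c)$ as well (each step is one application of $m^A(-,-,c)$ with the previous value and $a_n$, both in $F$ once $a_0,\dots,a_n\in F$; note $m^{A,0}(a_0,c)=a_0\vee c\geq a_0\in F$ is in $F$ by upward closure) — we get $m^{A,n}(a_0,\dots,a_n,b)\in F$. Since this element equals $b=h(\ph)$, we conclude $h(\ph)\in F$, so $F$ is an $\S$-filter.

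The main obstacle is the bookkeeping in the second inclusion: making precise that ``$F$ closed under $m^A(\cdot,\cdot,c)$ for all $c$'' propagates to ``$F$ closed under $m^{A,n}(\cdot,\dots,\cdot,c)$ for all $c$'', which is the induction sketched above but must be stated carefully because the base case $m^{A,0}(a_0,c)=a_0\vee^A c$ needs upward closure of $F$ (from clause (i) of the definition of filter) rather than the $m$-closure; after the base case every inductive step is a single instance of Proposition \ref{prop:charac of filter}'s condition. A secondary subtlety is ensuring the substitution $h$ realising a prescribed triple $(a,b,c)$ or tuple $(a_0,\dots,a_n,b)$ of elements of $A$ exists and that we are free to reuse it — this is routine since $\Fm$ is free and the $\ph_i,\ph$ are finitely many. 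Everything else is a direct translation between the two filter notions via the definition of $\dn$-based logic and the already-established Theorem \ref{theo:dn-term iff dn-based}.
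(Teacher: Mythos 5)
Your proof is correct and follows essentially the same route as the paper's: one direction derives $\ph,\psi\vdash_\S m(\ph,\psi,\chi)$ from (A1) and (A3) and evaluates it via a homomorphism realising $(a,b,c)$, and the other applies the $\dn$-based condition \eqref{equa:def DN-based} over $\K_\S$ together with Proposition \ref{prop:charac of filter} to get $m^{n}(h\ph_0,\dots,h\ph_n,h\ph)\in F$ and hence $h\ph\in F$. The explicit induction you give for closure of $F$ under $m^{n}(\cdot,\dots,\cdot,c)$ is exactly the detail the paper leaves implicit in its citation of Proposition \ref{prop:charac of filter}, so the two arguments coincide in substance.
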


\begin{proof}
Let $A\in\K_\S$. Let $F\in\Fi(A)$. Let $\ph_0,\dots,\ph_n,\ph\in\Fm$ be such that $\ph_0,\dots,\ph_n\vdash_\S\ph$ and let $h\in\Hom(\Fm,A)$ be such that $h(\ph_i)\in F$ for all $i=0,1,\dots,n$. By \eqref{equa:def DN-based}, we have $m^{n}(h(\ph_0),\dots,h(\ph_n),h(\ph))\leq h(\ph)$. Since $F$ is a filter of the nearlattice $A$ and $h(\ph_0),\dots,h(\ph_n)\in F$, it follows by Proposition \ref{prop:charac of filter} that $m^{n}(h(\ph_0),\dots,h(\ph_n),h(\ph))\in F$. Then $h(\ph)\in F$. Hence $F\in\Fi_\S(A)$. Conversely, let now $F\in\Fi_\S(A)$ be nonempty.  Let $a,b\in F$ and $c\in A$. By (A1) and (A3) we have, for variables $x$, $y$ and $z$, that $x,y\vdash_\S \{x\vee z,y\vee z\}\vdash_\S m(x,y,z)$. By taking $h\in\Hom(\Fm,A)$ such that $h(x)=a$, $h(y)=b$ and $h(z)=c$, we obtain that $h(x),h(y)\in F$ and hence $h(m(x,y,z))\in F$, i.e., $m(a,b,c)\in F$. Therefore, $F\in\Fi(A)$.
\end{proof}

\begin{theorem}\label{theo:AlgS variety}
Let $\S$ be a $\dn$-based logic. Then:
\begin{enumerate}[{\normalfont (1)}]
	\item $\Alg(\S)=\K_\S$;
	\item $\Alg(\S)$ is  a variety;
	\item $\S$ is $\dn$-based relative to $\Alg(\S)$.
\end{enumerate}
\end{theorem}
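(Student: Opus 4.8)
The plan is to prove the three claims in the order $(1)$, then $(2)$, then $(3)$, since $(2)$ and $(3)$ will follow readily once $(1)$ is established. Throughout, I may assume that $\S$ is selfextensional with a $\dn$-term $m$ (by Theorem~\ref{theo:dn-term iff dn-based}), that $\V(\S)=\K_\S$ (Proposition~\ref{prop:dn-based-->selfextensional}), that $\S$ is $\dn$-based relative to $\K_\S$ (the remark preceding Proposition~\ref{prop:Fi_S(A)=Fi(A)}), and that on every $A\in\K_\S$ the nonempty $\S$-filters are exactly the nearlattice filters of $\langle A,m^A\rangle$ (Proposition~\ref{prop:Fi_S(A)=Fi(A)}).

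For $(1)$, I would first show $\Alg(\S)\subseteq\K_\S$, which is immediate from Lemma~\ref{lem:AlgS<K_S}. The substantial inclusion is $\K_\S\subseteq\Alg(\S)$: given $A\in\K_\S$, I must exhibit a finitary closure operator $\C$ such that $\langle A,\C\rangle$ is a reduced g-model of $\S$. The natural candidate is $\C=\Fig_\S^A$, the closure operator of the $\S$-filter closure system on $A$; this is finitary since $\S$ is finitary, and $\langle A,\Fi_\S(A)\rangle$ is automatically a g-model of $\S$. So the crux is to prove that this g-matrix is \emph{reduced}, i.e. that its Tarski congruence $\widetilde{\Omega}_A(\Fi_\S(A))$ is the identity. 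Since the Tarski congruence is below the Frege relation $\Lambda_A(\Fi_\S(A))$, it suffices to show the Frege relation itself is trivial: if $\Fig_\S^A(a)=\Fig_\S^A(b)$ then $a=b$. By Proposition~\ref{prop:Fi_S(A)=Fi(A)}, the nonempty $\S$-filter generated by $a$ is the nearlattice filter $\Fig_A(a)=[a)$, and likewise for $b$; hence $[a)=[b)$, which forces $a=b$ because $\leq$ is a partial order. This shows $\langle A,\Fi_\S(A)\rangle\in\GMod^*(\S)$, so $A\in\Alg(\S)$, giving $(1)$.

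Claim $(2)$ is then immediate: $\Alg(\S)=\K_\S=\V(\S)$ is a variety by its very definition as an intrinsic variety (and by Proposition~\ref{prop:dn-based-->selfextensional}). Claim $(3)$ is likewise immediate from $(1)$ together with the already-noted fact that $\S$ is $\dn$-based relative to $\K_\S$: replacing $\K_\S$ by the equal class $\Alg(\S)$ in condition~\eqref{equa:def DN-based} shows $\S$ is $\dn$-based relative to $\Alg(\S)$. As a by-product one also gets that $\S$ is fully selfextensional, since the argument in $(1)$ shows precisely that the Frege relation of $\langle A,\Fi_\S(A)\rangle$ is the identity for every $A\in\Alg(\S)$.

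The only genuinely non-routine step is the reducedness argument in $(1)$, and even there the work has essentially been front-loaded into Proposition~\ref{prop:Fi_S(A)=Fi(A)} (identifying $\S$-filters with nearlattice filters) and into the elementary fact $\Fig_A(a)=[a)$ recorded before Proposition~\ref{prop:charac of filter}. One small point to be careful about: the identification $\Fi_\S(A)\setminus\{\emptyset\}=\Fi(A)$ excludes the empty set, so when computing $\Fig_\S^A(a)$ one should note that this set is nonempty (it contains $a$) and therefore coincides with $\Fig_A(a)$; no difficulty arises. With these pieces in place the proof is short.
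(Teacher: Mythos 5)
Your proof is correct and follows essentially the same route as the paper: the inclusion $\Alg(\S)\subseteq\K_\S$ via Lemma~\ref{lem:AlgS<K_S}, and the converse by showing $\langle A,\Fi_\S(A)\rangle$ is a reduced g-model using Proposition~\ref{prop:Fi_S(A)=Fi(A)}, with (2) and (3) then immediate. Your explicit verification that the Frege relation is trivial (via $\Fig_\S^A(a)=[a)$ and antisymmetry of $\leq$) just spells out what the paper leaves as "easily deduced", and it is accurate.
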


\begin{proof}
(1) We know by Lemma \ref{lem:AlgS<K_S} that $\Alg(\S)\subseteq\K_\S$. Let $A\in\K_\S$. From Proposition \ref{prop:Fi_S(A)=Fi(A)} we can easily deduce that the g-matrix $\langle A,\Fi_\S(A)\rangle$ is a reduced g-model of $\S$. Hence $A\in\Alg(\S)$. Properties (2) and (3) are immediate consequences of (1).
\end{proof}

\begin{corollary}
If $\S$ is a $\dn$-based logic, then $\S$ is fully selfextensional.
\end{corollary}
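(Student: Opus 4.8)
The plan is to show that a $\dn$-based logic is fully selfextensional by invoking Definition \ref{def:charac fully selfextensional}: I must verify that for every $A \in \Alg(\S)$, the Frege relation of the g-matrix $\langle A, \Fi_\S(A)\rangle$ is the identity. By Theorem \ref{theo:AlgS variety}(1), we have $\Alg(\S) = \K_\S$, so it suffices to fix an arbitrary $A \in \K_\S$ and an arbitrary pair $a, b \in A$ with $\Fig_\S^A(a) = \Fig_\S^A(b)$, and deduce $a = b$.

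The key observation is that the $\S$-filter generated by a single element can be computed via the $\{m\}$-reduct distributive nearlattice structure. By Proposition \ref{prop:Fi_S(A)=Fi(A)}, the nonempty $\S$-filters of $A$ coincide with the filters of the distributive nearlattice $\langle A, m^A\rangle$. Since $a \in \Fig_\S^A(a)$, the latter is nonempty, hence it is the nearlattice filter generated by $a$; and by the discussion following Definition \ref{def:distr nearlattice} (namely $\Fig_A(a) = [a)$), we get $\Fig_\S^A(a) = [a)$ and likewise $\Fig_\S^A(b) = [b)$. So the hypothesis $\Fig_\S^A(a) = \Fig_\S^A(b)$ becomes $[a) = [b)$. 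Since $a$ is the least element of $[a)$ and $b$ is the least element of $[b)$ with respect to the order induced by $\vee^A$, we conclude $a = b$. This shows the Frege relation of $\langle A, \Fi_\S(A)\rangle$ is the identity, so $\S$ is fully selfextensional.

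Strictly speaking I should also note that Definition \ref{def:charac fully selfextensional} refers to $\Fi_\S(A)$, which may contain the empty set, whereas the Frege relation $\Lambda_A(\Fig_\S^A)$ is defined in terms of the closure operator $\Fig_\S^A$; since $\Fig_\S^A(a) = [a) \ne \emptyset$ for every $a \in A$, the empty filter plays no role in computing $\Fig_\S^A(a)$ for singletons, so this causes no difficulty. There is essentially no obstacle here: the corollary is an immediate packaging of Theorem \ref{theo:AlgS variety} together with Proposition \ref{prop:Fi_S(A)=Fi(A)} and the elementary fact that a principal upset determines its bottom element. The only point requiring the slightest care is confirming that the generated $\S$-filter $\Fig_\S^A(a)$ is genuinely the \emph{principal} nearlattice filter $[a)$ and not some larger set — but this is exactly what Proposition \ref{prop:Fi_S(A)=Fi(A)} together with $\Fig_A(a)=[a)$ gives, since $[a)$ is already an $\S$-filter containing $a$ and it is the smallest nearlattice filter containing $a$.
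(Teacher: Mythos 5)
Your proof is correct and follows essentially the same route as the paper: the paper also reduces to $A\in\K_\S$ via Theorem \ref{theo:AlgS variety} and invokes Proposition \ref{prop:Fi_S(A)=Fi(A)}, merely leaving as ``easy to check'' the step $\Fig_\S^A(a)=[a)$ that you spell out. Your explicit verification that principal upsets determine their bottom elements (and your remark about the empty filter) fills in exactly the detail the paper omits.
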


\begin{proof}
Recall the definition of fully selfextenisonality, see Definition \ref{def:charac fully selfextensional}. Let $A\in\Alg(\S)$. So $A\in\K_\S$. Then, by Proposition \ref{prop:Fi_S(A)=Fi(A)}, it is easy check that $\Lambda_A(\Fi_\S(A))=\Id_A$. Hence $\S$ is fully-selfextensional.
\end{proof}

We have characterised selfextensional logics with a $\dn$-term as those logics that are $\dn$-based concerning their canonical class of algebras. As happens in the setting of selfextensional logics with a conjunction \cite{Ja06}, two different sentential logics $\S$ and $\S'$ can be $\dn$-based relative to the same $\dn$-based variety $\K$. The unique possible case for this is when one of them has theorems and the other has not. Now we will see under what conditions the uniqueness can be obtained.

Let $\K$ be a $\dn$-based variety relative to a ternary term $m$. Let us define the sentential logic $\S_\K=\langle\Fm,\vdash_\K\rangle$\label{def:logic S_K} as follows: let $\ph_0,\dots,\ph_n,\ph\in\Fm$,
\begin{equation}\label{equa:def S_K-1}
\begin{split}
\ph_0,\dots,\ph_n\vdash_{\K}\ph
\iff
(\forall A\in\K)&(\forall h\in\Hom(\Fm,A))\\
& m^{n}(h\ph_0,\dots,h\ph_n,h\ph)\leq h\ph
\end{split}
\end{equation}
and
\begin{equation}\label{equa:def S_K-2}
\begin{split}
\emptyset\vdash_{\K}\ph
\iff
(\forall A\in\K)&(\forall h\in\Hom(\Fm,A))\\
& (\forall a\in A)(a\leq h\ph).
\end{split}
\end{equation}
Now, for every $\Gamma\subseteq\Fm$, $\Gamma\vdash_{\K}\ph$ if and only if there is a finite $\Gamma_0\subseteq\Gamma$ such that $\Gamma_0\vdash_{\K}\ph$. Notice that if $\S_\K$ has a theorem, then for every algebra $A\in\K$ the $\{m\}$-reduct nearlattice $\langle A,m^A\rangle$ has a greatest element. Moreover, since $\K$ is a variety, it follows that $\K$ is the unique $\dn$-based variety to which $\S_\K$ is $\dn$-based and hence, by Theorem \ref{theo:AlgS variety}, we have $\K_{\S_\K}=\Alg(\S_\K)=\K$\label{AlgS_K=K}.

A sentential logic $\S$ is said to be \textit{non-pseudo axiomatic} (\cite{LoSu58}) if for every formula $\ph$, $\ph$ is a theorem if and only if $\ph$ is derivable from every formula ($\psi\vdash_\S\ph$ for all formula $\psi$), or equivalently if the intersection of all its nonempty theories is the set of theorems. Notice that every sentential logic with theorems is non-pseudo axiomatic. The following proposition is an immediate consequence from \eqref{equa:def S_K-1} and \eqref{equa:def S_K-2}, and thus we omit its proof.

\begin{proposition}\label{prop:K_S_K=K}
Let $\Ll$ be an algebraic language and $m$ a ternary term of $\Ll$. If $\K$ is a $\dn$-based variety relative to $m$, then $\S_\K$ is the unique non-pseudo axiomatic sentential logic which is $\dn$-based relative to $\K$ and $m$; moreover $\K_{\S_\K}=\K$. If $\S$ is a $\dn$-based and non-pseudo axiomatic logic  relative to $m$, then $\S_{\K_\S}=\S$.
\end{proposition}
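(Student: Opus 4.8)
The plan is to prove the three assertions in turn, after first recording that $\S_\K$ is genuinely a finitary sentential logic. Axioms (S1), (S2), (S4) and (S5) are immediate from the definition: reflexivity and the passage to infinite premise sets are built in, monotonicity in the premises is Proposition \ref{prop:properties of m^n}(4) (together with the remark that $\emptyset\vdash_\K\ph$ implies $\psi\vdash_\K\ph$), and structurality holds because $h\circ\sigma\in\Hom(\Fm,A)$ whenever $\sigma$ is a substitution and $h\in\Hom(\Fm,A)$. The only clause requiring an argument is the cut rule (S3), and the key move is to rewrite, using Proposition \ref{prop:properties of m^n}(2) and (7), clause \eqref{equa:def S_K-1} as
\[
\ph_0,\dots,\ph_n\vdash_\K\ph\iff(\forall A\in\K)(\forall h\in\Hom(\Fm,A))\;\;h(\ph)\in\Fig_A(h(\ph_0),\dots,h(\ph_n)),
\]
where $\Fig_A$ denotes the filter generated in the $\{m\}$-reduct distributive nearlattice of $A$, and likewise to rewrite \eqref{equa:def S_K-2} as: $\emptyset\vdash_\K\ph$ iff $h(\ph)$ lies in every filter of $A$, for all $A\in\K$ and all $h$. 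With these reformulations (S3) follows by a short chase with generated filters, using $\Fig_A(X)\subseteq F$ for every filter $F\supseteq X$ (the edge cases with an empty premise set being handled directly through \eqref{equa:def S_K-2}).

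Next I would observe that $\S_\K$ is $\dn$-based relative to $\K$ and $m$, since $\K$ is a $\dn$-class relative to $m$ and clause \eqref{equa:def S_K-1} is verbatim the defining condition \eqref{equa:def DN-based}. As $\K$ is a variety, it is then the unique $\dn$-based variety relative to which $\S_\K$ is $\dn$-based, so by Proposition \ref{prop:dn-based-->selfextensional} and Theorem \ref{theo:AlgS variety} we get $\K_{\S_\K}=\Alg(\S_\K)=\K$. Then I would check that $\S_\K$ is non-pseudo axiomatic, i.e.\ that $\emptyset\vdash_\K\ph$ iff $\psi\vdash_\K\ph$ for every $\psi$. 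The forward implication is monotonicity. For the converse, apply the hypothesis to a variable $y$ not occurring in $\ph$: then $y\vdash_\K\ph$ gives $h(y)\vee h(\ph)\leq h(\ph)$, i.e.\ $h(y)\leq h(\ph)$, for every $A\in\K$ and every $h\in\Hom(\Fm,A)$; since $y$ does not occur in $\ph$, one may reassign $h(y)$ to an arbitrary $a\in A$ without changing $h(\ph)$, whence $a\leq h(\ph)$ for all $a\in A$, which is precisely \eqref{equa:def S_K-2}.

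For the uniqueness clause, suppose $\S'$ is a non-pseudo axiomatic sentential logic that is $\dn$-based relative to $\K$ and $m$. On nonempty finite premise sets $\vdash_{\S'}$ and $\vdash_{\S_\K}$ coincide, both being governed by the same condition \eqref{equa:def DN-based} (equivalently \eqref{equa:def S_K-1}). Hence, for each $\ph$, non-pseudo axiomaticity of $\S'$ and of $\S_\K$ yields
\[
\emptyset\vdash_{\S'}\ph\iff(\forall\psi)\;\psi\vdash_{\S'}\ph\iff(\forall\psi)\;\psi\vdash_{\S_\K}\ph\iff\emptyset\vdash_{\S_\K}\ph,
\]
and then finitarity of both logics forces $\vdash_{\S'}$ and $\vdash_{\S_\K}$ to agree on all premise sets, i.e.\ $\S'=\S_\K$. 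Finally, if $\S$ is a $\dn$-based, non-pseudo axiomatic logic relative to $m$, then Theorem \ref{theo:AlgS variety} tells us that $\K_\S=\Alg(\S)$ is a variety and that $\S$ is $\dn$-based relative to $\K_\S$ and $m$; thus $\S_{\K_\S}$ is defined, and since both $\S$ and $\S_{\K_\S}$ are non-pseudo axiomatic and $\dn$-based relative to $\K_\S$ and $m$, the uniqueness just proved (applied with $\K:=\K_\S$) gives $\S_{\K_\S}=\S$.

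The only step carrying real content is the cut rule for $\S_\K$, which is exactly why the filter reformulation of \eqref{equa:def S_K-1} via Proposition \ref{prop:properties of m^n} is the right first step; everything else is bookkeeping with the definitions and with the results already established for $\dn$-based logics.
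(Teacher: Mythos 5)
Your proof is correct and follows essentially the route the paper intends: the paper omits the proof as an immediate consequence of \eqref{equa:def S_K-1} and \eqref{equa:def S_K-2} (having already recorded, just before the proposition, that $\K$ is the unique $\dn$-based variety for $\S_\K$ and that $\K_{\S_\K}=\Alg(\S_\K)=\K$), and your argument fills in exactly those details, including the standard fresh-variable argument for non-pseudo axiomaticity and the reduction of uniqueness to agreement on finite premise sets. The only genuinely additional material is your verification that $\S_\K$ is a finitary sentential logic at all---in particular the cut rule via the reformulation $h(\ph)\in\Fig_A(h(\ph_0),\dots,h(\ph_n))$ coming from Proposition \ref{prop:properties of m^n}(7)---which the paper takes for granted and which you carry out correctly.
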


Hence, under the condition of non-pseudo axiomatic, we obtain the following kind of uniqueness for $\dn$-based logics: different non-pseudo axiomatic logics must be $\dn$-based relative to different $\dn$-based varieties.

Now, we show a bijective correspondence between the class of $\dn$-based and non-pseudo axiomatic logics and the class of subvarieties of the variety axiomatized by equations (P1)-(P3).

A sentential logic $\S'$ is said to be an \textit{extension} of a sentential logic $\S$ if and only if for every $\Gamma\cup\{\ph\}\subseteq\Fm$, $\Gamma\vdash_\S\ph$ implies $\Gamma\vdash_{\S'}\ph$.

\begin{lemma}\label{lem:Frege relation and extension}
Let $\S$ and $\S'$ be $\dn$-based and non-pseudo axiomatic logics. Then, $\Lambda(\S)\subseteq\Lambda(\S')$ if and only if $\S'$ is an extension of $\S$.
\end{lemma}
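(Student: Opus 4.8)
The statement is an equivalence, and the easier direction is surely left-to-right in the contrapositive sense: if $\S'$ is an extension of $\S$, then $\ph\dashv\vdash_\S\psi$ implies $\ph\dashv\vdash_{\S'}\psi$, so $\Lambda(\S)\subseteq\Lambda(\S')$ directly from the definitions of extension and of the Frege relation; this uses nothing special about $\dn$-based logics. The real content is the converse: assuming $\Lambda(\S)\subseteq\Lambda(\S')$, derive that $\Gamma\vdash_\S\ph$ entails $\Gamma\vdash_{\S'}\ph$.

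The plan is to exploit the semantic description of derivability in a $\dn$-based logic given by \eqref{equa:def DN-based}, together with the key fact (from \eqref{equa:aux 2}) that for a $\dn$-based logic the interderivability relation is exactly equational consequence in the associated variety: $\ph\dashv\vdash_\S\psi\iff\V(\S)\models\ph\approx\psi$. First I would translate the hypothesis $\Lambda(\S)\subseteq\Lambda(\S')$ into the inclusion of equational theories $\{\ph\approx\psi : \V(\S)\models\ph\approx\psi\}\subseteq\{\ph\approx\psi : \V(\S')\models\ph\approx\psi\}$, i.e.\ $\V(\S')\subseteq\V(\S)=\K_\S$ (here using $\V(\S)=\K_\S$ from Proposition \ref{prop:dn-based-->selfextensional}, and likewise for $\S'$). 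Then, to show $\Gamma\vdash_\S\ph$ implies $\Gamma\vdash_{\S'}\ph$, by finitarity it suffices to treat $\Gamma=\{\ph_0,\dots,\ph_n\}$ finite. By \eqref{equa:def DN-based} applied to $\S$, the hypothesis $\ph_0,\dots,\ph_n\vdash_\S\ph$ is equivalent to $\K_\S\models m^{n}(\ph_0,\dots,\ph_n,\ph)\approx\ph$ (using property (2) of Proposition \ref{prop:properties of m^n} to pass between $\leq$ and $=$). Since $\V(\S')=\K_{\S'}\subseteq\K_\S$, the same identity holds in $\K_{\S'}$, so $m^{n}(h\ph_0,\dots,h\ph_n,h\ph)\le h\ph$ for every $A\in\K_{\S'}$ and every $h\in\Hom(\Fm,A)$; applying \eqref{equa:def DN-based} in the reverse direction for $\S'$ (which is $\dn$-based relative to $\K_{\S'}$) yields $\ph_0,\dots,\ph_n\vdash_{\S'}\ph$. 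This handles $n\ge 0$; it remains to handle the empty premise set, i.e.\ theorems, and this is exactly where non-pseudo axiomaticity enters.

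The main obstacle is the treatment of theorems. The equivalence \eqref{equa:def DN-based} only governs derivations from at least one formula, so it does not by itself say anything about $\emptyset\vdash_{\S}\ph$; without an extra hypothesis one could have $\S$ with theorems and $\S'$ without, sharing the same $\dn$-based variety, so that the Frege relations agree but $\S'$ is not an extension of $\S$. The non-pseudo axiomatic hypothesis is precisely designed to rule this out: it says $\emptyset\vdash_\S\ph$ iff $\psi\vdash_\S\ph$ for every formula $\psi$, i.e.\ iff $\ph$ lies in every nonempty theory. So, assuming $\emptyset\vdash_\S\ph$, I would pick any formula $\psi$, use $\psi\vdash_\S\ph$ together with the already-proved finite case to get $\psi\vdash_{\S'}\ph$ for all $\psi$, and then invoke non-pseudo axiomaticity of $\S'$ to conclude $\emptyset\vdash_{\S'}\ph$. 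The one subtlety to check carefully is the degenerate situation where $\S$ has no nonempty theories other than $\Fm$, or where the language is trivial; but in the $\dn$-based setting $\vee$ is a disjunction and the arguments of \eqref{equa:def S_K-1}--\eqref{equa:def S_K-2} go through uniformly, so this causes no genuine difficulty. Assembling the finite-premise case, the theorem case, and the trivial forward direction completes the proof.
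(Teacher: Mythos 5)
Your proposal is correct, and both directions are handled soundly: the forward direction is immediate, the finite-premise case reduces to a single interderivability involving $m^{n}(\ph_0,\dots,\ph_n,\ph)$, and theorems are transferred exactly as in the paper, via $\psi\vdash_\S\ph$ for all $\psi$ and non-pseudo axiomaticity of $\S'$ (your worry about degenerate cases is unnecessary; nothing more is needed there). The difference is the vehicle for the key reduction. The paper stays entirely syntactic: by (A4) and items (3), (4) of Proposition \ref{prop:consequence from (A1)-(A4)}, $\ph_0,\dots,\ph_n\vdash_\S\ph$ iff $m^{n}(\ph_0,\dots,\ph_n,\ph)\dashv\vdash_\S\ph$, which is literally a pair in $\Lambda(\S)$, so the hypothesis $\Lambda(\S)\subseteq\Lambda(\S')$ transfers it directly and the same equivalences are run backwards in $\S'$; no varieties appear. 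You instead pass through the semantics: using \eqref{equa:aux 2} and $\V(\S)=\K_\S$ (Proposition \ref{prop:dn-based-->selfextensional}) you convert $\Lambda(\S)\subseteq\Lambda(\S')$ into $\K_{\S'}\subseteq\K_\S$, then move the equation $m^{n}(\ph_0,\dots,\ph_n,\ph)\approx\ph$ from $\K_\S$ to $\K_{\S'}$ and apply \eqref{equa:def DN-based} for $\S'$. Your route is heavier in that it invokes the full DN-based characterization rather than just the DN-term properties, but it buys something the paper only extracts later: the equivalence between the Frege-relation inclusion and the variety inclusion $\K_{\S'}\subseteq\K_\S$, which is precisely the computation at the heart of Theorem \ref{theo:S isom K}; so your argument effectively proves that step up front, while the paper's version is the more elementary one.
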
 

\begin{proof}
It is immediate that if $\S'$ is an extension of $\S$, then $\Lambda(\S)\subseteq\Lambda(\S')$. So, we need to prove the implication from left to right. Assume that $\Lambda(\S)\subseteq\Lambda(\S')$. By property (A4) and properties (3) and (4) of Proposition \ref{prop:consequence from (A1)-(A4)}, it follows that
\begin{multline*}
\ph_0,\dots,\ph_n\vdash_\S\ph \iff 
m^{n}(\ph_0,\dots,\ph_n,\ph)\vdash_\S\ph \iff\\
m^{n}(\ph_0,\dots,\ph_n,\ph)\dashv\vdash_\S\ph \implies
m^{n}(\ph_0,\dots,\ph_n,\ph)\dashv\vdash_{\S'}\ph \iff\\
m^{n}(\ph_0,\dots,\ph_n,\ph)\vdash_{\S'}\ph \iff
\ph_0,\dots,\ph_n\vdash_{\S'}\ph.
\end{multline*}
Now, if $\emptyset\vdash_\S\ph$, then  $\psi\vdash_\S\ph$ for every formula $\psi$. By the above, we obtain that $\psi\vdash_{\S'}\ph$ for every formula $\psi$. Now, since $\S'$ is non-pseudo axiomatic, it follows that $\emptyset\vdash_{\S'}\ph$. Hence, we have proved that $\S'$ is an extension of $\S$.
\end{proof}

Let $\Ll$ be an algebraic language and $m$ a ternary term of $\Ll$. We set
\begin{itemize}
	\item $\mathbb{S}_m(\Ll):=\{\S: \S \text{ is a }  \text{non-pseudo-axiomatic logic over } \Ll \text{ and } \dn\text{-based}\\
\text{\hspace{18mm} relative to } m\}$ and
\item $\mathbb{K}_m(\Ll):=\{\K: \K \text{ is a subvariety of the variety over } \Ll \text{ axiomatized}\\
\text{\hspace{18mm} by the equations (P1)-(P3) with regard to } m\}$.
\end{itemize}
We consider  $\mathbb{S}_m(\Ll)$ ordered by the extension order, i.e., $\S\leq\S'$ if and only if $\S'$ is an extension of $\S$ and $\mathbb{K}_m(\Ll)$ ordered by the inclusion order. Now, we are in a position to establish and prove the announced result above.

\begin{theorem}\label{theo:S isom K}
Let $\Ll$ be an algebraic language and $m$ a ternary term of $\Ll$. Then, the map $\mathrm{F}\colon\mathbb{S}_m(\Ll)\to\mathbb{K}_m(\Ll)$ defined by $\mathrm{F}(\S)=\K_\S$, is a dual order isomorphism.
\end{theorem}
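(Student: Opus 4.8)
The plan is to establish that $\mathrm{F}$ is well-defined, order-reversing in both directions, and bijective, by producing an explicit inverse. First I would check that $\mathrm{F}$ lands in $\mathbb{K}_m(\Ll)$: if $\S\in\mathbb{S}_m(\Ll)$, then by Theorem~\ref{theo:AlgS variety} we have $\K_\S=\Alg(\S)$, and since $\S$ is $\dn$-based relative to $m$, the $\{m\}$-reducts of the algebras in $\K_\S$ are distributive nearlattices; as $\K_\S$ is a variety, it satisfies (P1)--(P3) with regard to $m$, so $\K_\S\in\mathbb{K}_m(\Ll)$. For the candidate inverse I would use $\G\colon\mathbb{K}_m(\Ll)\to\mathbb{S}_m(\Ll)$ defined by $\G(\K)=\S_\K$, the logic introduced on page~\pageref{def:logic S_K}; this is well-defined into $\mathbb{S}_m(\Ll)$ because $\S_\K$ is $\dn$-based relative to $\K$ and $m$ by construction, and it is non-pseudo axiomatic by the discussion preceding Proposition~\ref{prop:K_S_K=K}.

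Next I would verify that $\G$ and $\mathrm{F}$ are mutually inverse using Proposition~\ref{prop:K_S_K=K}. For any $\K\in\mathbb{K}_m(\Ll)$, note first that $\K$ is itself a $\dn$-based variety relative to $m$ (the defining equations (P1)--(P3) say exactly that), so the proposition gives $\K_{\S_\K}=\K$, i.e.\ $\mathrm{F}(\G(\K))=\K$. For any $\S\in\mathbb{S}_m(\Ll)$, the second part of Proposition~\ref{prop:K_S_K=K} gives $\S_{\K_\S}=\S$, i.e.\ $\G(\mathrm{F}(\S))=\S$. Hence $\mathrm{F}$ is a bijection with inverse $\G$.

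It remains to check that $\mathrm{F}$ and its inverse reverse the orders, and this is where Lemma~\ref{lem:Frege relation and extension} does the work. Given $\S,\S'\in\mathbb{S}_m(\Ll)$, I would argue the chain
\begin{equation*}
\S\leq\S'\iff \S'\text{ is an extension of }\S\iff\Lambda(\S)\subseteq\Lambda(\S')\iff\V(\S)=\K_\S\subseteq\K_{\S'}=\V(\S'),
\end{equation*}
where the middle equivalence is Lemma~\ref{lem:Frege relation and extension} and the last uses \eqref{equa:aux 2}: by Proposition~\ref{prop:dn-based-->selfextensional}, $\K_\S=\V(\S)$ is precisely the variety defined by the equations $\ph\approx\psi$ with $(\ph,\psi)\in\Lambda(\S)$, so $\Lambda(\S)\subseteq\Lambda(\S')$ forces every equation valid in $\K_\S$ to be valid in $\K_{\S'}$, giving $\K_{\S'}\subseteq\K_\S$; wait---this points the inclusion the wrong way, which is exactly why the map is a \emph{dual} isomorphism. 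Being careful with the direction: a larger Frege relation means \emph{more} equations, hence a \emph{smaller} variety, so $\Lambda(\S)\subseteq\Lambda(\S')\iff\K_{\S'}\subseteq\K_\S$, i.e.\ $\S\leq\S'\iff\mathrm{F}(\S')\subseteq\mathrm{F}(\S)$. Thus $\mathrm{F}$ is an order-reversing bijection whose inverse $\G$ is therefore also order-reversing, completing the proof that $\mathrm{F}$ is a dual order isomorphism. The one delicate point to get right is keeping track of the direction of the inclusions through the equivalence between Frege relations and generated varieties; everything else is an assembly of Proposition~\ref{prop:K_S_K=K}, Proposition~\ref{prop:dn-based-->selfextensional}, and Lemma~\ref{lem:Frege relation and extension}.
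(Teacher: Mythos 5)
Your proposal is correct and follows essentially the same route as the paper: well-definedness from the $\dn$-based variety results, bijectivity from Proposition~\ref{prop:K_S_K=K} (the paper only states surjectivity explicitly, with injectivity following from the order equivalence, so your explicit inverse $\G(\K)=\S_\K$ is a cosmetic difference), and the dual order correspondence from Lemma~\ref{lem:Frege relation and extension} together with the equivalence $\Lambda(\S)\subseteq\Lambda(\S')\iff\K_{\S'}\subseteq\K_\S$. Your initial inclusion pointed the wrong way, but you caught and corrected it, and the final chain agrees with the paper's.
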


\begin{proof}
By Proposition \ref{prop:dn-based-->selfextensional}, we have that $\mathrm{F}$ is well defined, and by Proposition \ref{prop:K_S_K=K} we obtain that $\mathrm{F}$ is an onto map. Let $\S,\S'\in\mathbb{S}_m(\Ll)$. Then, by Lemma \ref{lem:Frege relation and extension} and using that $\S$ and $\S'$ are selfextensional, we have
\begin{align*}
\S\leq\S'
&\iff
\Lambda(\S)\subseteq\Lambda(\S')\\
&\iff
(\forall \ph,\psi\in\Fm)(\K_{\S}\models\ph\approx\psi\implies\K_{\S'}\models\ph\approx\psi)\\
&\iff
\K_{\S'}\subseteq\K_\S.
\end{align*}
Therefore, $\mathrm{F}$ is a dual order isomorphism.
\end{proof}

\section{Two examples}\label{sec:examples}

\subsection{The logic of distributive nearlattices}

In \cite{Go17}, it is defined a sentential logic $\S_{dn}$ through a Gentzen calculus, which can be considered as naturally associated with the variety of distributive nearlattices. There, the logic $\S_{dn}$ is denoted by $\S_\DN$, but here $\S_\DN$ has a specific definition, see \eqref{equa:def S_K-1} and \eqref{equa:def S_K-2}. Let us show that $\S_{dn}$ is the unique  non-pseudo axiomatic $\dn$-based logic relative to the variety of distributive nearlattices $\DN$. To this end, we need to introduce some basic notions of Gentzen calculus; we refer the reader to \cite{FoJa09} and \cite{Go17} for more information.

Let $\Fm$ be the algebra of formulas of a given algebraic similarity type $\Ll$. For our purpose, we will consider a \textit{sequent} of type $\Ll$ to be a pair $\langle\Gamma,\ph\rangle$ where $\Gamma$ is a (possible empty) finite set of formulas and $\ph$ is a formula. As usual, we write  $\Gamma\vtl\ph$ instead of $\langle\Gamma,\ph\rangle$. Let us denote by $\Seq(\Ll)$ the collection of all sequents. A \textit{Gentzen-style rule} is a pair $\langle X,\Gamma\vtl\ph\rangle$ where $X$ is a (possible empty) finite set of sequents and $\Gamma\vtl\ph$ is a sequent. As usual, we shall use the standard fraction notation for Gentzen-style rules:
\begin{equation}\label{equa:g-rule}
\AxiomC{$\Gamma_0\vtl\ph_0,\dots,\Gamma_{n-1}\vtl\ph_{n-1}$}
\UnaryInfC{$\Gamma\vtl\ph$}
\DisplayProof
\end{equation}
A \textit{substitution instance of a Gentzen-style rule} $\langle X,\Gamma\vtl\ph\rangle$ is a Gentzen-style rule of the form $\langle\sigma[X],\sigma[\Gamma]\vtl\sigma(\ph)\rangle$ for some substitution $\sigma\in\Hom(\Fm,\Fm)$. A \textit{Gentzen calculus} is a set of Gentzen-style rules. Given a Gentzen calculus \textbf{G}, the notion of a formal proof can be defined as usual. That is, a \textit{proof} in the Gentzen calculus \textbf{G} from a set of sequents $X$ is a finite sequence of sequents each one of whose elements is a substitution instance of a rule of \textbf{G} or a sequent in $X$ or is obtained by applying a substitution instance of a rule of \textbf{G} to previous elements in the sequence. A sequent $\Gamma\vtl\ph$ is \textit{derivable in} \textbf{G} \textit{from} a set of sequents $X$ if there is a proof in \textbf{G} from $X$ whose last sequent in the proof is $\Gamma\vtl\ph$. We express this writing $X\nc_{\textbf{G}}\Gamma\vtl\ph$.

\begin{definition}
A \textit{Gentzen system}  is a pair $\G=\langle\Fm,\nc_\G\rangle$ where $\nc_\G$ is a finitary closure operator on the set $\Seq(\Ll)$  that is substitution-invariant and which satisfies the following \textit{structural rules:} for every $\Gamma\cup\{\ph,\psi\}\subseteq\Fm$, 
\[\label{structural rules}
\AxiomC{$\emptyset$}
\LeftLabel{(Axiom)}
\UnaryInfC{$\ph\vtl\ph$}
\DisplayProof
\qquad
\AxiomC{$\Gamma\vtl\ph$}
\LeftLabel{(Weakening)}
\UnaryInfC{$\Gamma,\psi\vtl\ph$}
\DisplayProof
\qquad
\AxiomC{$\Gamma\vtl\ph$}
\AxiomC{$\Gamma,\ph\vtl\psi$}
\LeftLabel{(Cut)}
\BinaryInfC{$\Gamma\vtl\psi$}
\DisplayProof
\]
We say that a Gentzen system $\G=\langle\Fm,\nc_\G\rangle$ \textit{satisfies a Gentzen-style rule} of type \eqref{equa:g-rule} or \eqref{equa:g-rule} is a \textit{Gentzen-style rule of} $\G$ if $\Gamma_0\vtl\ph_0,\dots,\Gamma_{n-1}\vtl\ph_{n-1}\nc_\G\Gamma\vtl\ph$ and we say that a sequent $\Gamma\vtl\ph$ is a \textit{derivable sequent} of $\G$ when $\emptyset\nc_\G\Gamma\vtl\ph$.
\end{definition}

Let \textbf{G} be a Gentzen calculus with the structural rules of (Axiom), (Weakening) and (Cut). Hence, \textbf{G} defines in a standard way the Gentzen system $\G_{\textbf{G}}=\langle\Fm,\nc_{\textbf{G}}\rangle$ (see \cite{FoJa09,ReVe95}).

Now let $\Ll=\{m\}$ with $m$ a ternary connective.

\begin{definition}[{\cite[Definition 4.2]{Go17}}]\label{def:dn-logic}
Let $\G_{\DN}=\langle\Fm,\nc_{\DN}\rangle$ be the Gentzen system defined  by the following Gentzen-style rules: the structural rules (Axiom), (Weakening) and (Cut) and the following rules
\[
	\AxiomC{$\ph\vtl\chi$}
	\AxiomC{$\psi\vtl\chi$}
	\LeftLabel{$(\vee \ \vtl)$}
	\BinaryInfC{$\ph\vee\psi\vtl\chi$}
	\DisplayProof
\qquad
	\AxiomC{$\Gamma\vtl\ph$}
	\LeftLabel{$(\vtl \ \vee)$}
	\UnaryInfC{$\Gamma\vtl\ph\vee\psi$}
	\DisplayProof
	\ 
	\AxiomC{$\Gamma\vtl\ph$}
	\UnaryInfC{$\Gamma\vtl\ph\vee\psi$}
	\DisplayProof
\]
\vspace{1mm}
\[
	\AxiomC{}
	\LeftLabel{$(m \ \vtl)$}
	\UnaryInfC{$m(\ph,\psi,\chi)\vtl\ph\vee\chi$}
	\DisplayProof
	\ 
	\AxiomC{}
	\UnaryInfC{$m(\ph,\psi,\chi)\vtl\psi\vee\chi$}
	\DisplayProof
\]
\vspace{1mm}
\[
	\AxiomC{$\Gamma\vtl\ph\vee\chi$}
	\AxiomC{$\Gamma\vtl\psi\vee\chi$}
	\LeftLabel{$(\vtl \ m)$}
	\BinaryInfC{$\Gamma\vtl m(\ph,\psi,\chi)$}
	\DisplayProof
\qquad
	\AxiomC{$\ph_1,\dots,\ph_n\vtl\ph$}
	\LeftLabel{$(m^n \ \vtl)$}
	\UnaryInfC{$m^{n}(\ph_0,\dots,\ph_n,\ph)\vtl\ph$}
	\DisplayProof
\]

\noindent Now, the sentential $\S_{dn}=\langle\Fm,\vdash_{dn}\rangle$ is defined as follows: for all $\Gamma\cup\{\ph\}\subseteq\Fm$,
\[
\Gamma\vdash_{dn}\ph \iff \text{there is a finte } \Gamma_0\subseteq\Gamma \text{ such that } \nc_\DN\Gamma_0\vtl\ph.
\]
\end{definition}

It follows directly that the connective $m$ is a $\dn$-term of $\S_{dn}$. Indeed, property (A1) follows from rules $(\vee \ \vtl)$ and $(\vtl \ \vee)$; (A2) follows from $(m \ \vtl)$; property (A3) is a consequence of $(\vtl \ m)$; and lastly, (A4) follows from the rule $(m^n \ \vtl)$.

\begin{theorem}[{\cite[Theorem 4.15]{Go17}}]
The sentential logic $\S_{dn}$ has the following properties:
\begin{enumerate}[{\normalfont (1)}]
	\item $\Alg(\S_{dn})=\DN$;
	\item for all $\ph_0,\dots,\ph_n,\ph\in\Fm$,
	\[
	\ph_0,\dots,\ph_n\vdash_{dn}\ph \iff \DN\models m^{n}(\ph_0,\dots,\ph_n,\ph)\approx\ph.
	\]
	\item for every $A\in\Alg(\S_{dn})$, $\Fi_{\S_{dn}}(A)=\Fi(A)\cup\{\emptyset\}$;
	\item $\S_{dn}$ is fully selfextensional.
\end{enumerate}
\end{theorem}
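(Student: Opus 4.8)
The statement to prove is the characterization theorem for the logic $\S_{dn}$ (the final displayed theorem, attributed to \cite[Theorem 4.15]{Go17}). The natural plan is to deduce it from the general machinery already developed, now that we know $m$ is a $\dn$-term of $\S_{dn}$.

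\textbf{The plan.} First I would observe that, having just verified that $m$ is a $\dn$-term of $\S_{dn}$, it remains only to check that $\S_{dn}$ is selfextensional; then Theorem \ref{theo:dn-term iff dn-based} gives that $\S_{dn}$ is $\dn$-based (relative to $m$), and Theorem \ref{theo:AlgS variety} immediately yields $\Alg(\S_{dn})=\K_{\S_{dn}}=\V(\S_{dn})$, a variety, together with item (2) via the chain of equivalences in the displayed computation following Definition \ref{def:DN-based logic}, and the Corollary gives full selfextensionality, hence item (4). Item (3) is then Proposition \ref{prop:Fi_S(A)=Fi(A)} together with the observation that $\emptyset$ is always an $\S_{dn}$-filter here (since $\S_{dn}$ has no theorems: no sequent $\emptyset\vtl\ph$ is derivable, as one checks by inspecting the rules — every rule with empty premise set has a nonempty antecedent). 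So the real work is: (a) selfextensionality of $\S_{dn}$, and (b) identifying $\V(\S_{dn})$ with $\DN$, so that the abstract $\Alg(\S_{dn})$ and $\K_\S$ become the concrete $\DN$, and the equation in (2) becomes the stated one over $\DN$.

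\textbf{Selfextensionality and the identification with $\DN$.} For selfextensionality I would show directly that the Frege relation $\Lambda(\S_{dn})$ is a congruence on $\Fm$; the cleanest route is to show $\ph\dashv\vdash_{dn}\psi$ implies $m(\ph,\gamma,\delta)\dashv\vdash_{dn}m(\psi,\gamma,\delta)$ and similarly in the other two coordinates, using (A1)–(A3) exactly as in the proof of the Proposition that $\dn$-terms are unique up to interderivability (the argument $\ph\vee\chi\dashv\vdash\psi\vee\chi$ from $\ph\dashv\vdash\psi$ via (A1), then (A2)–(A3)). Alternatively, one invokes Theorem \ref{theo:dn-term iff dn-based}: we would need $\S_{dn}$ $\dn$-based relative to \emph{some} $\dn$-class, which we get from the soundness/completeness of $\G_\DN$ with respect to $\DN$ proved in \cite{Go17} — but to keep the exposition self-contained I would prefer the direct congruence argument. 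Once selfextensionality is in hand, $\V(\S_{dn})$ is the unique variety relative to which $\S_{dn}$ is $\dn$-based; to see it equals $\DN$, note that by \eqref{equa:aux 2} we have $\ph\dashv\vdash_{dn}\psi$ iff $\V(\S_{dn})\models\ph\approx\psi$, and from the Gentzen rules one reads off that $\S_{dn}$ proves exactly the interderivabilities forced by (P1)–(P3) (i.e., by the defining identities of $\DN$); combined with $\V(\S_{dn})$ being a $\dn$-class relative to $m$ (so a subvariety of $\DN$) and $\DN$-validity of an equation implying $\dn$-derivable interderivability, one concludes $\V(\S_{dn})=\DN$. Then $\Alg(\S_{dn})=\K_{\S_{dn}}=\V(\S_{dn})=\DN$ by Theorem \ref{theo:AlgS variety}, giving (1); (2) is the displayed equivalence chain after Definition \ref{def:DN-based logic} instantiated at $\K=\DN$; (3) follows from Proposition \ref{prop:Fi_S(A)=Fi(A)} plus $\emptyset\in\Fi_{\S_{dn}}(A)$ (no theorems); and (4) is the Corollary.

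\textbf{Main obstacle.} The one genuinely non-formal point is pinning down $\V(\S_{dn})=\DN$ exactly — i.e. that the interderivability relation generated by the Gentzen calculus is neither too coarse (which would make $\V(\S_{dn})$ a proper subvariety) nor, trivially, too fine. The $\subseteq\DN$ direction is easy since $m$ is a $\dn$-term so (P1)–(P3) hold in $\V(\S_{dn})$; the reverse inclusion $\DN\subseteq\V(\S_{dn})$ is where one must argue that every $\DN$-identity is $\dn_{}$-derivable as an interderivability, and the most economical justification is to quote the completeness of $\G_\DN$ for $\DN$ from \cite{Go17}. Everything else in the theorem is then a bookkeeping application of Theorems \ref{theo:dn-term iff dn-based} and \ref{theo:AlgS variety}, Proposition \ref{prop:Fi_S(A)=Fi(A)}, and the Corollary, so I would present the proof as essentially a corollary of the general theory plus the already-noted fact that $m$ is a $\dn$-term of $\S_{dn}$.
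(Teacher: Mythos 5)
First, a point of comparison: the paper does not prove this statement at all --- it is imported verbatim from \cite[Theorem 4.15]{Go17} --- so any internal derivation such as yours is necessarily a different route. Architecturally your plan is sensible: verify that $m$ is a $\dn$-term of $\S_{dn}$ (as the paper notes), prove selfextensionality, and then harvest (1)--(4) from Theorem \ref{theo:dn-term iff dn-based}, Theorem \ref{theo:AlgS variety}, Proposition \ref{prop:Fi_S(A)=Fi(A)} and the Corollary on full selfextensionality. Your direct congruence argument for selfextensionality does work here, precisely because the language of $\S_{dn}$ is $\{m\}$ and replacement in each coordinate of $m$ follows from (A1)--(A3); and your ``no theorems'' observation (no derivable sequent has empty antecedent) correctly supplies $\emptyset\in\Fi_{\S_{dn}}(A)$ for item (3).

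The genuine gap is at the step you yourself flag as the main obstacle, the identification $\V(\S_{dn})=\DN$: you have attached the two inclusions to the wrong halves of soundness/completeness. Since $\V(\S_{dn})$ is generated by $\Fm/\Lambda(\S_{dn})$, the identities valid in $\V(\S_{dn})$ are exactly the pairs $\ph\approx\psi$ with $\ph\dashv\vdash_{dn}\psi$. Consequently the ``easy'' inclusion $\V(\S_{dn})\subseteq\DN$, which you correctly obtain for free from the fact that $\Fm/\Lambda(\S_{dn})$ is a distributive nearlattice, \emph{is} the statement that every $\DN$-identity is a derivable interderivability, i.e.\ completeness. The remaining inclusion $\DN\subseteq\V(\S_{dn})$ is the converse: every identity valid in $\V(\S_{dn})$ (every derivable interderivability) must hold in every distributive nearlattice --- that is soundness of $\G_\DN$ over $\DN$. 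So quoting the completeness of $\G_\DN$ from \cite{Go17}, as you propose, justifies the inclusion you already possess and leaves the one you actually need unsupported. The repair is routine but must be carried out (or cited as the soundness half of \cite{Go17}): check rule by rule that each rule of $\G_\DN$ preserves validity of sequents in distributive nearlattices under the interpretation $m^{n}(h\ph_0,\dots,h\ph_n,h\ph)=h\ph$, and induct on derivations. Note also that item (2) is exactly this soundness-and-completeness statement, so whichever half is not delivered by the general theory has to be verified against the calculus itself; it cannot all be reduced to bookkeeping, and as written your justification for the key inclusion would not go through.
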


Therefore, by condition (2) of the previous theorem, we have that the logic $\S_{dn}$ is $\dn$-based relative to the variety $\DN$. Moreover, since $\S_{dn}$ is non-pseudo axiomatic, it follows by  Proposition \ref{prop:K_S_K=K} that $\S_{dn}=\S_\DN$.

\subsection{Modal distributive nearlattices}

In \cite{Ca15}, a modal operator is defined on distributive nearlattices. There, the main tool to treat these operators was a topological duality for the category of distributive nearlattices. 

Let us consider the algebraic language $\Ll=\{m,\square,\top\}$ of type (3,1,0). 

\begin{definition}\label{def:modal}
An algebra $\langle A,m,\square,1\rangle$ is said to be a $\square$-\textit{modal distributive nearlattice} if $\langle A,m,1\rangle$ is a distributive nearlattice with a greatest element 1, and the following conditions hold:
\begin{enumerate}
	\item $\square 1=1$;
	\item for all $a,b\in A$ such that $a\wedge b$ there exists, $\square(a\wedge b)=\square a\wedge\square b$.
\end{enumerate}
\end{definition}

We denote by $\square\DN$ the collection of all $\square$-modal distributive nearlattices. Let us show that $\square\DN$ is a variety.

\begin{proposition}
Let $\langle A,m,\square,1\rangle$ be an algebra such that $\langle A,m,1\rangle$ is a distributive nearlattice with a greatest element $1$ and $\square 1=1$. Then, $\langle A,m,\square,1\rangle\in\square\DN$ if and only if the following identity holds in $\langle A,m,\square,1\rangle$:
\begin{equation}\label{equa:square}
\square m(x,y,z)=m(\square(x\vee z),\square(y\vee z),\square z).\tag{M}
\end{equation}
Therefore, $\square\DN$ is the variety defined by identities of distributive nearlattices with a greatest element, and the identities $\square 1=1$ and \eqref{equa:square}.
\end{proposition}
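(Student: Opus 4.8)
The plan is to prove the equivalence by unpacking the two conditions defining a $\square$-modal distributive nearlattice in terms of the nearlattice operation $m$ and the derived meet, using repeatedly the identity $m(a,b,c)=(a\vee c)\wedge(b\vee c)$ valid in any distributive nearlattice (Theorem \ref{theo:characterisation DN} and the discussion after it) together with distributivity (Definition \ref{def:distr nearlattice}, in the form of (P4): $a\vee m(b,c,d)=m(a\vee b,a\vee c,d)$, which specialises to $\square$-free computations inside the lattices $[a)$). Note first that, since $1$ is the greatest element, for any $a,b$ the meet $a\wedge b$ exists (it is $a\wedge_1 b$ computed in the lattice $[1)=A$ — more precisely, every pair has the lower bound $a\wedge_c b$ for any common lower bound $c$; but with a top the whole algebra is a lattice, so $\wedge$ is everywhere defined). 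Hence condition (2) of Definition \ref{def:modal} simplifies to the unconditional identity $\square(x\wedge y)=\square x\wedge\square y$, and $\square$ is a meet-endomorphism of the lattice $\langle A,\wedge,\vee\rangle$.

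First I would prove the implication from $\square\DN$ to \eqref{equa:square}. Given $A\in\square\DN$, compute for arbitrary $x,y,z$:
\begin{equation*}
\square m(x,y,z)=\square\bigl((x\vee z)\wedge(y\vee z)\bigr)=\square(x\vee z)\wedge\square(y\vee z),
\end{equation*}
using that $m(x,y,z)=(x\vee z)\wedge(y\vee z)$ and then condition (2). On the other hand, since $z\leq x\vee z$ and $z\leq y\vee z$, monotonicity of $\square$ (which follows from its being a meet-homomorphism: $a\leq b$ gives $\square a=\square(a\wedge b)=\square a\wedge\square b\leq\square b$) yields $\square z\leq\square(x\vee z)$ and $\square z\leq\square(y\vee z)$, so both $\square(x\vee z)$ and $\square(y\vee z)$ lie in the upset $[\square z)$, and therefore
\begin{equation*}
m(\square(x\vee z),\square(y\vee z),\square z)=\bigl(\square(x\vee z)\vee\square z\bigr)\wedge\bigl(\square(y\vee z)\vee\square z\bigr)=\square(x\vee z)\wedge\square(y\vee z).
\end{equation*}
Comparing the two displays gives \eqref{equa:square}.

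For the converse, assume \eqref{equa:square} holds. To recover $\square 1=1$ we are already given it as a hypothesis, so it remains to derive condition (2) of Definition \ref{def:modal}, i.e. $\square(a\wedge b)=\square a\wedge\square b$ whenever $a\wedge b$ exists. The idea is to instantiate \eqref{equa:square} at a judiciously chosen triple so that the derived meet on the left becomes $a\wedge b$. Take $x:=a$, $y:=b$, $z:=a\wedge b$ (which exists by hypothesis); then $x\vee z=a\vee(a\wedge b)=a$ and $y\vee z=b\vee(a\wedge b)=b$ by absorption, and $m(a,b,a\wedge b)=(a\vee(a\wedge b))\wedge(b\vee(a\wedge b))=a\wedge b$. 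Plugging into \eqref{equa:square}:
\begin{equation*}
\square(a\wedge b)=\square m(a,b,a\wedge b)=m(\square a,\square b,\square(a\wedge b)).
\end{equation*}
Now $\square(a\wedge b)\leq\square a$ and $\square(a\wedge b)\leq\square b$ — this needs monotonicity of $\square$, which under the bare assumption \eqref{equa:square} I would establish by a further instantiation: for $c\leq d$ put $x:=c$, $y:=1$, $z:=c$ to get $\square c=\square m(c,1,c)=m(\square c,\square 1,\square c)=m(\square c,1,\square c)=\square c$ — not informative; instead take $x:=d$, $y:=d$, $z:=c$, so $m(d,d,c)=d\vee c=d$ (as $c\leq d$), giving $\square d=m(\square(d\vee c),\square(d\vee c),\square c)=\square(d\vee c)\vee\square c=\square d\vee\square c$, hence $\square c\leq\square d$. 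With monotonicity in hand, $\square a$ and $\square b$ both lie in $[\square(a\wedge b))$, so $m(\square a,\square b,\square(a\wedge b))=(\square a\vee\square(a\wedge b))\wedge(\square b\vee\square(a\wedge b))=\square a\wedge\square b$, and the displayed equation becomes $\square(a\wedge b)=\square a\wedge\square b$, as required. The final sentence of the proposition — that $\square\DN$ is the variety axiomatised by the distributive nearlattice identities with top, $\square 1=1$, and \eqref{equa:square} — then follows immediately, since all of these are identities and the equivalence just proved shows they cut out exactly $\square\DN$.

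The main obstacle I anticipate is the bookkeeping around the \emph{partiality} of $\wedge$: one must be careful that whenever the meet symbol is written it does exist (which is why the presence of the top element $1$, forcing $A$ itself to be a lattice, is doing real work), and that the instantiations chosen for the converse genuinely land in the relevant principal upsets so that the formula $m(a,b,c)=(a\vee c)\wedge(b\vee c)$ collapses as claimed. Establishing monotonicity of $\square$ from \eqref{equa:square} alone, rather than assuming it, is the one slightly delicate point; everything else is absorption and the defining identities of distributive nearlattices.
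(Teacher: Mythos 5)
Your core argument is correct and is essentially the paper's own proof: the forward direction computes $\square m(x,y,z)=\square(x\vee z)\wedge\square(y\vee z)$ from condition (2) and monotonicity and then reassembles it as $m(\square(x\vee z),\square(y\vee z),\square z)$; the converse first extracts monotonicity of $\square$ from \eqref{equa:square} via $m(d,d,c)$ with $c\leq d$, and then instantiates \eqref{equa:square} at $(a,b,a\wedge b)$ and uses absorption, exactly as in the paper (only the order of the two steps differs).

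One incidental claim in your preamble is false and should be deleted: a distributive nearlattice with a greatest element is \emph{not} in general a lattice, and $[1)$ is $\{1\}$, not $A$; principal \emph{upsets} are lattices, so a top element gives no lower bounds. The paper's own Figure \ref{fig:not hold A4} (three incomparable atoms under a top) is a counterexample: $a\wedge b$ does not exist there. Consequently condition (2) of Definition \ref{def:modal} does not become unconditional, and $\square$ is not a meet-endomorphism of a lattice $\langle A,\wedge,\vee\rangle$; your closing remark that the top ``forces $A$ itself to be a lattice'' and that this ``is doing real work'' misattributes why the argument succeeds. Fortunately the error is not load-bearing: every meet you actually manipulate has an explicit common lower bound ($z$, $\square z$, $a\wedge b$, or $\square(a\wedge b)$), so each application of condition (2) and each identity $m(u,v,w)=(u\vee w)\wedge(v\vee w)$ is legitimate as written, and in the converse you correctly state and prove the conditional form of (2). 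With that remark removed, the proof stands as is.
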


\begin{proof}
First assume that $\langle A,m,\square,1\rangle\in\square\DN$. Let $a,b,c\in A$. Since the operator $\square$ is order-preserving, it follows that $\square c\leq\square(a\vee c)$ and $\square c\leq\square(b\vee c)$. Then, we have
\begin{multline*}
\square m(a,b,c) = \square[(a\vee c)\wedge(b\vee c)] = \square(a\vee c)\wedge\square(b\vee c)\\
 = (\square(a\vee c)\vee\square c)\wedge(\square(b\vee c)\vee \square c) = m(\square(a\vee c),\square(b\vee c),\square c).
\end{multline*}
Hence, identity \eqref{equa:square} holds in $A$. Now, conversely, suppose that \eqref{equa:square} holds in $A$. We need to check that condition (2) in Definition \ref{def:modal} is satisfied. First, we show that $\square$ is order-preserving. Let $a,b\in A$ be such that $a\leq b$. So $b=b\vee a=m(b,b,a)$. Then, we have
\[
\square b=\square m(b,b,a)=m(\square(b\vee a),\square(b\vee a),\square a) = m(\square b,\square b,\square a)=\square b\vee \square a.
\]
Thus we obtain that $\square a\leq\square b$. Now let $a,b\in A$ be such that $a\wedge b$ there exists. Since $\square$ is order-preserving, we have $\square(a\wedge b)\leq\square a,\square b$. Hence
\begin{multline*}
\square(a\wedge b) = \square m(a,b,a\wedge b) = m(\square a,\square b,\square(a\wedge b))\\
 = (\square a\vee\square(a\wedge b))\wedge(\square b\vee\square(a\wedge b)) = \square a \wedge \square b.
\end{multline*}
Therefore $\langle A,m,\square,1\rangle$ is a $\square$-modal distributive nearlattice.
\end{proof}

Consider the $\dn$-based logic $\S_{\square\DN}$ on the algebraic language $\Ll=\{m,\square,\top\}$ defined by \eqref{equa:def S_K-1} and \eqref{equa:def S_K-2}. We already know that $\S_{\square\DN}$ satisfies properties (A1)-(A4). Moreover, it is straightforward to show directly that the following conditions hold:
\begin{enumerate}
	\item[(N)] $\vdash_{\S_{\square\DN}}\ph$ implies $\vdash_{\S_{\square\DN}}\square\ph$;
	\item[$(\square m)$] $\square m(\ph,\psi,\chi)\vdash_{\S_{\square\DN}}m(\square(\ph\vee\chi),\square(\psi\vee\chi),\square\chi)$;
	\item[$(m\square)$] $m(\square(\ph\vee\chi),\square(\psi\vee\chi),\square\chi)\vdash_{\S_{\square\DN}}\square m(\ph,\psi,\chi)$.
\end{enumerate}
Moreover, notice that $\K_{\S_{\square\DN}}=\Alg(\S_{\square\DN})=\square\DN$ (see on page \pageref{AlgS_K=K}). Now let us show that $\S_{\square\DN}$ is the weakest selfextensional non-pseudo axiomatic logic satisfying conditions (A1)-(A4), (N), $(\square m)$ and $(m\square)$.

\begin{proposition}
Let $\S$ be a selfextensional non-pseudo axiomatic logic satisfying conditions \textup{(B1)-(A4), (N)}, $(\square m)$ and $(m\square)$. Then, $\S$ is an extension of $\S_{\square\DN}$.
\end{proposition}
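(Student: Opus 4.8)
The plan is to obtain the statement from the Frege-relation criterion for extensions (Lemma \ref{lem:Frege relation and extension}), after identifying the algebraic counterparts of the two logics. Since $\S$ satisfies (A1)--(A4), the term $m$ is a $\dn$-term of $\S$ (Definition \ref{def:dn-term}); being also selfextensional, $\S$ is $\dn$-based relative to $m$ by Theorem \ref{theo:dn-term iff dn-based}, so by Proposition \ref{prop:dn-based-->selfextensional} and Theorem \ref{theo:AlgS variety} its intrinsic variety $\V(\S)=\K_\S=\Alg(\S)$ is a variety whose members all have a distributive-nearlattice $\{m\}$-reduct. On the other side, $\S_{\square\DN}$ is $\dn$-based relative to $m$ with $\K_{\S_{\square\DN}}=\Alg(\S_{\square\DN})=\square\DN$, and it is non-pseudo axiomatic, having theorems since the members of $\square\DN$ have a greatest element. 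As both logics are $\dn$-based and non-pseudo axiomatic, Lemma \ref{lem:Frege relation and extension} reduces the goal to proving $\Lambda(\S_{\square\DN})\subseteq\Lambda(\S)$.

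I would then rephrase this inclusion algebraically. By selfextensionality of both logics together with \eqref{equa:aux 2}, $(\ph,\psi)\in\Lambda(\S_{\square\DN})\iff\square\DN\models\ph\approx\psi$ and $(\ph,\psi)\in\Lambda(\S)\iff\V(\S)\models\ph\approx\psi$, so $\Lambda(\S_{\square\DN})\subseteq\Lambda(\S)$ says exactly that every identity valid in $\square\DN$ is valid in $\V(\S)$, that is, $\V(\S)\subseteq\square\DN$. Thus it remains to show that every $A\in\V(\S)$ is a $\square$-modal distributive nearlattice (Definition \ref{def:modal}). The distributive-nearlattice identities (P1)--(P3) hold in $A$ because $m$ is a $\dn$-term of $\S$. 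For the remaining defining identities of $\square\DN$ I would use \eqref{equa:aux 2} in the form ``$\alpha\dashv\vdash_\S\beta$ iff $\V(\S)\models\alpha\approx\beta$'': the identity saying that $\top^A$ is the greatest element follows from $p\vee\top\dashv\vdash_\S\top$ for a variable $p$ (here $\top\vdash_\S p\vee\top$ is immediate from (A1), and $p\vee\top\vdash_\S\top$ follows from (A1) once $p\vdash_\S\top$ is available); $\square\top\approx\top$ follows from $\top\dashv\vdash_\S\square\top$, which follows in turn from $\vdash_\S\square\top$ (obtained from (N) and $\vdash_\S\top$) together with monotonicity; and identity \eqref{equa:square} follows from the interderivability $\square m(\ph,\psi,\chi)\dashv\vdash_\S m(\square(\ph\vee\chi),\square(\psi\vee\chi),\square\chi)$ provided by $(\square m)$ and $(m\square)$. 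Having thereby established the hypotheses of the characterisation of the members of $\square\DN$ through \eqref{equa:square} for $A$, namely that $\langle A,m^A\rangle$ is a distributive nearlattice with greatest element $\top^A$ and $\square^A\top^A=\top^A$, together with \eqref{equa:square} itself, that proposition gives $A\in\square\DN$. Hence $\V(\S)\subseteq\square\DN$, so $\Lambda(\S_{\square\DN})\subseteq\Lambda(\S)$, and Lemma \ref{lem:Frege relation and extension} concludes that $\S$ is an extension of $\S_{\square\DN}$.

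The step I expect to need the most care is the treatment of the constant $\top$: the rules (A1)--(A4), (N), $(\square m)$ and $(m\square)$ constrain only the behaviour of $m$, $\vee$ and $\square$, whereas the inclusion $\V(\S)\subseteq\square\DN$ forces $\top^A$ to be the top element of every $A\in\V(\S)$, equivalently forces $\psi\vdash_\S\top$ for all formulas $\psi$. So the argument above uses, at the two points indicated, that $\top$ is a theorem of $\S$; this is the property matching the corresponding fact about $\S_{\square\DN}$ (which does prove $\top$), and once it is granted the rest of the proof is a routine translation between the defining identities of $\square\DN$ and interderivabilities of $\S$ via \eqref{equa:aux 2}.
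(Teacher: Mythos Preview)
Your approach is essentially the paper's: show $\K_\S\subseteq\square\DN$ and then invoke the order-reversing correspondence between $\dn$-based non-pseudo-axiomatic logics and their intrinsic varieties. The paper cites Theorem~\ref{theo:S isom K} for this last step, while you go through Lemma~\ref{lem:Frege relation and extension} together with \eqref{equa:aux 2}; since Theorem~\ref{theo:S isom K} is proved from Lemma~\ref{lem:Frege relation and extension} in exactly that way, there is no substantive difference.

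Your caution about $\top$ is well placed and is in fact more careful than the paper. The listed hypotheses (A1)--(A4), (N), $(\square m)$, $(m\square)$ do not mention $\top$ at all, so nothing in them forces $\psi\vdash_\S\top$ (equivalently, that $\top^A$ is the greatest element of each $A\in\K_\S$). The paper simply asserts that ``it is straightforward to show that $\K_\S\subseteq\square\DN$'' without addressing this point; strictly speaking, an axiom of the form $\vdash_\S\top$ (or $\psi\vdash_\S\top$) is needed in the hypotheses for the argument to go through. You are right to flag this as an additional assumption, and once it is granted your derivations of $p\vee\top\dashv\vdash_\S\top$, $\square\top\dashv\vdash_\S\top$, and identity~\eqref{equa:square} via $(\square m)$ and $(m\square)$ are exactly what is needed to place every $A\in\K_\S$ inside $\square\DN$.
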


\begin{proof}
Let $\S$ be a selfextensional non-pseudo axiomatic logic satisfying conditions \textup{(A1)-(A4), (N)}, $(\square m)$ and $(m\square)$. Then, by Theorems \ref{theo:dn-term iff dn-based} and \ref{theo:AlgS variety}, we have that $\S$ is $\dn$-based relative to $\Alg(\S)=\K_\S$. Since $\S$ satisfies conditions (N), $(\square m)$ and $(m\square)$, it is straightforward to show that $\K_\S\subseteq\square\DN$. Hence, since $\K_\S\subseteq\square\DN=\K_{\S_{\square\DN}}$, we obtain by Theorem \ref{theo:S isom K} that $\S_{\square\DN}\leq \S$.
\end{proof}

\section{The truth-preserving logic and the logic preserving degrees of truth}\label{sec:5}

Let $\S$ be a selfextensional logic with a $\dn$-term $m$. We know that the canonical class  of algebras $\Alg(\S)$ associated with $\S$ is a $\dn$-based variety, that is, $\Alg(\S)$ is a variety and for every $A\in\Alg(\S)$, the $\{m\}$-reduct $\langle A,m^A\rangle$ is a distributive nearlattice. Hence, every algebra $A$ in $\Alg(\S)$ has associated a partial order. Thus, a sentential logic can be defined by this class of algebras: a formula $\ph$ is a logical consequence from some premises if and only if for every algebra and every interpretation, whenever the interpretation of the premises have a common lower bound, the interpretation of the formula $\ph$ also has the same lower bound; the so-called \textit{the logic that preserves degrees of truth}.

Let now $\S$ be a selfextensional logic with a $\dn$-term $m$ and with theorems. Thus, for every $A\in\Alg(\S)$, the distributive nearlattice $\langle A,m^A\rangle$ has a greatest element $h\ph$, where $\ph$ is any theorem of $\S$ and $h$ is any interpretation on $A$. We denote this element, for every algebra $A$, by $1^A$. If for every algebra $A$ we consider that $1^A$ is the only truth value (\textit{the truth}), then it can be defined a logic as follows:  a formula $\ph$ follows logically from some premises if and only if for every algebra and every interpretation, whenever the interpretation of the premises are true, then the interpretation of the formula $\ph$ is true. This logic is known in the literature as \textit{the truth-preserving logic} (or as \textit{the assertional logic}) associated with $\Alg(\S)$.

In this section, given a selfextensional logic $\S$ with a $\dn$-term (and with theorems), we study the connections between this logic $\S$ and both the logic that preserves degrees of truth and the  truth-preserving logic associated with the class of algebras $\Alg(\S)$.

\subsection{The logic preserving degrees of truth}

\textit{Logics  preserving degrees of truth} associated with some particular ordered algebraic structures are studied and discussed in several articles, for instance see \cite{Fo03,FoGiToVe06,No90,Fo09}. In particular, \cite{Fo03} is an interesting contribution to the discussion on the role of degrees of truth in many-valued logics from the point of view of Abstract Algebraic Logic.

Let $\K$ be a class of algebras such that every algebra $A\in\K$ has a partial order $\leq$ associated with its universe. Thus, \textit{the logic preserving degrees of truth} with respect to $\K$ is defined as follows:
let $\ph_1,\dots,\ph_n,\ph\in\Fm$,
\begin{equation}\label{equa:logic preserving degrees 1}
\begin{split}
\ph_1,\dots,\ph_n\vdash_\K^\leq\ph \iff 	&(\forall A\in \K)(\forall h\in\Hom(\Fm,A))\\
										&(\forall a\in A)(\text{if } a\leq h\ph_i \ \forall i=1,\dots,n, \text{ then }  a\leq h\ph)
\end{split}
\end{equation}
and
\begin{equation}\label{equa:logic preserving degrees 2}
\begin{split}
\emptyset\vdash_\K^\leq\ph
\iff
(\forall A\in\K)(\forall h\in\Hom(\Fm,A))
 (\forall a\in A)(a\leq h\ph).
\end{split}
\end{equation}
For an arbitrary $\Gamma\subseteq\Fm$, 
\begin{equation}\label{equa:logic preserving degrees 3}
\Gamma\vdash_\K^\leq\ph \iff \text{ there is a finite } \Gamma_0\subseteq\Gamma \text{ such that } \Gamma_0\vdash_\K^\leq\ph.
\end{equation}

\begin{definition}\label{def:S_K^<}
Let $\K$ be a $\dn$-based class of algebras. The \textit{logic preserving degrees of truth with respect to} $\K$, $\S_\K^\leq=\langle \Fm,\vdash_\K^\leq\rangle$, is defined by \eqref{equa:logic preserving degrees 1}--\eqref{equa:logic preserving degrees 3}. 
\end{definition}

\begin{proposition}\label{prop:Sl extension of SK}
The logic $\S_\K^\leq$ is an extension of the logic $\S_\K$. Moreover, $\S_\K$ and $\S_\K^\leq$ have the same theorems, that is, $\vdash_\K\ph$ if and only if $\vdash_\K^\leq\ph$.
\end{proposition}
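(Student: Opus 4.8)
The plan is to reduce the extension claim to the finite-premise case and then to apply Proposition~\ref{prop:properties of m^n}(3) inside each $\langle A,m^A\rangle$. Since both $\vdash_\K$ and $\vdash_\K^\leq$ are finitary (equations \eqref{equa:def S_K-1} and \eqref{equa:logic preserving degrees 3} build them from their finite parts), it suffices to show that $\ph_0,\dots,\ph_n\vdash_\K\ph$ implies $\ph_0,\dots,\ph_n\vdash_\K^\leq\ph$ for every finite, possibly empty, list $\ph_0,\dots,\ph_n$. In the empty case the defining conditions \eqref{equa:def S_K-2} and \eqref{equa:logic preserving degrees 2} are literally the same, so $\emptyset\vdash_\K\ph$ and $\emptyset\vdash_\K^\leq\ph$ are equivalent; this disposes of the empty case of the extension claim and also yields at once the second assertion $\vdash_\K\ph\iff\vdash_\K^\leq\ph$.

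For a nonempty list I would argue as follows. Assume $\ph_0,\dots,\ph_n\vdash_\K\ph$, fix $A\in\K$, $h\in\Hom(\Fm,A)$ and $a\in A$ with $a\leq h\ph_i$ for every $i\in\{0,\dots,n\}$, and recall that $\langle A,m^A\rangle$ is a distributive nearlattice because $\K$ is a $\dn$-based class (Definition~\ref{def:DN-class}). By Proposition~\ref{prop:properties of m^n}(3), applied with $a_i:=h\ph_i$ and $b:=h\ph$, the common lower bound $a$ of the $h\ph_i$ satisfies $a\leq m^{n}(h\ph_0,\dots,h\ph_n,h\ph)$. On the other hand, \eqref{equa:def S_K-1} applied to the hypothesis gives $m^{n}(h\ph_0,\dots,h\ph_n,h\ph)\leq h\ph$, so $a\leq h\ph$ by transitivity, which is precisely the condition appearing in \eqref{equa:logic preserving degrees 1}. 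Hence $\ph_0,\dots,\ph_n\vdash_\K^\leq\ph$, and therefore $\S_\K^\leq$ is an extension of $\S_\K$.

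There is no genuine obstacle here: once the definitions are unwound the argument is a one-line order computation, and the substantive input is exactly Proposition~\ref{prop:properties of m^n}(3) (that a common lower bound of $a_0,\dots,a_n$ is also a lower bound of $m^{n}(a_0,\dots,a_n,b)$). The only point to keep in mind is that the $m^{n}$-characterisation of $\vdash_\K$ is stated only for nonempty premise sets, so the empty case has to be handled separately — but there the two consequence relations agree by definition, which is why the same-theorems statement is immediate.
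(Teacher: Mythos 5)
Your proof is correct and is exactly the argument the paper leaves implicit (the proposition is stated there without proof): finitarity reduces everything to finite premise lists, the empty case is definitionally identical in \eqref{equa:def S_K-2} and \eqref{equa:logic preserving degrees 2} (giving the same-theorems claim), and the nonempty case follows from Proposition \ref{prop:properties of m^n}(3) together with \eqref{equa:def S_K-1} and transitivity of $\leq$. Nothing to add.
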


Let $\K$ be a $\dn$-based class. From \eqref{equa:logic preserving degrees 1}, it is clear that for all $\ph,\psi\in\Fm$, 
\[
\ph\vdash_\K^{\leq}\psi \iff (\forall A\in\K)(\forall h\in\Hom(\Fm,A))(h\ph\leq h\psi).
\]
Thus, the following proposition is straightforward. Recall that for a logic $\S$, $\K_\S$ denotes the intrinsic variety of $\S$.

\begin{proposition}\label{prop:properties of Sl}
Let $\K$ be a $\dn$-based class. Then,
\begin{enumerate}[{\normalfont (1)}]	
	\item the logic $\Sl$ is selfextensional;
	\item $\mathbb{V}(\K)=\K_{\Sl}$;
	\item $\Sl$ satisfies properties \normalfont{(A1)-(A3)}.
\end{enumerate}
\end{proposition}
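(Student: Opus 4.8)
The plan is to exploit two facts recorded immediately above the statement: first, the single-premise description $\ph \vdash_{\K}^{\leq} \psi$ iff $h(\ph) \leq h(\psi)$ for every $A \in \K$ and every $h \in \Hom(\Fm, A)$; and second, that for each $A \in \K$ the $\{m\}$-reduct $\langle A, m^A\rangle$ is a distributive nearlattice, so in particular $\langle A, \vee^A\rangle$ is a join-semilattice and $m^A(a,b,c) = (a\vee c)\wedge_c(b\vee c)$ whenever the relevant meet exists (Theorem \ref{theo:characterisation DN}). Everything then reduces to elementary order-theoretic observations in distributive nearlattices plus the standard link between a variety, its free algebras and equational consequence.

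For item (1) I would observe that $(\ph,\psi)\in\Lambda(\Sl)$ iff $h(\ph)\leq h(\psi)$ and $h(\psi)\leq h(\ph)$ for all $A\in\K$ and all $h$, i.e.\ iff $h(\ph)=h(\psi)$ always, i.e.\ iff $\K\models\ph\approx\psi$, equivalently $\mathbb{V}(\K)\models\ph\approx\psi$. Since this relation equals $\bigcap\{\ker h : A\in\K,\ h\in\Hom(\Fm,A)\}$, an intersection of congruences on $\Fm$, it is a congruence on $\Fm$; hence $\Lambda(\Sl)$ is a congruence and $\Sl$ is selfextensional. For item (2) I would then use that, $\Sl$ being selfextensional, its Tarski congruence $\widetilde{\Omega}(\Sl)$ coincides with its Frege relation $\Lambda(\Sl)$, so $\Fm/\widetilde{\Omega}(\Sl)=\Fm/\Lambda(\Sl)$. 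By the description of $\Lambda(\Sl)$ from step (1), the quotient $\Fm/\Lambda(\Sl)$ is precisely the free algebra of the variety $\mathbb{V}(\K)$ on the denumerable generating set $\Var$; since a free algebra on infinitely many free generators generates its variety, $\K_{\Sl}=\mathbb{V}(\Fm/\Lambda(\Sl))=\mathbb{V}(\K)$.

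For item (3), each of (A1)--(A3) reduces, via the appropriate instance of \eqref{equa:logic preserving degrees 1}, to an identity valid in every distributive nearlattice. Property (A1) becomes the equivalence $h(\ph)\vee^A h(\psi)\leq h(\chi)$ iff ($h(\ph)\leq h(\chi)$ and $h(\psi)\leq h(\chi)$), which is just the defining property of the join; property (A2) becomes $m^A(a,b,c)\leq a\vee c$ and $m^A(a,b,c)\leq b\vee c$, immediate from $m^A(a,b,c)=(a\vee c)\wedge_c(b\vee c)$; and property (A3) becomes the statement that any $a$ lying below both $h(\ph)\vee h(\chi)$ and $h(\psi)\vee h(\chi)$ lies below their meet $(h(\ph)\vee h(\chi))\wedge(h(\psi)\vee h(\chi))=m^A(h(\ph),h(\psi),h(\chi))$, using that two elements with a common lower bound have a meet computed by $m^A$.

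All of this is routine, which is why the paper can call it straightforward; the only point that deserves a moment's care is the claim in step (2) that $\Fm/\Lambda(\Sl)$ genuinely is the free $\mathbb{V}(\K)$-algebra on $\Var$ and that this forces $\mathbb{V}(\Fm/\Lambda(\Sl))=\mathbb{V}(\K)$ --- but this is the standard correspondence between a variety, its free algebras, and equational consequence, so no real obstacle arises. Note finally that (A4) is deliberately missing from the statement: $\Sl$ in general fails (A4), which is exactly what separates it from $\S_\K$.
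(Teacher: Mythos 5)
Your proposal is correct and follows precisely the route the paper intends: the paper omits the proof as ``straightforward'' from the single-premise characterisation $\ph\vdash_\K^{\leq}\psi \iff h(\ph)\leq h(\psi)$ for all $A\in\K$ and $h\in\Hom(\Fm,A)$, and your argument fills in exactly those details (Frege relation as the intersection of kernels, $\Fm/\Lambda(\Sl)$ as the free algebra of $\V(\K)$ on denumerably many generators, and (A1)--(A3) as order-theoretic facts about $m^A(a,b,c)=(a\vee c)\wedge_c(b\vee c)$ in distributive nearlattices). No gaps; your closing remark about the deliberate omission of (A4) also matches the paper's Example \ref{exa:Sl not A4}.
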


\begin{example}\label{exa:Sl not A4}
The logic $\Sl$ does not satisfy property (A4), and thus the logics $\Sl$ and $\S_\K$ are different. Consider the language $\Ll:=\{m,\bot_1,\bot_2,\top\}$ of type $(3,0,0,0)$. Let $A$ be the distributive nearlattice given in Figure \ref{fig:not hold A4}, and such that $\bot_1^A=a$, $\bot_2^A=b$ and $\top^A=1$. Let $x$ be an arbitrary variable. Then, we have $\bot_1,\bot_2\vdash_A^{\leq}x$ but $m(\bot_1,\bot_2,x)\nvdash_A^{\leq}x$. Hence, $\Sl$ does not satisfy property (A4). 
\end{example}

\begin{figure}
\begin{tikzpicture}[scale=.5,inner sep=.5mm]
\node[bull] (left) at (-1.75,2.5) [label=below:$a$] {};%
\node[bull] (midle) at (0,2.5) [label=below:$b$] {};%
\node[bull] (right) at (1.75,2.5) [label=below:$c$] {};%
\node[bull] (top) at (0,5) [label=above:$1$] {};%

\draw (left) -- (top) -- (right) -- (top) -- (midle) ;%
\end{tikzpicture}
\caption{The distributive nearlattice of Example \ref{exa:Sl not A4}.}
\label{fig:not hold A4}
\end{figure}
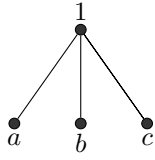

\begin{proposition}\label{prop:(A4) implies S_K=Sl}
Let $\K$ be a $\dn$-based class. If the logic $\Sl$ satisfies property \normalfont{(A4)}, then $\S_\K=\Sl$.
\end{proposition}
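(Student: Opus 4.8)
The plan is to establish the two inclusions $\vdash_\K\,\subseteq\,\vdash_\K^\leq$ and $\vdash_\K^\leq\,\subseteq\,\vdash_\K$ between the consequence relations. The first inclusion is exactly Proposition \ref{prop:Sl extension of SK}, so the only work is the reverse inclusion, and since both relations are finitary it suffices to argue at the level of finite premise sets (and the empty premise set is handled by the ``same theorems'' clause of Proposition \ref{prop:Sl extension of SK}). So assume $\ph_1,\dots,\ph_n\vdash_\K^\leq\ph$; we must show $\ph_1,\dots,\ph_n\vdash_\K\ph$, i.e., by \eqref{equa:def S_K-1}, that $m^{n}(h\ph_1,\dots,h\ph_n,h\ph)\leq h\ph$ for every $A\in\K$ and $h\in\Hom(\Fm,A)$.

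First I would unwind the hypothesis that $\Sl$ satisfies (A4): instantiating (A4) with the variables $x_1,\dots,x_n,x$ gives that $x_1,\dots,x_n\vdash_\K^\leq x$ implies $m^{n}(x_1,\dots,x_n,x)\vdash_\K^\leq x$. But we already have $x_1,\dots,x_n\vdash_\K^\leq x$? No — that is false in general; instead the right move is to use that $\Sl$ \emph{does} satisfy (A1)–(A3) by Proposition \ref{prop:properties of Sl}, together with (A4) as an assumed extra property, and then invoke Theorem \ref{theo:dn-term iff dn-based}: a logic satisfying (A1)–(A4) has $m$ as a $\dn$-term, and if moreover it is selfextensional (which $\Sl$ is, by Proposition \ref{prop:properties of Sl}(1)), it is $\dn$-based. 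Thus $\Sl$ is $\dn$-based relative to some variety, and by \eqref{equa:aux 2} that variety is $\K_{\Sl}=\V(\K)$ (Proposition \ref{prop:properties of Sl}(2)). Hence $\Sl$ is $\dn$-based relative to $\V(\K)$, and therefore also relative to $\K$ itself, by the argument on page \pageref{def:DN-based logic} showing that being $\dn$-based relative to $\K$ and relative to $\V(\K)$ are equivalent (via property (2) of Proposition \ref{prop:properties of m^n} and $\K\models m^{n}(\bar\ph,\ph)\approx\ph \iff \V(\K)\models m^{n}(\bar\ph,\ph)\approx\ph$).

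The conclusion is then immediate: both $\S_\K$ and $\Sl$ are $\dn$-based relative to $m$ and $\K$, so their finitary consequence relations on finite premise sets are given by the \emph{same} condition \eqref{equa:def DN-based} = \eqref{equa:def S_K-1}, namely $m^{n}(h\ph_1,\dots,h\ph_n,h\ph)\leq h\ph$ for all $A\in\K$, $h\in\Hom(\Fm,A)$; and on the empty premise set they agree by Proposition \ref{prop:Sl extension of SK}. Therefore $\vdash_\K\,=\,\vdash_\K^\leq$, i.e., $\S_\K=\Sl$.

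The main obstacle is the step identifying $\Sl$ as $\dn$-based: one must be careful that Theorem \ref{theo:dn-term iff dn-based} requires selfextensionality plus a $\dn$-term in the sense of Definition \ref{def:dn-term} (properties (A1)–(A4) as \emph{syntactic} rules of the logic), check that Proposition \ref{prop:properties of Sl}(3) supplies (A1)–(A3) and the hypothesis of this proposition supplies (A4), and then track which variety the resulting $\dn$-based structure sits over — this is where \eqref{equa:aux 2} and $\K_{\Sl}=\V(\K)$ are essential. Everything else is bookkeeping about finitariness and the empty-premise case.
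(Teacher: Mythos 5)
Your argument is correct, but it takes a different route from the paper's. You prove the missing inclusion $\vdash_\K^{\leq}\,\subseteq\,\vdash_\K$ structurally: since $\Sl$ is selfextensional and satisfies (A1)--(A3) (Proposition \ref{prop:properties of Sl}) and (A4) by hypothesis, $m$ is a $\dn$-term of $\Sl$, so Theorem \ref{theo:dn-term iff dn-based} makes $\Sl$ a $\dn$-based logic, whose unique variety is $\K_{\Sl}=\V(\K)$; since $\K$ and $\V(\K)$ validate the same equations, the defining condition of $\dn$-basedness for $\Sl$ over $\K$ coincides with \eqref{equa:def S_K-1}, and agreement on theorems plus finitarity finishes the job. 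The paper instead argues directly and more economically: given $\ph_0,\dots,\ph_n\vdash_\K^{\leq}\ph$, it applies (A4) \emph{to these formulas} (your aside about instantiating (A4) at distinct variables misses that (A4) is a schema over arbitrary formulas, so it can be applied to the assumed consecution itself) to get $m^{n}(\ph_0,\dots,\ph_n,\ph)\vdash_\K^{\leq}\ph$; single-premise consequence is literally the same for $\Sl$ and $\S_\K$ (both say $h\psi\leq h\ph$ over $\K$), so $m^{n}(\ph_0,\dots,\ph_n,\ph)\vdash_\K\ph$, and Proposition \ref{prop:consequence from (A1)-(A4)}(3) (available for $\S_\K$ via Proposition \ref{prop:dn-based to dn-term}) recovers $\ph_0,\dots,\ph_n\vdash_\K\ph$; the theorem case is again Proposition \ref{prop:Sl extension of SK}. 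Your route buys a conceptual explanation---once $\Sl$ satisfies (A4) it is $\dn$-based over the same variety as $\S_\K$, so the two can only differ on theorems, and they don't---at the cost of invoking the main characterisation theorem and the uniqueness of the associated variety, whereas the paper's proof is elementary, local, and needs no machinery beyond Section 3; just make sure, when invoking the equivalence between being $\dn$-based relative to $\K$ and relative to $\V(\K)$, that you state it as the two-line consequence of $\K\models\epsilon\iff\V(\K)\models\epsilon$ and Proposition \ref{prop:properties of m^n}(2), since the paper only spells out one direction.
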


\begin{proof}
We need only prove that $\Sl\leq\S_\K$. Let $\ph_0,\dots,\ph_n,\ph\in\Fm$. Assume that $\ph_0,\dots,\ph_n\vdash_\K^{\leq}\ph$. Thus, by (A4), $m^{n}(\ph_0,\dots,\ph_n,\ph)\vdash_\K^{\leq}\ph$. Then, it is straightforward to check that $m^{n}(\ph_0,\dots,\ph_n,\ph)\vdash_\K\ph$. Now, by condition (3) of Proposition \ref{prop:consequence from (A1)-(A4)}, it follows that $\ph_0,\dots,\ph_n\vdash_\K\ph$. Therefore, by Proposition \ref{prop:Sl extension of SK}, we obtain that $\Sl=\S_\K$.
\end{proof}

\begin{proposition}\label{prop:K=Alg(S_K)}
If $\K$ is a $\dn$-based variety, then 
\[
\K_{\Sl}=\K=\K_{\S_\K}=\Alg(\S_\K).
\]
\end{proposition}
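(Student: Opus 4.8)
The plan is to chain together results already established in the excerpt. We must prove, for a $\dn$-based variety $\K$, the string of equalities $\K_{\Sl}=\K=\K_{\S_\K}=\Alg(\S_\K)$. The last two equalities, $\K=\K_{\S_\K}=\Alg(\S_\K)$, are immediate: since $\K$ is a $\dn$-based variety, $\S_\K$ is $\dn$-based relative to $\K$ (by definition of $\S_\K$ via \eqref{equa:def S_K-1} and \eqref{equa:def S_K-2}), and by the discussion on page \pageref{AlgS_K=K} we already have $\K_{\S_\K}=\Alg(\S_\K)=\K$ (alternatively invoke Proposition \ref{prop:dn-based-->selfextensional} for $\K_{\S_\K}=\V(\S_\K)=\K$ and Theorem \ref{theo:AlgS variety} for $\Alg(\S_\K)=\K_{\S_\K}$). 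So the work is all in the first equality $\K_{\Sl}=\K$.

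For $\K_{\Sl}=\K$, the clean route is Proposition \ref{prop:properties of Sl}(2): for any $\dn$-based class $\K$ one has $\V(\K)=\K_{\Sl}$. Since here $\K$ is already a variety, $\V(\K)=\K$, hence $\K_{\Sl}=\K$. That is essentially the whole argument; the only thing to be careful about is that Proposition \ref{prop:properties of Sl} is stated for $\dn$-based classes, and we are applying it to the special case where the class happens to be a variety, which is legitimate.

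So the proof I would write is short: first note $\K=\V(\K)=\K_{\Sl}$ by Proposition \ref{prop:properties of Sl}(2) and the fact that $\K$ is a variety; then note that $\S_\K$ is $\dn$-based relative to $\K$ by construction, so by Proposition \ref{prop:dn-based-->selfextensional} we get $\K_{\S_\K}=\V(\S_\K)=\K$, and finally by Theorem \ref{theo:AlgS variety}(1) we get $\Alg(\S_\K)=\K_{\S_\K}=\K$. Concretely:

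\begin{proof}
Since $\K$ is a $\dn$-based variety, by Proposition \ref{prop:properties of Sl}(2) we have $\K_{\Sl}=\V(\K)=\K$. On the other hand, $\S_\K$ is $\dn$-based relative to $\K$ by \eqref{equa:def S_K-1} and \eqref{equa:def S_K-2}, so by Proposition \ref{prop:dn-based-->selfextensional} we obtain $\K_{\S_\K}=\V(\S_\K)=\K$, and by Theorem \ref{theo:AlgS variety}(1) we get $\Alg(\S_\K)=\K_{\S_\K}=\K$. Therefore $\K_{\Sl}=\K=\K_{\S_\K}=\Alg(\S_\K)$.
\end{proof}

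There is no real obstacle here: the statement is a bookkeeping consequence of the machinery built up in Section \ref{sec:DN-based logic} together with Proposition \ref{prop:properties of Sl}. If one wanted to avoid citing Proposition \ref{prop:properties of Sl}(2) and argue $\K_{\Sl}=\K$ directly, the mild point to watch would be showing both inclusions between the equational theories: $\K\models\ph\approx\psi$ iff $\ph\dashv\vdash_\K^{\leq}\psi$ (immediate from the characterisation of $\vdash_\K^{\leq}$ on single premises displayed just before Proposition \ref{prop:properties of Sl}) iff $\K_{\Sl}\models\ph\approx\psi$ (since $\Sl$ is selfextensional by Proposition \ref{prop:properties of Sl}(1)), and then use that $\K$ is a variety to conclude $\K=\K_{\Sl}$.
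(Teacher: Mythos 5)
Your proposal is correct and follows essentially the same route as the paper, whose proof simply cites Proposition \ref{prop:properties of Sl}, Proposition \ref{prop:K_S_K=K} and Theorem \ref{theo:AlgS variety}; your substitution of Proposition \ref{prop:dn-based-->selfextensional} together with the uniqueness of the $\dn$-based variety (the discussion on page \pageref{AlgS_K=K}) for the citation of Proposition \ref{prop:K_S_K=K} is just an unfolding of the same argument. No gaps.
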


\begin{proof}
It follows from Propositions \ref{prop:properties of Sl} and \ref{prop:K_S_K=K}, and Theorem \ref{theo:AlgS variety}.
\end{proof}

For the next results, we need the following concepts. Let $P$ be a partially ordered set. Let $X\subseteq P$. We set $X^{\ell}:=\{a\in P: a\leq x, \text{ for all } x\in X\}$ and $X^u:=\{a\in P: a\geq x, \text{ for all } x\in X\}$. A subset $F\subseteq P$ is said to be a \textit{Frink filter} (\cite{Fri54}) if for every finite $X\subseteq F$, we have $X^{\ell u}\subseteq F$. We denote by $\FiF(P)$ the collection of all Frink filters of $P$. It is easy to check that $\FiF(P)$ is an algebraic closure system on $P$.

Then, condition \eqref{equa:logic preserving degrees 1} can be written as
\begin{equation*}
\begin{split}
\ph_1,\dots,\ph_n\vdash_\K^\leq\ph \iff 	&(\forall A\in \K)(\forall h\in\Hom(\Fm,A))\\
										&h\ph\in\{h\ph_1,\dots,h\ph_n\}^{\ell u}.
\end{split}
\end{equation*}

The next lemma is straightforward, and thus we omit its proof.

\begin{lemma}\label{lem:FiF < Fi_S}
For every $\dn$-class $\K$ and every algebra $A\in\K$, $\FiF(A)\subseteq\Fi_{\Sl}(A)$.
\end{lemma}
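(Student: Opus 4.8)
The goal is to show that every Frink filter of an algebra $A\in\K$ (where $\K$ is a $\dn$-class) is an $\S_\K^{\leq}$-filter. The plan is to unwind the definition of $\S_\K^{\leq}$-filter: given $F\in\FiF(A)$, I must verify that for every $\ph_1,\dots,\ph_n,\ph$ with $\ph_1,\dots,\ph_n\vdash_\K^{\leq}\ph$ and every $h\in\Hom(\Fm,A)$ with $h[\{\ph_1,\dots,\ph_n\}]\subseteq F$, we have $h\ph\in F$; and separately, if $\emptyset\vdash_\K^{\leq}\ph$ then $h\ph\in F$ for every $h$ (so one must be slightly careful about whether $F$ is nonempty, but if $\emptyset\vdash_\K^{\leq}\ph$ then by \eqref{equa:logic preserving degrees 2} the interpretation $h\ph$ is the greatest element of $A$, which lies in every Frink filter — indeed $\emptyset^{\ell u}=A^{u}$ is the set of upper bounds of $A$, and since $\emptyset$ is a finite subset of $F$ we get $A^u\subseteq F$, provided $F\neq\emptyset$; and a Frink filter is nonempty by the convention that $X^{\ell u}\subseteq F$ applied to $X=\emptyset$ forces the top into $F$ once $A$ has a top, or one simply restricts to the intended nonempty case).

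\textbf{Key steps.} First I would reduce to the finitary core: since both $\vdash_\K^{\leq}$ and the notion of $\S$-filter are finitary, it suffices to handle premises $\ph_1,\dots,\ph_n$. Second, I would rewrite the hypothesis $\ph_1,\dots,\ph_n\vdash_\K^{\leq}\ph$ in the form established just before the lemma, namely $h\ph\in\{h\ph_1,\dots,h\ph_n\}^{\ell u}$ for every $A\in\K$ and every $h\in\Hom(\Fm,A)$. Third, fix $A\in\K$, $F\in\FiF(A)$, and $h\in\Hom(\Fm,A)$ with $h\ph_i\in F$ for all $i$. Then $X:=\{h\ph_1,\dots,h\ph_n\}$ is a finite subset of $F$, so by the defining property of Frink filters $X^{\ell u}\subseteq F$; combined with $h\ph\in X^{\ell u}$ this gives $h\ph\in F$, as required. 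The empty-premise case is handled by the remark above about $A^u$.

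\textbf{Main obstacle.} There is essentially no obstacle here — the statement is an immediate translation exercise once one notices that the partial-order formulation of $\vdash_\K^{\leq}$ is literally the Frink-filter closure condition restricted to the images of the premises, i.e.\ that $\{h\ph_1,\dots,h\ph_n\}^{\ell u}$ appearing in the consequence relation is exactly the set whose membership a Frink filter is required to be closed under. The only point demanding a line of care is the treatment of theorems (the $\emptyset\vdash_\K^{\leq}\ph$ clause) and the nonemptiness convention for Frink filters; this is why the authors describe the lemma as straightforward and omit the proof, and I would likewise keep the argument to a couple of sentences.
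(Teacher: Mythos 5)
Your proof is correct and is exactly the straightforward unwinding the paper has in mind when it omits the argument: the reformulation of $\ph_1,\dots,\ph_n\vdash_\K^{\leq}\ph$ as $h\ph\in\{h\ph_1,\dots,h\ph_n\}^{\ell u}$ makes membership of $h\ph$ in $F$ an instance of the Frink-filter closure condition, and finitarity reduces arbitrary $\Gamma$ to this case. The only superfluous bit is the hedge about $F\neq\emptyset$: the Frink condition applied to $X=\emptyset$ already yields $\emptyset^{\ell u}=A^{u}\subseteq F$, so the empty-premise clause is covered with no extra convention.
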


\begin{proposition}\label{prop:Alg(S_K)=Alg(S_K<)}
Let $\K$ be a $\dn$-based variety. Then, we have 
\[
\Alg(\Sl)=\K=\Alg(\S_\K).
\]
\end{proposition}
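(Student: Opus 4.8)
The plan is to prove the two equalities $\Alg(\Sl) = \K$ and $\K = \Alg(\S_\K)$ separately, with the second one essentially already in hand. Indeed, $\K = \Alg(\S_\K)$ is exactly the content of Proposition~\ref{prop:K=Alg(S_K)} (using that $\S_\K$ is $\dn$-based relative to the variety $\K$, hence $\Alg(\S_\K)=\K_{\S_\K}=\K$ by Theorem~\ref{theo:AlgS variety}). So the real work is to show $\Alg(\Sl)=\K$.

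For the inclusion $\Alg(\Sl)\subseteq\K$: by Lemma~\ref{lem:AlgS<K_S} we have $\Alg(\Sl)\subseteq\K_{\Sl}$, and by Proposition~\ref{prop:properties of Sl}(2) (or Proposition~\ref{prop:K=Alg(S_K)}) $\K_{\Sl}=\mathbb{V}(\K)=\K$ since $\K$ is a variety. Hence $\Alg(\Sl)\subseteq\K$ is immediate. For the reverse inclusion $\K\subseteq\Alg(\Sl)$, I would fix $A\in\K$ and exhibit a finitary closure operator $\C$ on $A$ such that $\langle A,\C\rangle$ is a \emph{reduced} g-model of $\Sl$; then $A\in\Alg(\GMod^*(\Sl))=\Alg(\Sl)$. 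The natural candidate is the closure operator associated with the Frink filters, i.e.\ $\C = \Fig_{\FiF(A)}$, or perhaps $\Fig_{\Sl}^A$ itself. First one checks $\langle A,\Fi_{\Sl}(A)\rangle$ (equivalently $\langle A,\FiF(A)\rangle$, in view of Lemma~\ref{lem:FiF < Fi_S} and whatever reverse containment holds in this setting) is a g-model of $\Sl$ — this follows from the very definition of $\S$-filter once one knows $\FiF(A)\subseteq\Fi_{\Sl}(A)$. The crux is then reducedness: I must show the Tarski congruence $\widetilde{\Omega}_A(\C)$ is the identity. Since the Frege relation of this g-matrix relates $a$ and $b$ exactly when they generate the same Frink filter, and since in a distributive nearlattice the principal Frink filter generated by $a$ is the upset $[a)$ (because $\{a\}^{\ell u}=[a)$), the Frege relation is already the identity: $[a)=[b)$ forces $a=b$. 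Thus the Tarski congruence, being below the Frege relation, is trivially the identity, and $\langle A,\C\rangle$ is reduced.

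The main obstacle I anticipate is pinning down \emph{which} closure operator makes $\langle A,\C\rangle$ a g-model of $\Sl$ while simultaneously being reduced, and verifying the g-model condition carefully. One has to confirm that for every $\dn$-derivation $\ph_1,\dots,\ph_n\vdash_\K^\leq\ph$ and every $h\in\Hom(\Fm,A)$, $h\ph$ lies in the Frink filter generated by $h\ph_1,\dots,h\ph_n$; but this is precisely the reformulation of \eqref{equa:logic preserving degrees 1} as $h\ph\in\{h\ph_1,\dots,h\ph_n\}^{\ell u}$ together with the characterization of generated Frink filters via finite $\ell u$-closures. The empty-premise case \eqref{equa:logic preserving degrees 2} corresponds to $h\ph$ being the top element, which lies in the least Frink filter when it exists. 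Care is also needed because $\Sl$ may or may not have theorems; but g-models and reducedness are insensitive to that, so no separate argument should be required. Once reducedness is established as above, we conclude $A\in\Alg(\Sl)$, giving $\K\subseteq\Alg(\Sl)$, and combining all inclusions yields $\Alg(\Sl)=\K=\Alg(\S_\K)$.
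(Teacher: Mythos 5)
Your proposal is correct and follows essentially the same route as the paper: the inclusion $\Alg(\Sl)\subseteq\K$ via Lemma \ref{lem:AlgS<K_S} and $\K_{\Sl}=\mathbb{V}(\K)=\K$, and the reverse inclusion by showing that for each $A\in\K$ the g-matrix $\langle A,\FiF(A)\rangle$ is a g-model of $\Sl$ (via Lemma \ref{lem:FiF < Fi_S}) whose Frege relation is the identity because principal upsets $[a)$ are Frink filters, so it is reduced and $A\in\Alg(\Sl)$; the equality $\K=\Alg(\S_\K)$ is then quoted from Proposition \ref{prop:K=Alg(S_K)} exactly as in the paper. The hesitations you flag (which closure system to use, the empty-premise case) are resolved exactly as you suggest, and no reverse containment $\Fi_{\Sl}(A)\subseteq\FiF(A)$ is needed.
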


\begin{proof}
By Lemma \ref{lem:AlgS<K_S}, we know that $\Alg(\Sl)\subseteq \K_{\Sl}=\K$. Recall that $A\in\Alg(\Sl)$ if and only if there is an algebraic closure system $\Cs$ on $A$ such that $\langle A,\Cs\rangle\in\GMod^*(\Sl)$. Let $A\in\K$. Since $\FiF(A)$ is an algebraic closure system and $\FiF(A)\subseteq\Fi_{\Sl}(A)$, it follows that $\langle A,\FiF(A)\rangle\in\GMod(\Sl)$. 

Notice that the upsets $[a)=\{x\in A: a\leq x\}$ are Frink filters of $A$. So, we have $\langle a,b\rangle\in\Lambda_A(\FiF(A))$ if and only if $a=b$. Then, the Frege relation $\Lambda_A(\FiF(A))$ of the g-model $\langle A,\FiF(A)\rangle$ is the identity relation. Hence, $\langle A,\FiF(A)\rangle\in\GMod^*(\Sl)$; and thus, $A\in\Alg(\Sl)$. Therefore, by Proposition \ref{prop:K=Alg(S_K)}, $\Alg(\Sl)=\K=\Alg(\S_\K)$.
\end{proof}

The following corollary is a consequence of Lemma \ref{lem:FiF < Fi_S}.

\begin{corollary}
Let $\K$ be a $\dn$-based variety. Then, the logic $\Sl$ is fully selfextensional.
\end{corollary}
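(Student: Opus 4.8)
The plan is to deduce the statement directly from the definition of fully selfextensional logic (Definition~\ref{def:charac fully selfextensional}) and from the description of $\Alg(\Sl)$ obtained in Proposition~\ref{prop:Alg(S_K)=Alg(S_K<)}. Recall that $\Sl$ is fully selfextensional precisely when, for every $A\in\Alg(\Sl)$, the Frege relation $\Lambda_A(\Fi_{\Sl}(A))$ of the g-matrix $\langle A,\Fi_{\Sl}(A)\rangle$ is the identity relation. So the whole task reduces to computing this Frege relation on an arbitrary $A\in\Alg(\Sl)$.

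First I would fix $A\in\Alg(\Sl)$. By Proposition~\ref{prop:Alg(S_K)=Alg(S_K<)} we have $A\in\K$, so $A$ carries its distributive nearlattice structure and, in particular, the principal upsets $[a)=\{x\in A: a\leq x\}$ make sense. The key observation—already used in the proof of Proposition~\ref{prop:Alg(S_K)=Alg(S_K<)}—is that each $[a)$ is a Frink filter of $A$, hence by Lemma~\ref{lem:FiF < Fi_S} it is an $\Sl$-filter of $A$; in fact $[a)=\Fig_{\Sl}^A(a)$, since $[a)$ is the least upset containing $a$ and every $\Sl$-filter containing $a$ must contain all of $[a)$ (as $x\vdash_\K^\leq y$ whenever $x\leq y$, by the remark preceding Proposition~\ref{prop:properties of Sl}). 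Therefore, for $a,b\in A$,
\[
\langle a,b\rangle\in\Lambda_A(\Fi_{\Sl}(A))\iff \Fig_{\Sl}^A(a)=\Fig_{\Sl}^A(b)\iff [a)=[b)\iff a=b,
\]
the last equivalence because the partial order of a poset is recovered from its principal upsets. Hence $\Lambda_A(\Fi_{\Sl}(A))=\Id_A$.

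Since $A\in\Alg(\Sl)$ was arbitrary, this shows that $\Sl$ is fully selfextensional, which is the claim. The only mild subtlety—what I would regard as the main point to get right—is the identification $\Fig_{\Sl}^A(a)=[a)$: the inclusion $[a)\subseteq\Fig_{\Sl}^A(a)$ uses that $a\vdash_\K^\leq x$ for all $x\geq a$ together with closure of $\Sl$-filters, while the reverse inclusion uses Lemma~\ref{lem:FiF < Fi_S} to see that $[a)$ is itself an $\Sl$-filter. Everything else is a routine unwinding of definitions, so the corollary follows immediately.
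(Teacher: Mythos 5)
Your argument is correct and is essentially the paper's own proof: the paper derives the corollary from Lemma~\ref{lem:FiF < Fi_S} together with the facts established in (the proof of) Proposition~\ref{prop:Alg(S_K)=Alg(S_K<)} that $\Alg(\Sl)=\K$ and that the principal upsets $[a)$ are Frink filters, hence $\Sl$-filters, so they separate points and force $\Lambda_A(\Fi_{\Sl}(A))=\Id_A$; your identification $\Fig_{\Sl}^A(a)=[a)$ is just a slightly more explicit version of the same step. One cosmetic remark: the phrase ``$x\vdash_\K^\leq y$ whenever $x\leq y$'' conflates algebra elements with formulas; what you actually use (and what does hold, e.g.\ via $x\vdash_\K^\leq x\vee y$ interpreted with $h(x)=a$ and $h(y)=b$) is that every $\Sl$-filter is an upset.
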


Let $\K$ be a $\dn$-based class and $A\in\K$. Since $\FiF(A)$ is a closure system, $\FiF(A)$ is a lattice. Let us show that the distributivity of the lattices $\FiF(A)$ is a sufficient condition for  logics $\S_\K$ and $\Sl$ coincide.

\begin{proposition}\label{prop:FIF distr implies S_K=Sl}
Let $\K$ be a $\dn$-based variety. If for every $A\in\K$ the lattice $\FiF(A)$ is distributive, then $\S_\K=\Sl$.
\end{proposition}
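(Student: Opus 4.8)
The plan is to show $\Sl\leq\S_\K$, since the reverse inequality $\S_\K\leq\Sl$ is Proposition \ref{prop:Sl extension of SK}. By Proposition \ref{prop:(A4) implies S_K=Sl}, it suffices to prove that under the distributivity hypothesis the logic $\Sl$ satisfies property (A4); that is, whenever $\ph_0,\dots,\ph_n\vdash_\K^\leq\ph$, we must derive $m^n(\ph_0,\dots,\ph_n,\ph)\vdash_\K^\leq\ph$. Unwinding the definition \eqref{equa:logic preserving degrees 1}, the hypothesis says that for every $A\in\K$ and every $h\in\Hom(\Fm,A)$ we have $h\ph\in\{h\ph_0,\dots,h\ph_n\}^{\ell u}$, i.e. $h\ph$ lies in the Frink filter generated by $h\ph_0,\dots,h\ph_n$; we must conclude that $h\ph\geq m^n(h\ph_0,\dots,h\ph_n,h\ph)$, i.e. that $h\ph\in [m^n(h\ph_0,\dots,h\ph_n,h\ph))$.

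First I would fix $A\in\K$ and elements $a_0,\dots,a_n,b\in A$ with $b\in\{a_0,\dots,a_n\}^{\ell u}$, and aim to show $m^n(a_0,\dots,a_n,b)\leq b$. The key observation is that the element $m^n(a_0,\dots,a_n,b)=(a_0\vee b)\wedge_b\dots\wedge_b(a_n\vee b)$ (Proposition \ref{prop:properties of m^n}(1)) is the meet, in the distributive lattice $[b)$, of the principal filters $[a_i\vee b)$ of $[b)$; equivalently it generates the lattice filter $\bigvee_i [a_i\vee b)$ of $[b)$. The heart of the argument is to identify this with a Frink filter computation: I would show that $b\in\{a_0,\dots,a_n\}^{\ell u}$ forces $m^n(a_0,\dots,a_n,b)$ to already lie below $b$, using that $\FiF(A)$ is distributive. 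Concretely, consider the Frink filters $G_i:=\Fig_A(a_i)=[a_i)$ generated by the single elements $a_i$; since $b\geq a_i$ is false in general we pass to $[a_i\vee b)$, but note $\{a_0,\dots,a_n\}^{\ell u}=\{a_0\vee b,\dots,a_n\vee b, b\}^{\ell u}$ is false too — rather, the right move is: every lower bound of $\{a_0,\dots,a_n\}$ is a lower bound of each $a_i$, hence $b$, being in the double-polar, sits in the Frink filter $\bigvee_i [a_i)$ computed in $\FiF(A)$.

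The main obstacle, and where distributivity of $\FiF(A)$ enters, is the following: in a general nearlattice the Frink-filter join $\bigvee_i [a_i)$ can be strictly larger than the set $\{c\in A : c\geq m^k(a_{i_0},\dots,a_{i_k},c)\text{ for some finite subfamily}\}$, so membership of $b$ in the double-polar does not immediately yield $m^n(a_0,\dots,a_n,b)\leq b$. What I expect to work is to use distributivity of $\FiF(A)$ to show that the Frink filter generated by a finite set $\{a_0,\dots,a_n\}$ coincides with $\{c\in A : m^n(a_0,\dots,a_n,c)\leq c\}$ — i.e. that finitely generated Frink filters are ``principal up to the $m^n$ description'' exactly when the lattice $\FiF(A)$ is distributive. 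Granting this identification, $b\in\{a_0,\dots,a_n\}^{\ell u}=\Fig^{\FiF}_A(a_0,\dots,a_n)$ gives $m^n(a_0,\dots,a_n,b)\leq b$ directly, so $m^n(\ph_0,\dots,\ph_n,\ph)\vdash_\K^\leq\ph$, establishing (A4) for $\Sl$ and hence, by Proposition \ref{prop:(A4) implies S_K=Sl}, the equality $\S_\K=\Sl$.

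Alternatively, and perhaps more cleanly, I would argue contrapositively and filter-theoretically: if $m^n(a_0,\dots,a_n,b)\not\leq b$ for some $A\in\K$ and some witnesses, then in the distributive lattice $\FiF(A)$ one can separate the Frink filter $\bigvee_i[a_i)$ from $b$ by a prime Frink filter $P$ (prime filters exist and separate in distributive lattices, and $\FiF(A)$ is assumed distributive), i.e. $a_0,\dots,a_n\in P$ but $b\notin P$; since every Frink filter of $A$ is an $\Sl$-filter (Lemma \ref{lem:FiF < Fi_S}), evaluating the identity homomorphism on the g-matrix $\langle A,P\rangle$ shows $\ph_0,\dots,\ph_n\not\vdash_\K^\leq\ph$ under the corresponding assignment — contradicting the hypothesis. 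Either route reduces the proposition to (A4) for $\Sl$, and the single genuinely nontrivial point is extracting from distributivity of $\FiF(A)$ the fact that finitely generated Frink filters are captured by the $m^n$ terms; I would spend the bulk of the write-up on that lemma.
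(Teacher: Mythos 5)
Your main route is sound and it genuinely differs from the paper's at the top level: you reduce the statement to showing that $\Sl$ satisfies (A4) and then invoke Proposition \ref{prop:(A4) implies S_K=Sl}, and you correctly reduce (A4) to the pointwise claim that, in every $A\in\K$, membership $b\in\{a_0,\dots,a_n\}^{\ell u}$ (the Frink filter generated by $a_0,\dots,a_n$) forces $m^{n}(a_0,\dots,a_n,b)\leq b$, i.e.\ that the finitely generated Frink filter coincides with the nearlattice filter $\Fig_A(a_0,\dots,a_n)=\{c\in A: m^{n}(a_0,\dots,a_n,c)\leq c\}$ (Proposition \ref{prop:properties of m^n}(7)). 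Be aware, however, that the ``single genuinely nontrivial point'' you defer to a lemma is exactly what the paper does not prove either, but imports by citation: distributivity of $\FiF(A)$ implies $\FiF(A)=\Fi(A)$ (\cite[Proposition 4.3]{Go17a}). Granting that fact, the paper finishes differently: it shows $\Fi_{\S_\K}(A)\setminus\{\emptyset\}=\Fi(A)=\FiF(A)\subseteq\Fi_{\Sl}(A)\subseteq\Fi_{\S_\K}(A)$ for each $A\in\K$ (via Proposition \ref{prop:Fi_S(A)=Fi(A)} and Lemma \ref{lem:FiF < Fi_S}), and then concludes $\S_\K=\Sl$ from completeness of each logic with respect to the g-models $\langle A,\Fi_\S(A)\rangle$ with $A\in\Alg(\S)$, together with $\Alg(\Sl)=\K=\Alg(\S_\K)$ (Proposition \ref{prop:Alg(S_K)=Alg(S_K<)}). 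So both arguments hinge on the same filter-theoretic lemma; yours buys a more syntactic finish through (A4), the paper's a more model-theoretic one, and neither avoids the fact that distributivity of $\FiF(A)$ collapses Frink filters to nearlattice filters --- if you write your version up, that is where essentially all the work (or a citation to \cite{Go17a}) must go.

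One caution about your ``alternative, more cleanly'' contrapositive route: as sketched it is circular. The existence of a Frink filter $P$ with $a_0,\dots,a_n\in P$ and $b\notin P$ is equivalent to $b\notin\{a_0,\dots,a_n\}^{\ell u}$ (one may simply take $P=\{a_0,\dots,a_n\}^{\ell u}$; primality, and distributivity of the lattice $\FiF(A)$ as a separation device, play no role here), whereas what you actually know from $m^{n}(a_0,\dots,a_n,b)\not\leq b$ is only $b\notin\Fig_A(a_0,\dots,a_n)$. Bridging these two sets is again precisely the lemma $\FiF(A)=\Fi(A)$, so this variant does not bypass it.
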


\begin{proof}
Assume that for each $A\in\K$, the lattice $\FiF(A)$ is distributive. Since the logic $\Sl$ is an extension of the logic $\S_\K$, we have $\Fi_{\Sl}(A)\subseteq\Fi_{\S_\K}(A)$ for every algebra $A$. Let now $A\in\K$. Since the lattice $\FiF(A)$ is distributive, it follows that $\FiF(A)=\Fi(A)$ (see \cite[Proposition 4.3]{Go17a}). Then, by Proposition \ref{prop:Fi_S(A)=Fi(A)} and Lemma \ref{lem:FiF < Fi_S}, it follows that 
\[
\Fi_{\S_\K}(A)\setminus\{\emptyset\}=\Fi(A)=\FiF(A)\subseteq\Fi_{\Sl}(A).
\]
Hence, since $\S_\K$ and $\Sl$ have the same theorems, we obtain $\Fi_{\S_\K}(A)=\Fi_{\Sl}(A)$ for all $A\in\K$. Now, notice that every sentential logic $\S$ is complete with respect to the class of g-models $\{\langle A,\Fi_\S(A)\rangle: A\in\Alg(\S)\}$. Thus, by Proposition \ref{prop:Alg(S_K)=Alg(S_K<)}, we obtain that $\S_\K=\Sl$.
\end{proof}

\subsection{The truth-preserving logic}\label{subsec:truth-preserving logic}

We will say that a class of algebras $\K$ is $\dn^1$-\textit{based} if it is $\dn$-based and every algebra $A$ in $\K$ has a greatest element, that is, there is $1^A\in A$ such that $a\leq 1^A$ for all $a\in A$.  Notice that if $\S$ is a selfextensional logic with a $\dn$-term and with theorems, then $\Alg(\S)$ is a $\dn^1$-based variety and for every $A\in\Alg(\S)$, $1^A=h\ph$ for any theorem $\ph$ and any interpretation $h$ on $A$. Moreover, since $\S$ has theorem, it follows by Proposition \ref{prop:K_S_K=K} that $\S=\S_{\Alg(\S)}$. From now on, unless otherwise stated, $\K$ will denote a $\dn^1$-based variety.

\begin{definition}
The \textit{truth-preserving logic associated with} $\K$, $\S_\K^1=\langle\Fm,\vdash_\K^1\rangle$, is defined as follows: let $\Gamma\cup\{\ph\}\subseteq\Fm$ be finite,
\begin{equation}
\begin{split}
\Gamma\vdash_\K^1\ph\iff &(\forall A\in \K)(\forall h\in\Hom(\Fm,A))\\
									&(h\psi=1^A \text{ for all } \psi\in\Gamma \implies h\ph=1^A).
\end{split}
\end{equation}
For an arbitrary $\Gamma\subseteq\Fm$, $\Gamma\vdash_\K^1\ph$ if and only if there is a finite $\Gamma_0\subseteq\Gamma$ such that $\Gamma_0\vdash_\K^1\ph$.
\end{definition}

\begin{proposition}\label{prop:S1 is an extension}
The logic $\S_\K^1$ is an extension of $\Sl$, and hence it is an extension of the logic $\S_\K$. Moreover, logics $\S_\K$, $\Sl$ and $\S_\K^1$ have the same theorems.
\end{proposition}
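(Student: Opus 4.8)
The plan is to verify the three claimed containments — that $\S_\K^1$ extends $\Sl$, that $\Sl$ extends $\S_\K$, and that all three logics have the same theorems — and the middle one is already available. Indeed, $\Sl$ extends $\S_\K$ by Proposition \ref{prop:Sl extension of SK}, and that same proposition gives that $\S_\K$ and $\Sl$ have the same theorems. So the real work is to show $\Sl\leq\S_\K^1$ and that $\S_\K^1$ has no new theorems beyond those of $\S_\K$; the extension part then follows by transitivity, $\S_\K\leq\Sl\leq\S_\K^1$.

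First I would prove $\Sl\leq\S_\K^1$. Take $\ph_1,\dots,\ph_n,\ph\in\Fm$ with $\ph_1,\dots,\ph_n\vdash_\K^{\leq}\ph$; I must show $\ph_1,\dots,\ph_n\vdash_\K^1\ph$. Fix $A\in\K$ and $h\in\Hom(\Fm,A)$ with $h\psi_i=1^A$ for $i=1,\dots,n$. Since $A$ has greatest element $1^A$, we have $a\leq 1^A=h\ph_i$ for every $a\in A$ and every $i$; in particular $1^A\leq h\ph_i$ for all $i$. Applying the definition \eqref{equa:logic preserving degrees 1} of $\vdash_\K^{\leq}$ with the element $a:=1^A$ yields $1^A\leq h\ph$, hence $h\ph=1^A$. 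The empty-premise case \eqref{equa:logic preserving degrees 2} gives that every $\Sl$-theorem $\ph$ satisfies $a\leq h\ph$ for all $a$, so again $h\ph=1^A$, i.e. $\ph$ is an $\S_\K^1$-theorem. Finally, passing from finite to arbitrary premise sets is immediate from the finitary clause \eqref{equa:logic preserving degrees 3} and the analogous clause in the definition of $\S_\K^1$. This shows $\S_\K^1$ extends $\Sl$, and combined with $\S_\K\leq\Sl$ it extends $\S_\K$ as well.

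It remains to see that $\S_\K^1$ has the same theorems as $\S_\K$ (equivalently, as $\Sl$). One inclusion is the extension just proved: every $\S_\K$-theorem is an $\S_\K^1$-theorem. For the converse, suppose $\emptyset\vdash_\K^1\ph$, so that for every $A\in\K$ and every $h\in\Hom(\Fm,A)$ we have $h\ph=1^A$. I would argue that $\ph$ is already a theorem of $\S_\K$ via \eqref{equa:def S_K-2}, i.e. that $a\leq h\ph$ for all $a\in A$: this is clear since $h\ph=1^A$ is the greatest element of $A$. Hence $\emptyset\vdash_\K\ph$, and also $\emptyset\vdash_\K^{\leq}\ph$ by Proposition \ref{prop:Sl extension of SK}. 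Therefore the three logics $\S_\K$, $\Sl$, $\S_\K^1$ share exactly the same theorems.

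The only point requiring a little care — the "main obstacle", though it is mild — is the interplay between the two defining clauses of each logic (the one for nonempty finite premise sets and the one for the empty premise set) and the finitary extension to arbitrary $\Gamma$; one must handle the empty-premise case separately rather than as a degenerate instance of the general inequality, since $\{h\ph_1,\dots,h\ph_n\}^{\ell}$ collapses to all of $A$ when $n=0$. Once that bookkeeping is in place the argument is entirely routine, resting only on the single fact that $1^A$ is the top element of the $\{m\}$-reduct distributive nearlattice of each $A\in\K$.
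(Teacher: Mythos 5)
Your proposal is correct and is exactly the routine verification the paper has in mind (it omits the proof as immediate from the definitions): instantiating the $\vdash_\K^{\leq}$ clause at $a=1^A$ gives $\Sl\leq\S_\K^1$, and the top-element property of $1^A$ makes the theoremhood conditions of $\S_\K$, $\Sl$ and $\S_\K^1$ coincide. Your handling of the empty-premise clauses and the finitary extension to arbitrary $\Gamma$ is the right bookkeeping, so nothing is missing.
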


Let $A$ be an algebra and $F\subseteq A$. The \textit{Leibniz congruence of} $F$ \textit{relative to} $A$, denoted by $\Omega_AF$, is the greatest congruence of $A$ compatible with $F$, that is, it does not relate elements in $F$ with elements not in $F$. The mapping $F\mapsto\Omega_AF$ is called the \textit{Leibniz operator} of the algebra $A$ and it is denoted by $\Omega_A$. This operator is an essential tool in Abstract Algebraic Logic for classifying sentential logics.  The structure of this classification of sentential logics is called the \textit{Leibniz hierarchy}. We address the reader to \cite{Cz01,FoJaPi03,Fo16} for more information on this hierarchy.

\begin{proposition}\label{prop:S_K=S_K^1}
If for every $A\in\K$ and every nonempty and finite $X\subseteq A$ we have $\Fig_A(X)=\{a\in A: \langle a,1^A\rangle\in\Omega_A\Fig_A(X)\}$, then $\S_\K=\S_\K^1$.
\end{proposition}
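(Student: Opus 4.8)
The plan is to use Proposition~\ref{prop:S1 is an extension}, which already gives that $\S_\K^1$ extends $\S_\K$ and that the two logics have the same theorems; what remains is the reverse inclusion on nonempty premise sets, i.e.\ to show that $\ph_0,\dots,\ph_n\vdash_\K^1\ph$ implies $\ph_0,\dots,\ph_n\vdash_\K\ph$ (the empty-premise case is the coincidence of theorems, and the general case follows from this by finitarity of both logics). So I would fix such formulas, an arbitrary $A\in\K$ and an arbitrary $h\in\Hom(\Fm,A)$, and first reduce the goal: by \eqref{equa:def S_K-1} it suffices to prove $m^n(h\ph_0,\dots,h\ph_n,h\ph)\le h\ph$, and by Proposition~\ref{prop:properties of m^n}(7) this holds as soon as $h\ph$ belongs to the nearlattice filter $F:=\Fig_A(h\ph_0,\dots,h\ph_n)$ of $\langle A,m^A\rangle$ (then $m^n(h\ph_0,\dots,h\ph_n,h\ph)=h\ph$). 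Thus the whole problem is reduced to showing $h\ph\in F$.

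Next I would pass to the Leibniz quotient. Put $\theta:=\Omega_AF$, $B:=A/\theta$ and let $\pi\colon A\to B$ be the canonical projection. Since $\K$ is a variety, $B\in\K$, so $B$ again has a greatest element $1^B$; and because the nearlattice order is equationally definable ($x\le y\iff m(x,x,y)=y$), the surjection $\pi$ is order-preserving, so for every $b=\pi(a)\in B$ we have $b\le\pi(1^A)$, whence $\pi(1^A)$ is the top of $B$, that is, $\pi(1^A)=1^B$. Now I would invoke the hypothesis for the finite nonempty set $X=\{h\ph_0,\dots,h\ph_n\}$: each $h\ph_i$ lies in $F=\Fig_A(X)$, hence $\langle h\ph_i,1^A\rangle\in\theta$, hence $\pi(h\ph_i)=\pi(1^A)=1^B$. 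Therefore $g:=\pi\circ h\in\Hom(\Fm,B)$ sends every $\ph_i$ to $1^B$, and the assumption $\ph_0,\dots,\ph_n\vdash_\K^1\ph$ applied to $B\in\K$ and $g$ yields $g\ph=1^B=\pi(1^A)$, i.e.\ $\langle h\ph,1^A\rangle\in\theta=\Omega_AF$. Applying the hypothesis once more, now in the direction $\{a:\langle a,1^A\rangle\in\Omega_AF\}\subseteq F$, gives $h\ph\in F$, which is exactly what the reduction above demanded; as $A$ and $h$ were arbitrary, $\ph_0,\dots,\ph_n\vdash_\K\ph$ follows.

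The step I expect to carry the real content is the passage to $B=A/\Omega_AF$ together with the correct reading of the hypothesis: it says precisely that $F$ is the single $\Omega_AF$-class of $1^A$, so the Leibniz quotient collapses all of $F$ onto the top of $B$, which is exactly what allows the truth-preserving consequence to be instantiated at the concrete homomorphism $\pi\circ h$. The only genuinely technical point is checking $\pi(1^A)=1^B$, and this is routine: it uses only surjectivity and order-preservation of $\pi$ and the fact that, $\K$ being $\dn^1$-based, $B$ has a greatest element at all. Everything else is bookkeeping with the definitions of $\vdash_\K$, $\vdash_\K^1$, $\Fig_A$ and $\Omega_A$.
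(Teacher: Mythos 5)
Your argument is correct and takes essentially the same route as the paper's own proof: both reduce the problem to showing $h\ph\in F=\Fig_A(h\ph_0,\dots,h\ph_n)$ (so that Proposition \ref{prop:properties of m^n} gives $m^{n}(h\ph_0,\dots,h\ph_n,h\ph)=h\ph$), pass to the Leibniz quotient $A/\Omega_AF$, which lies in $\K$ because $\K$ is a variety, read the hypothesis as saying that $F$ is exactly the $\Omega_AF$-class of $1^A$, and then instantiate $\vdash_\K^1$ at the homomorphism $\pi\circ h$ to conclude $h\ph\in F$. The only difference is cosmetic: you make explicit the routine verification that $\pi(1^A)$ is the greatest element of the quotient, which the paper leaves implicit.
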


\begin{proof}
By Proposition \ref{prop:S1 is an extension}, we only need to prove that for all $\ph_0,\dots,\ph_n,\ph\in\Fm$, $\ph_0,\dots,\ph_n\vdash_\K^1\ph$ implies $\ph_0,\dots,\ph_n\vdash_\K\ph$. So, assume that $\ph_0,\dots,\ph_n\vdash_\K^1\ph$. Let $A\in\K$ and $h\in\Hom(\Fm,A)$. We set $F:=\Fig_A(h\ph_0,\dots,h\ph_n)$. Since $\K$ is a variety and $\Omega_A F$ is a congruence on $A$, it follows that $A/\Omega_AF\in\K$. By hypothesis, we have that $F=1^A/\Omega_AF=1^{A/\Omega_AF}$.  Let now $\widehat{h}:=\pi\circ h\colon\Fm\to A/\Omega_AF$, where $\pi\colon A\to A/\Omega_AF$ is the natural map. Thus, $\widehat{h}\ph_i=h\ph_i/\Omega_AF=1^A/\Omega_AF=1^{A/\Omega_AF}$ for all $i=0,1,\dots,n$. Then, $\widehat{h}\ph=1^{A/\Omega_AF}$.  Hence $h\ph\in F=\Fig_A(h\ph_0,\dots,h\ph_n)$. Thus, by Proposition \ref{prop:properties of m^n}, we obtain that $h\ph=m^{n}(h\ph_0,\dots,h\ph_n,h\ph)$. Hence, $\ph_0,\dots,\ph_n\vdash_\K\ph$.
\end{proof}


One of the most important and large classes of sentential logics under the point of view of Abstract Algebraic Logic is the class of \textit{protoalgebraic logics}. This class of logics was introduced and studied by Blok and Pigozzi \cite{BloPi86}, and independently it was considered by Czelakowski \cite{Cz85}. There are several useful characterizations of the notion of protoalgebraibility. For our purposes, we choose the following as the definition of protoalgebraic logic.

\begin{definition}
A sentential logic $\S$ is said to be \textit{protoalgebraic} if there is a set of formulas in two variables $\Delta(x,y)$ such that $\emptyset\vdash_\S\Delta(x,x)$ and $x,\Delta(x,y)\vdash_\S y$. A set with these two properties will be called a \textit{set of protoimplication formulas} for $\S$.
\end{definition}
 
Algebraizable logics, finitely algebraizable logics and regularly algebraizable logics are important classes of protoalgebraic logics, see for instance \cite{BloPi89,Cz01,FoJaPi03,Fo16}. A sentential logic $\S$ is \textit{algebraizable} if and only if (i) there is a set of formulas in two variables $\Delta(x,y)$ such that for each algebra $A$ and each $\S$-filter $F$ of $A$, $ \Omega_AF=\{\langle a,b\rangle\in A^2: \Delta^A(a,b)\subseteq F\}$, and (ii) there is a set of equations $\tau(x)$ in one variable such that for every algebra $A$ and every $\S$-filter $F$ of $A$ with $\Omega_AF$ being the identity relation, $F=\{a\in A: A\models\tau(x)[a]\}$. The set $\Delta(x,y)$ is called a set of \textit{equivalence formulas} for $\S$. It follows that every set of equivalence formulas is a set of protoimplication formulas.

A logic $\S$ is said to be \textit{finitely algebraizable} when it is algebraizable and the sets $\Delta(x,y)$ and $\tau(x)$ are finite. A logic $\S$ is said to be \textit{regularly algebraizable} if it is finitely algebraizable and for every set of equivalence formulas $\Delta(x,y)$ the G-rule $x,y\vdash_\S\Delta(x,y)$ is satisfied.

A quasivariety $\K$ is said to be \textit{pointed} if there is a term $\ph(x_1,\dots,x_n)$ with the property that $\ph(x_1,\dots,x_n)\approx\ph(y_1,\dots,y_n)$ is valid in $\K$ for all variables $y_1,\dots,y_n$. Such a term is called a \textit{constant term} since it behaves like a constant. Once we fix a constant term, we will denote it by 1; we will say that $\K$ is \textit{1-pointed} and we will use $1^A$ to refer the interpretation of the constant term 1 in $A$, for each $A\in\K$. Notice that if $\S$ is a selfextensional logic with a $\dn$-term and with theorems, then the variety $\Alg(\S)$ is 1-pointed, where the constant term is any theorem of $\S$, and for each $A\in\Alg(\S)$, $1^A$ is the greatest element of $\langle A,m^A\rangle$.

A 1-pointed quasivariety $\K$ is said to be \textit{relatively point regular} when for every $A\in\K$ and every $\theta,\theta'\in\Con_\K(A)$, if $1^A/\theta=1^A/\theta'$, then $\theta=\theta'$. If $\K$ is a variety, then  we  say simply that $\K$ is \textit{point regular}, since $\Con_\K(A)=\Con(A)$ for every $A\in\K$.

\begin{theorem}[\cite{CzPi04}]
A logic $\S$ is regularly algebraizable if and only if $\Alg(\S)$ is 1-pointed and relatively point regular quasivariety and $\S=\S_{\Alg(\S)}^1$.
\end{theorem}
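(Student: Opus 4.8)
The statement is due to Czelakowski and Pigozzi; I would derive it from the standard correspondence, for algebraizable logics, between $\S$-filters and relative congruences given by the Leibniz operator $\Omega$. \emph{Left to right.} I would assume $\S$ regularly algebraizable, put $\K:=\Alg(\S)$ (a quasivariety, the equivalent quasivariety semantics of $\S$), and fix a finite set $\Delta(x,y)$ of equivalence formulas, a finite set $\tau(x)$ of defining equations, and the regularity rule $x,y\vdash_\S\Delta(x,y)$. First I would note that $\S$ has theorems (from $\emptyset\vdash_\S\Delta(x,x)$) and that any theorem $\ph$ is a constant term over $\K$: from $\emptyset\vdash_\S\ph$, $\emptyset\vdash_\S\sigma(\ph)$ for every substitution $\sigma$, and the regularity rule, one gets $\emptyset\vdash_\S\Delta(\ph,\sigma(\ph))$, so $h\ph=h\sigma(\ph)$ in every reduced model; hence $\ph^A$ is constant on each $A\in\K$ and $\K$ is $1$-pointed with $1^A=h\ph$. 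The crucial step is the identity $F=\{a\in A:\langle a,1^A\rangle\in\Omega_AF\}$, valid for every $A\in\K$ and every $F\in\Fi_\S(A)$: the inclusion $\supseteq$ holds because $1^A\in F$ and $\Omega_AF$ is compatible with $F$, and $\subseteq$ holds because $a\in F$ forces $\Delta^A(a,1^A)\subseteq F$ by the regularity rule, hence $\langle a,1^A\rangle\in\Omega_AF$ by the algebraizability description of $\Omega_A$. From this, using that $\Omega_A$ maps $\Fi_\S(A)$ isomorphically onto $\Con_\K(A)$, I would read off relative point regularity ($1^A/\theta$ determines $\theta$); and choosing, for $A\in\Alg(\S)$, a filter $F$ with $\Omega_AF=\mathrm{id}_A$, I would conclude $F=\{1^A\}$, so $\{1^A\}\in\Fi_\S(A)$. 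A routine computation on the matrices $\langle\Fm,\C_\S(\Gamma)\rangle$, using the displayed identity and the completeness of $\S$ with respect to the g-models $\langle A,\Fi_\S(A)\rangle$ for $A\in\Alg(\S)$, then yields $\S=\S_\K^1$.

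\emph{Right to left.} I would assume $\K:=\Alg(\S)$ is a $1$-pointed relatively point regular quasivariety and $\S=\S_\K^1$, take $\tau(x):=\{x\approx 1\}$, and let $\Delta(x,y)$ be the set of all binary terms $d$ with $\K\models d(x,x)\approx 1$. The first task is a Maltsev-type argument showing that $\Delta$ is a set of $1$-regularity terms for $\K$, i.e.\ that for $a,b\in A\in\K$ one has $a=b$ if and only if $d^A(a,b)=1^A$ for all $d\in\Delta$. Working in the relatively free algebra $B$ of $\K$ on $\{x,y\}$, the relative congruence $\theta_0$ generated by $\langle\bar x,\bar y\rangle$ satisfies $1^B/\theta_0=\{d^B(\bar x,\bar y):d\in\Delta\}$ (since $B/\theta_0$ is relatively free on one generator); hence, given the homomorphism $g\colon B\to A$ with $g\bar x=a$, $g\bar y=b$ and $d^A(a,b)=1^A$ for all $d$, one obtains $1^B/\theta_0\subseteq 1^B/\ker g$, so $1^B/(\theta_0\cap\ker g)=1^B/\theta_0$, and relative point regularity forces $\theta_0\subseteq\ker g$, i.e.\ $a=b$. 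With this, each $d(x,x)$ and the constant $1$ are theorems of $\S_\K^1$, and the rules $x,y\vdash_\S\Delta(x,y)$, $x,\Delta(x,y)\vdash_\S y$, $x\vdash_\S d(x,1)$ and $\Delta(x,y),t(x,\bar z)\vdash_\S t(y,\bar z)$ all hold, each by an elementary unwinding of the definition of $\S_\K^1$ through the $1$-regularity property. From these rules I would verify the two algebraizability conditions, $\Omega_AF=\{\langle a,b\rangle:\Delta^A(a,b)\subseteq F\}$ for all $A$ and $F\in\Fi_\S(A)$, and $F=\{1^A\}$ whenever $\Omega_AF=\mathrm{id}_A$; finally, finitariness of $\S$ lets one shrink $\Delta$ to a finite subset, so $\S$ is finitely algebraizable, and the rule $x,y\vdash_\S\Delta(x,y)$ upgrades this to regular algebraizability.

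I expect the hardest part to be the Maltsev-type extraction of the $1$-regularity terms in the right-to-left direction --- this is the step where relative point regularity is genuinely used --- together with the subsequent passage from the possibly infinite set $\Delta$ to a finite one, which has to be coordinated carefully with the finitariness of $\S$, the protoimplication rule, and the symmetry of the finite subset chosen; the left-to-right direction, by contrast, should be routine once the identity realizing each $\S$-filter as the $1^A$-class of its Leibniz congruence is in place.
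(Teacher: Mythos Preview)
The paper does not supply its own proof of this statement: it is quoted from Czelakowski and Pigozzi and used as a black box in the proof of the next theorem, so there is nothing in the paper against which to compare your argument. Your outline follows the original Czelakowski--Pigozzi strategy and is correct in its main lines: the left-to-right direction is routine once the identity $F=1^A/\Omega_AF$ is established for every $\S$-filter $F$, and in the converse direction your Maltsev-type extraction of the $1$-regularity terms---working in the two-generated relatively free $\K$-algebra and applying relative point regularity to $\theta_0$ and $\theta_0\cap\ker g$---is exactly the right mechanism.

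The one step that is underargued is ``finitariness of $\S$ lets one shrink $\Delta$ to a finite subset.'' The Maltsev argument gives you only the infinitary semantic implication: for every $A\in\K$ and every $h$, if $h(x)=1^A$ and $h[\Delta(x,y)]\subseteq\{1^A\}$ then $h(y)=1^A$. But $\vdash_\K^1$ is, by definition, imposed first on \emph{finite} antecedents and then extended by taking finite subsets, so the infinitary statement does not by itself yield $\{x\}\cup\Delta(x,y)\vdash_\K^1 y$, and invoking the finitariness of $\S=\S_\K^1$ at this point is circular. What is actually needed is either the observation that the class $\{\langle A,\{1^A\}\rangle:A\in\K\}$ is closed under ultraproducts (since $\K$ is a quasivariety and $\{1\}$ is equationally definable), which makes its full semantic consequence finitary, or the Cs\'ak\'any--Fichtner characterisation of relatively point regular quasivarieties, which directly furnishes finitely many terms $d_1,\dots,d_n$ with $\K\models d_i(x,x)\approx1$ and the quasi-identity $\bigwedge_i d_i(x,y)\approx1\Rightarrow x\approx y$ valid in $\K$. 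Either route completes the argument; one of them should be made explicit.
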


Let $\langle A,m\rangle$ be a distributive nearlattice. Then, $\theta$ is a congruence on $A$ if and only if (i) if $\langle a,b\rangle,\langle c,d\rangle\in\theta$, then $\langle a\vee c,b\vee d\rangle\in\theta$, and (ii) if $\langle a,b\rangle,\langle c,d\rangle\in\theta$ and $a\wedge c, b\wedge d$ exist in $A$, then $\langle a\wedge c,b\wedge d\rangle\in\theta$, see \cite{Hi80}.

\begin{theorem}\label{theo:S=S_Alg^1}
Let $\S$ be a selfextensional logic with a $\dn$-term and with theorems. If the logic $\S$ is protoalgebraic, then $\S=\S_{\Alg(\S)}^1$ if and only if $\S$ is regularly algebraizable.
\end{theorem}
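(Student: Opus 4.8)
For the implication from right to left I would simply invoke the theorem of Czelakowski and Pigozzi quoted above (\cite{CzPi04}): if $\S$ is regularly algebraizable, then that theorem asserts in particular that $\S=\S_{\Alg(\S)}^1$. So the whole content lies in the converse, and the plan for it is to reduce everything to one algebraic property of the class $\Alg(\S)$ and then verify that property using protoalgebraicity together with the $\dn$-term. First I would collect what the standing hypotheses give: being selfextensional with a $\dn$-term, $\S$ is $\dn$-based (Theorem~\ref{theo:dn-term iff dn-based}), so by Theorem~\ref{theo:AlgS variety} the class $\Alg(\S)=\K_\S$ is a variety and $\S$ is $\dn$-based relative to it; since $\S$ has theorems, $\Alg(\S)$ is a $\dn^1$-based variety, $1$-pointed with constant term any fixed theorem $1$ of $\S$ (so $1^A$ is the greatest element of $\langle A,m^A\rangle$ for each $A\in\Alg(\S)$), and $\S=\S_{\Alg(\S)}$ by Proposition~\ref{prop:K_S_K=K}. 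Hence, by the Czelakowski--Pigozzi theorem and the assumption $\S=\S_{\Alg(\S)}^1$, it is enough to show that the variety $\Alg(\S)$ is \emph{point regular} (for a variety, relative point regularity is just point regularity), i.e.\ that for every $A\in\Alg(\S)$ the map $\theta\mapsto 1^A/\theta$ is injective on $\Con(A)$.

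Next I would bring in protoalgebraicity. Fix a finite set $\Delta(x,y)=\{\delta_1(x,y),\dots,\delta_n(x,y)\}$ of protoimplication formulas for $\S$ (a finite subset of any such set that still satisfies $x,\Delta(x,y)\vdash_\S y$ automatically satisfies $\emptyset\vdash_\S\Delta(x,x)$ too, so finiteness is harmless). Reading the two defining properties of $\Delta$ off through the characterization \eqref{equa:def DN-based} of $\dn$-based logics relative to $\Alg(\S)$, and using part~(2) of Proposition~\ref{prop:properties of m^n} to turn the displayed inequality into an equality, I obtain
\[
\Alg(\S)\models\delta_i(x,x)\approx 1\quad(1\le i\le n),\qquad\Alg(\S)\models m^{n}(x,\delta_1(x,y),\dots,\delta_n(x,y),y)\approx y.
\]

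Now comes the point-regularity argument. Let $A\in\Alg(\S)$ and $\theta,\theta'\in\Con(A)$ with $1^A/\theta=1^A/\theta'$, and take $\langle a,b\rangle\in\theta$. Applying $\delta_i$ with its first argument frozen at $a$ gives $\langle\delta_i^A(a,b),\delta_i^A(a,a)\rangle\in\theta$, and $\delta_i^A(a,a)=1^A$, so $\delta_i^A(a,b)\in 1^A/\theta=1^A/\theta'$; thus $\langle\delta_i^A(a,b),1^A\rangle\in\theta'$ for each $i$. Substituting these $\theta'$-equivalents into the second identity (whose left-hand instance at $x\mapsto a$, $y\mapsto b$ equals $b$) yields $b\equiv m^{n}(a,1^A,\dots,1^A,b)\pmod{\theta'}$, and by part~(1) of Proposition~\ref{prop:properties of m^n} together with $1^A$ being the top of $\langle A,m^A\rangle$ one computes $m^{n}(a,1^A,\dots,1^A,b)=(a\vee b)\wedge_b 1^A\wedge_b\cdots\wedge_b 1^A=a\vee b$, so $b\equiv a\vee b\pmod{\theta'}$. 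Running the same computation with the roles of $a$ and $b$ exchanged (legitimate since $\langle b,a\rangle\in\theta$ as well) gives $a\equiv b\vee a=a\vee b\pmod{\theta'}$. Hence $a\equiv b\pmod{\theta'}$, that is $\theta\subseteq\theta'$; the hypothesis being symmetric in $\theta$ and $\theta'$, we get $\theta=\theta'$. Thus $\Alg(\S)$ is point regular, and the Czelakowski--Pigozzi theorem delivers that $\S$ is regularly algebraizable.

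The step I expect to require the most thought is the reduction in the second paragraph: recognizing that the two conditions defining a set of protoimplication formulas, once interpreted in the $\dn$-based variety $\Alg(\S)$ and combined with the term $m^{n}$, give exactly the equations needed to force point regularity; this is where protoalgebraicity is genuinely used. After that, the verification is a routine two-sided congruence chase, the only computation being $m^{n}(a,1^A,\dots,1^A,b)=a\vee b$ from part~(1) of Proposition~\ref{prop:properties of m^n}, and it also covers the degenerate case $n=0$ (where $\Alg(\S)$ consists of one-element algebras and the statement is trivial).
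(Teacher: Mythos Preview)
Your proposal is correct and follows essentially the same approach as the paper: both directions rely on the Czelakowski--Pigozzi theorem, and for the nontrivial direction both prove that $\Alg(\S)$ is point regular by using a finite set of protoimplication formulas $\Delta$ to show $b\equiv a\vee b\pmod{\theta'}$ (and symmetrically $a\equiv a\vee b$) whenever $\langle a,b\rangle\in\theta$ and $1^A/\theta=1^A/\theta'$. The only difference is cosmetic: you package $x,\Delta(x,y)\vdash_\S y$ as the identity $m^{n}(x,\delta_1(x,y),\dots,\delta_n(x,y),y)\approx y$ in $\Alg(\S)$ and then substitute the $\theta'$-equivalents $1^A$ for the $\delta_i^A(a,b)$ directly into $m^n$, whereas the paper first joins with $b$, passes to $\theta'$, and then takes the meets in $[b)$ step by step before identifying the result with $m^n(\varphi_1^A(a,b),\dots,\varphi_n^A(a,b),a,b)=b$; both computations land on $\langle a\vee b,b\rangle\in\theta'$.
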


\begin{proof}
Let $\S$ be a selfextensional logic with a $\dn$-term and with theorems, and assume that $\S$ is protoalgebraic. Recall that $\S=\S_{\Alg(\S)}$, and by Proposition \ref{prop:S1 is an extension} we have that $\S\subseteq\S_{\Alg(\S)}^1$. If $\S$ is regularly algebraizable, then by the previous theorem we have that $\S=\S_{\Alg(\S)}^1$. 

Now, conversely, assume that $\S=\S_{\Alg(\S)}^1$. We already know that $\Alg(\S)$ is 1-pointed. To show that the logic $\S$ is regularly algebraizable we only need to prove, by the previous theorem, that the variety $\Alg(\S)$ is point regular. Let $A\in\Alg(\S)$ and let $\theta,\theta'\in\Con(A)$ be such that $1^A/\theta=1^A/\theta'$. As $\S$ is finitary and protoalgebraic, let $\Delta(x,y)$ be a finite set of protoimplication formulas for $\S$. We set $\Delta(x,y)=\{\ph_1(x,y),\dots,\ph_n(x,y)\}$. Let $\langle a,b\rangle\in\theta$. So, 
\begin{equation}\label{equa:aux 4}
\{\langle\ph^A(a,a),\ph^A(a,b)\rangle: \ph(x,y)\in\Delta(x,y)\}\subseteq\theta.
\end{equation}
Since $\vdash_\S\Delta(x,x)$, it follows that $\{\langle 1^A,\ph^A(a,b)\rangle: \ph(x,y)\in\Delta(x,y)\}\subseteq\theta$. Thus,  $\{\langle 1^A,\ph^A(a,b)\vee b\rangle: \ph(x,y)\in\Delta(x,y)\}\subseteq\theta$. Then, by hypothesis, $\{\langle 1^A,\ph^A(a,b)\vee b\rangle: \ph(x,y)\in\Delta(x,y)\}\subseteq\theta'$. It follows that
\[
\langle 1^A,(\ph^A_1(a,b)\vee b)\wedge\dots\wedge(\ph^A_n(a,b)\vee b)\rangle\in\theta'.
\]
Then, we obtain 
\[
\langle a\vee b,(\ph^A_1(a,b)\vee b)\wedge\dots\wedge(\ph^A_n(a,b)\vee b)\wedge(a\vee b)\rangle\in\theta'.
\]
Notice that 
\[
(\ph^A_1(a,b)\vee b)\wedge\dots\wedge(\ph^A_n(a,b)\vee b)\wedge(a\vee b)=m^n(\ph^A_1(a,b),\dots,\ph^A_n(a,b),a,b).
\]
Thus,
\begin{equation}\label{equa:aux 5}
\langle a\vee b, m^n(\ph^A_1(a,b),\dots,\ph^A_n(a,b),a,b)\rangle\in\theta'.
\end{equation}
Now, since $x,\Delta(x,y)\vdash_\S y$, it follows by property (A4) that 
\[
m^n(\ph_1(x,y),\dots,\ph_n(x,y),x,y)\vdash_\S y.
\]
Let $h\in\Hom(\Fm,A)$ be such that $h(x)=a$ and $h(y)=b$. Then, we obtain that
\[
m^n(\ph^A_1(a,b),\dots,\ph^A_n(a,b),a,b)=b.
\]
Thus, by \eqref{equa:aux 5}, we have $\langle a\vee b,b\rangle\in\theta'$. With a similar argumentation we can get $\langle a\vee b,a\rangle\in\theta'$. Hence, $\langle a,b\rangle\in\theta'$. We have proved that $\theta\subseteq\theta'$. Similarly, we have $\theta'\subseteq\theta$. Then $\theta=\theta'$. Hence, $\Alg(\S)$ is point regular. Therefore, since $\Alg(\S)$ is 1-pointed and point regular and since $\S=\S_{\Alg(\S)}^1$, it follows by the previous theorem that $\S$ is regularly algebraizable.
\end{proof}

\section*{Conclusions}

Given an algebraic language $\Ll$ with a ternary term $m$, we have defined when $m$ is a distributive nearlattice term ($\dn$-term) for a sentential logic $\S$ (see Definition \ref{def:dn-term}). The term $m$ is a $\dn$-term for $\S$ if it satisfies some syntactical properties ((A1)--(A4)); roughly speaking, these properties (A1)-(A4) mean that when $m$ is interpreted in every algebra $A$ of the algebraic counterpart of $\S$ the $\{m\}$-reduct $\langle A,m^A\rangle$ is a distributive nearlattice.

Then, we characterised the selfextensional logics with a $\dn$-term $m$ as those for which the consequence relation can be defined by the order induced by the ternary term $m$ interpreted in the algebras of the algebraic counterpart of the logic (see Theorems \ref{theo:dn-term iff dn-based} and \ref{theo:AlgS variety}).

Given a $\dn$-based variety $\K$, we define the logic $\S_\K$ (see on page \pageref{def:logic S_K}); this logic is selfextensional with a $\dn$-term, and $\K=\Alg(\S_\K)$. Since the algebras of $\K$ have associated a partial order, it can be defined the logic preserving degrees of truth $\Sl$ (see Definition \ref{def:S_K^<}). We have shown some properties of the logic $\Sl$. We found some sufficient conditions for logics $\S_\K$ and $\Sl$ coincide (see Propositions \ref{prop:(A4) implies S_K=Sl} and \ref{prop:FIF distr implies S_K=Sl}).
 
If $\S$ is a selfextensional logic with a $\dn$-term and with theorems, then the algebraic counterpart $\Alg(\S)$ of $\S$ is 1-pointed. Thus, we can define the truth-preserving logic $\S_{\Alg(\S)}^1$ associated with $\Alg(\S)$. We also have found some sufficient conditions for logics $\S$ and $\S_{\Alg(\S)}^1$ coincide (see Proposition \ref{prop:S_K=S_K^1} and Theorem \ref{theo:S=S_Alg^1}).

Selfextensional logics having a DN-term (or equivalently, DN-based logics) include those sentential logics defined by varieties of distributive lattices. More precisely, consider an algebraic language $\Ll$ with two binary terms $\wedge$ and $\vee$ and let $\K$ be a variety of type $\Ll$ such that for each $A\in\K$, $\langle A,\wedge^A,\vee^A\rangle$ is a distributive lattice. Then, it is clear that $\K$ is, in particular, a DN-based variety. On the one hand, since the algebras of $\K$ have a conjunction, it can be defined in a natural way a logic $\S_\K'$ as in \cite[pp. 79]{Ja06} which is the semilattice-based logic associated with $\K$. On the other hand, since $\K$ is a DN-based variety, we can consider the DN-based logic $\S_\K$ associated with $\K$ (see \eqref{equa:def S_K-1} and \eqref{equa:def S_K-2}). Then, it is straightforward to check that the two logics $\S_\K'$ and $\S_\K$ coincide.

Let $\K$ be a $\dn^1$-based variety (see Subsection \ref{subsec:truth-preserving logic}). Since distributive nearlattices can be considered as  generalisations both of Tarski algebras and of distributive lattices, it will be important to carry out studies on the logics $\S_\K$, $\Sl$ and $\S_\K^1$ under the point of view of AAL. In particular, we will study how these logics behave concerning the classifications in the hierarchy of Leibniz and the hierarchy of Frege. These task will be pursued elsewhere. The papers \cite{Ja12,FoJa01,BoEsFoGiGoToVe09,Fo11} will be of great help to carry out these investigations.


\end{document}